\documentclass[a4paper,12pt]{amsart}
\usepackage[utf8]{inputenc}
\usepackage{amsmath}
\usepackage{amsfonts}
\usepackage{amsthm}
\usepackage{amssymb}
\usepackage[mathscr]{euscript}
\usepackage{xcolor}
\usepackage{enumerate, enumitem}
\usepackage{tikz}
\usepackage{mathtools}
\usepackage{comment}

\usepackage[draft=false,hidelinks]{hyperref}
\usepackage[noabbrev, capitalise, nameinlink]{cleveref} 

\usepackage{scalerel,stackengine}
\stackMath
\newcommand\reallywidehat[1]{%
\savestack{\tmpbox}{\stretchto{%
  \scaleto{%
    \scalerel*[\widthof{\ensuremath{#1}}]{\kern-.6pt\bigwedge\kern-.6pt}%
    {\rule[-\textheight/2]{1ex}{\textheight}}
  }{\textheight}%
}{0.5ex}}%
\stackon[1pt]{#1}{\tmpbox}%
}

\usepackage[
margin=1in,
marginpar=2cm,
includefoot,
footskip=30pt,
]{geometry}

\newtheorem{theorem}[equation]{Theorem}
\newtheorem{lemma}[equation]{Lemma}
\newtheorem{proposition}[equation]{Proposition}
\newtheorem{corollary}[equation]{Corollary}

\theoremstyle{definition}
\newtheorem{definition}[equation]{Definition}

\theoremstyle{remark}
\newtheorem{example}[equation]{Example}
\newtheorem{remark}[equation]{Remark}

\numberwithin{equation}{section}

\newcommand{\osf}{{\normalfont \textsf{X}}}

\newcommand{\lang}{\CL_{\osf}}
\newcommand{\algshift}{\CA_K(\osf)}
\newcommand{\ualgshift}{\TCA_K(\osf)}

\newcommand{\alf}{\mathscr{A}}

\newcommand{\N}{\mathbb{N}}

\newcommand{\id}{\mathrm{id}}
\newcommand{\CA}{\mathcal{A}}

\newcommand{\CL}{\mathcal{L}}

\newcommand{\TCA}{\widetilde{\CA}}
\newcommand{\TCB}{\mathcal{U}}

\newcommand{\HTCB}{\widehat{\TCB}}

\newcommand{\hsig}{\widehat{\sigma}}

\newcommand{\varphih}{\widehat{\varphi}}

\newcommand{\nn}{\mathbb{N}}
\newcommand{\zn}{\mathbb{Z}}

\newcommand{\scj}{\subseteq}

\newcommand{\dom}{\operatorname{dom}}
\newcommand{\supp}{\operatorname{supp}}

\title[Graded locally finite and just infinite Steinberg algebras]{Graded locally finite and just infinite Steinberg algebras}
\author[G. Boava]{Giuliano Boava}
\author[G.G. de Castro]{Gilles G. de Castro}
\author[D. Gonçalves]{Daniel Gonçalves}
\address[Giuliano Boava, Gilles G. de Castro and Daniel Gonçalves]{Departamento de Matem\'atica, Universidade Federal de Santa Catarina, 88040-970 Florian\'opolis SC, Brazil. }
\email{g.boava@ufsc.br \\ gilles.castro@ufsc.br \\ daemig@gmail.com}
\author[D.W. van Wyk]{Daniel W. van Wyk}
\address[Daniel van Wyk]{Department of Mathematics, Fairfield University, Fairfield, CT 06824 USA.}
\email{dvanwyk@fairfield.edu }

\keywords{Steinberg algebras, ample Hausdorff groupoids, cocycle gradings, graded just infinite algebras, Deaconu--Renault systems, subshift algebras}

\subjclass[2020]{Primary 16W50. Secondary 22A22, 16S88, 37B05, 37B10}

\begin{document}

\begin{abstract}
We study graded locally finite and graded just infinite Steinberg algebras of ample Hausdorff groupoids. For gradings by discrete groups induced by a cocycle, we characterize finite-dimensional homogeneous components in terms of finite cocycle fibres. Moreover, we obtain general criteria ensuring that, once one homogeneous component is infinite-dimensional, all of them are. Under suitable isotropy hypotheses, we show that for locally finite Steinberg algebras all irreducible representations act by finite rank operators. We also obtain necessary conditions for Steinberg algebras to be just infinite. In addition, we give a complete characterization of graded just infinite Steinberg algebras in terms of finite invariant reductions and identify conditions under which graded just infiniteness is equivalent to graded simplicity. 

We apply these results to Steinberg algebras of Deaconu--Renault groupoids and to subshift algebras. For Deaconu--Renault groupoids, we obtain dynamical criteria for graded just infiniteness. For subshift algebras, we show that there are no nontrivial finite-dimensional examples, we characterize when all homogeneous components are infinite-dimensional, and, in the finite-alphabet case, we prove that graded just infiniteness, graded simplicity, minimality of the associated groupoid, and hyper-cofinality of the subshift are equivalent.

\end{abstract}

\maketitle

\section{Introduction}

Steinberg algebras of ample Hausdorff groupoids provide a common algebraic framework for a wide range of
structures arising in algebra, dynamics, and operator algebras, including Leavitt path algebras,
inverse semigroup algebras, algebras of Boolean dynamical systems, and algebras associated to
Deaconu--Renault systems and subshifts. One of the main strengths of this approach is that many
ring-theoretic properties of a Steinberg algebra can be interpreted directly in terms of the orbit
structure, isotropy, and invariant subsets of the underlying groupoid. In this paper we develop this
interaction in the setting of graded local finiteness, graded just infiniteness, and graded simplicity.

Just infinite algebras and their graded counterparts have long played an important role in ring theory.
They appear in the study of rigidity and trichotomy phenomena for associative algebras
\cite{FarkasSmall2002,FarinaPendergrassRice2007}, in branch and self-similar constructions such as
Bartholdi's branch rings and tree enveloping rings \cite{Bartholdi2006}, and in operator-algebraic
settings through the theory of just-infinite \(\mathrm C^*\)-algebras \cite{GrigorchukMusatRordam2018}.
In graded situations, the notion of \emph{graded just infinite} is especially natural, since graded ideals
are often accessible through the underlying dynamics. A fundamental motivating example is the theorem of
Abrams, Aranda Pino, and Siles Molina showing that, for Leavitt path algebras with their standard
\(\mathbb Z\)-grading, graded just infiniteness is equivalent to graded simplicity \cite{APMlocal}.
One of our goals is to place this phenomenon into a general groupoid framework.

We begin by studying graded locally finite Steinberg algebras. A continuous cocycle
$c$ from a groupoid $G$ into a discrete group $H$ induces a grading $\{A_K(G)_h\}_{h\in H}$ on its Steinberg algebra $A_K(G)$. We characterize when the homogeneous components of
\(A_K(G)\) are finite-dimensional, purely in terms of the cocycle fibres. More precisely, we show that
\(A_K(G)\) is locally finite if and only if each fibre \(c^{-1}(h)\) is finite
(Theorem~\ref{Loc_finite_Steinb}). This leads to strong restrictions on the unit space and orbit
structure of \(G\), and yields representation theoretic consequences such as the algebra being CCR under additional isotropy hypotheses (Proposition~\ref{LF_CCR}). We also prove general transfer results showing that, under natural groupoid hypotheses, the infinite-dimensionality of one homogeneous component forces the infinite-dimensionality of all homogeneous components.

The second main theme of the paper is graded just infinite Steinberg algebras. Using the graded ideal
correspondence of Clark, Exel, and Pardo \cite{CEP}, we show that for ample Hausdorff groupoids whose
degree-zero fibre is strongly effective, a Steinberg algebra \(A_K(G)\) is graded just infinite
if and only if every proper closed invariant reduction \(G|_W\) is finite 
(Theorem~\ref{graded_justinf_steinb}). This gives a purely dynamical characterization of graded just
infiniteness. We then identify a broad topological condition, Property~(FIC), under which graded just
infiniteness is equivalent to graded simplicity (Theorem~\ref{thm:FIC-justinf-simple}). Along the way,
we also give a general criterion for when the identity fibre \(c^{-1}(e)\) is principal
(Proposition~\ref{prop:principal.fiber}), which is especially useful in the Deaconu--Renault setting.

We next turn to applications. For Deaconu--Renault systems we obtain criteria for finite-dimensionality,
local finiteness, the categorical and local Noetherian and Artinian properties, and related finiteness
conditions on the associated Steinberg algebras. We then specialize the general characterization of graded
just infiniteness to Deaconu--Renault groupoids, obtaining a dynamical criterion in terms of proper closed invariant subsets and their restricted dynamics
(Theorem~\ref{thm:graded-justinf-DR}). Under a natural topological hypothesis on points outside the domain of the local homeomorphism, we further prove that graded just infiniteness, graded simplicity, and minimality are equivalent for Deaconu--Renault groupoids (Theorem~\ref{DR_JI_Simple}).

The graph case is recovered as a special instance of this Deaconu--Renault picture. In particular, for the
boundary path groupoid of a row-finite graph, our general results recover the equivalence between graded
just infiniteness, graded simplicity, and cofinality, and in the locally finite case we also recover just
infiniteness \cite{APMlocal}. Thus the classical picture for Leavitt path algebras fits naturally inside
the broader groupoid-theoretic framework developed here.

We also apply our results to subshift algebras. In this setting the groupoid model is built from the Stone
space of the Boolean algebra generated by the sets \(C(\alpha,\beta)\), linking symbolic dynamics to
Steinberg algebras and, via the associated groupoid, to groupoid \(\mathrm C^*\)-algebras. This circle of
ideas has already proved fruitful in several directions: subshift algebras have been realized as partial
group algebras with relations and used to describe dynamical structure and simplicity
\cite{BoavaCastroGoncalvesVanWyk2025CCM}; their socle has been related to subshift conjugacy
\cite{GoncalvesRoyer2024Socle}; and on the operator-algebraic side, \(\mathrm C^*\)-algebras associated
to one-sided subshifts over arbitrary alphabets have been shown to encode substantial dynamical information
\cite{BCGWCStarSubshift}. In the present paper we prove that there are no nontrivial finite-dimensional
subshift algebras, characterize when all homogeneous components are infinite-dimensional, and show that a
subshift algebra is locally finite if and only if the underlying subshift is finite. For one-sided
subshifts over finite alphabets, we moreover show that graded just infiniteness, graded simplicity,
minimality of the associated groupoid, and hyper-cofinality of the subshift are all equivalent
(Corollary~\ref{cor:graded-justinf-subshift}). 

Taken together, these results show that graded just infiniteness is not an isolated algebraic phenomenon, but part of a broader dynamical picture that can be formulated at the level of groupoids. The groupoid
approach allows us to recover known results for graph algebras, extend them to Deaconu--Renault systems, and obtain new applications to subshift algebras. More broadly, it provides a unified setting in which graded finiteness properties, ideal structure, and dynamical invariants can be studied simultaneously.

\section{Background}

Throughout, $G$ denotes a locally compact Hausdorff groupoid. The set $G^{(0)}=\{\gamma\gamma^{-1}: \gamma\in G\}$, with the relative topology from $G$, denotes the  \emph{unit space} of $G$. Let $r,s: G\to G^{(0)}$ denote the range map, given by $r(\gamma)=\gamma\gamma^{-1}$, and the source map, given by $s(\gamma)=\gamma^{-1}\gamma$. A \emph{bisection} in $G$ is a set $U\subseteq G$ such that $r|_{U}$ and $s|_U$ are bijections onto $r(U)$ ans $s(U)$, respectively. We call $G$ an \emph{ample groupoid} if it has a basis of compact open bisections for its topology. We work exclusively with locally compact Hausdorff and ample groupoids in this work, which we refer to as ample groupoids.  
A set $U\subseteq G^{(0)}$ is \emph{invariant} if $r(s^{-1}(U))=U$. If $U\subseteq G^{(0)}$ is  invariant and either open or closed, then the restriction $G|_U:=s^{-1}(U)\cap r^{-1}(U)$ is an ample subgroupoid of $G$. The \emph{orbit} of $u\in G^{(0)}$ is the set $\mathcal{O}_u:=\{v\in G^{(0)} : \exists \gamma\in G \text{ such that } s(\gamma)=u, r(\gamma)=v\}$.  The relation $u\sim v \leftrightarrow v\in \mathcal O_u$ is an equivalence relation on $G^{(0)}$. The \emph{orbit space} is $G/G^{(0)} := \{\mathcal O_u: u\in G^{(0)}\}$, the set of all equivalence classes, with the quotient topology. A groupoid $G$ is transitive if $|G/G^{(0)}|=1$. The \emph{isotropy group} at  $u\in G^{(0)}$ is the group $G_u^u:=\{\gamma\in G: r(\gamma)=s(\gamma)=u\}$ with unit $u$.  The \emph{isotropy} bundle of $G$ is $\text{Iso}(G)\;:=\;\{g\in G:\ r(g)=s(g)\},$ and we write $\text{Iso}(G)^\circ$ for its interior.
We say that $G$ is \emph{effective} if $\text{Iso}(G)^\circ \;=\; G^{(0)}.$ We say that $G$ is \emph{strongly effective} if for every closed invariant
subset $C\subseteq G^{(0)}$, the restriction $G|_C$ is effective.

Let $G$ be an ample groupoid and let $K$ be a field. Then 
\[A_K(G) := \mathrm{span}_{K}\{1_U: U\subseteq G \text{ is a compact-open bisection} \}, \]
denotes the Steinberg algebra associated with $G$, where multiplication is convolution. On generators convolution is given by
\[ 1_{U}\ast 1_{V} := 1_{UV}, \]
where $UV=\{\gamma\eta \in G: \gamma\in U, \eta\in V\}$. When the field $K$ 
is clear from context or the results hold for all fields, we simply write $A(G)$. Let $c:G\to H$ be a continuous cocycle into a discrete group $H$ with identity $e$. Then $c$ induces an $H$-grading $A(G) = \oplus_{h\in H} A(G)_{h}$, where the homogeneous component at $h\in H$ is given by 
\[A(G)_h = \mathrm{span}_{K} \{1_U: U\subseteq c^{-1}(h) \text{ is a compact-open bisection} \}.\] 
In this paper we focus on graded Steinberg algebras. Conversely, gradings on Steinberg algebras are always induced by continuous cocycles into discrete groups (\cite{AraHazratLiSims2018}).

\section{Graded locally finite Steinberg algebras}
Fix an ample groupoid $G$, a field $K$, a discrete group $H$ with identity $e$, and a continuous cocycle $c:G\to H$. In this section we study and characterize when $A(G)$ is locally finite. We give a full characterization in Theorem~\ref{Loc_finite_Steinb}. Before that, however, we compare Steinberg algebras to the special case of Leavitt path algebras $L_K(E)$ of a row-finite directed graph $E$ with the canonical $\mathbb{Z}$-grading.

\begin{definition}
    A graded algebra $A=\bigoplus_{h\in H}$, where $H$ is a discrete group, is said to be \emph{graded locally finite} if each homogeneous component $A_h$ is a finite dimensional $K$-vector space.
\end{definition}

When there is no confusion about the grading involved, or it has been fixed, we drop the adjective ``graded'' and simply say $A$ is \emph{locally finite}.

To characterize locally finite Steinberg algebras, we need conditions for the homogeneous components $A(G)_h, h\in H$ to be finite dimensional $K$-vector spaces. The lemma below gives conditions that ensure whether a subset of the generators $\{1_{U}: U\subseteq c^{-1}(h)\}$ of $A(G)_h$ is linearly independent. 

\begin{lemma}\label{lem:LI}
    Let $\{U_j\}_{j=1}^n$ be nonempty, mutually disjoint, and compact-open subsets of $G$. Then $\{1_{U_j}\}_{j=1}^n$ is linearly independent.
\end{lemma}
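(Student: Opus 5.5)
The approach is to evaluate a vanishing linear combination pointwise. Recall that elements of $A_K(G)$ are functions $G\to K$, so equality in $A_K(G)$ means equality of functions. The characteristic function $1_U$ of a compact-open set $U$ lies in $A_K(G)$ even when $U$ is not a bisection. Since $G$ is ample, $U$ is a finite union of compact-open bisections, and because $G$ is Hausdorff these can be refined to a disjoint finite union. So the statement makes sense for arbitrary compact-open $U_j$.

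Suppose $\sum_{j=1}^n \lambda_j 1_{U_j}=0$ with $\lambda_j\in K$. Fix an index $k$. Because $U_k$ is nonempty, we can choose a point $\gamma_k\in U_k$. Because the sets $U_j$ are mutually disjoint, $\gamma_k\notin U_j$ for every $j\neq k$. Evaluating the function $\sum_j\lambda_j 1_{U_j}$ at $\gamma_k$ therefore gives
\[
0=\sum_{j=1}^n \lambda_j 1_{U_j}(\gamma_k)=\lambda_k .
\]
As $k$ was arbitrary, all coefficients vanish, and the family $\{1_{U_j}\}_{j=1}^n$ is linearly independent.

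There is no real obstacle here. The only point that needs care is the remark that $A_K(G)$ is realized as a space of $K$-valued functions on $G$, with addition and scalar multiplication defined pointwise, so that evaluation at a point is a linear functional. With that in place, the argument is a single evaluation at a point of each $U_k$.
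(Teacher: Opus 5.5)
Your proof is correct and follows the paper's argument: both pick a point in each $U_k$, evaluate the vanishing combination there, and use disjointness to conclude $\lambda_k=0$. Your remark that $1_U$ lies in $A_K(G)$ for compact-open $U$ that are not bisections is a small extra the paper omits. It is harmless, since linear independence already holds in the space of all $K$-valued functions on $G$.
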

    \begin{proof} 
        
        Suppose that $\{U_j\}_{j=1}^n$ are mutually disjoint and that $a_11_{U_1}+\cdots +a_n1_{U_n} = 0$. If $\gamma\in G$, then $\gamma$ belongs to at most one $U_j$. Choose any $\gamma\in U_1$. Then 
        \[0 = (a_11_{U_1}+\cdots + a_n1_{U_n})(\gamma) = a_1.\]
        It follows similarly, that $a_j =0$ for $j=2,3,\ldots, n$. Hence  $\{1_{U_j}\}_{j=1}^n$ is linearly independent.
    \end{proof}

With Lemma~\ref{lem:LI}, we can characterize when $A(G)_h$ is finite dimensional.

\begin{proposition}\label{finite_fibers}
   Let $h\in H$. Then $A(G)_h$ is finite dimensional if and only if $c^{-1}(h)$ is finite. In this case, $\{1_{\{\gamma\}}\}_{\gamma\in c^{-1}(h)}$ is a basis for $A(G)_h$, and $\dim (A(G)_h)=|c^{-1}(h)|$.
\end{proposition}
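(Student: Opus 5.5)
We prove the two implications separately, using that $c^{-1}(h)$ is clopen in $G$: it is the preimage of a point under a continuous map into a discrete group.

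\emph{($\Leftarrow$) Suppose $c^{-1}(h)$ is finite.} Since $G$ is Hausdorff, every point is closed. As $c^{-1}(h)$ is open and finite, each singleton $\{\gamma\}$ with $\gamma\in c^{-1}(h)$ is open in $G$: it is the complement in $c^{-1}(h)$ of a finite, hence closed, set. Each such singleton is compact, and it is trivially a bisection. So $1_{\{\gamma\}}\in A(G)_h$ for every $\gamma\in c^{-1}(h)$.

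Next, any compact-open bisection $U\subseteq c^{-1}(h)$ is a finite set. Hence $1_U=\sum_{\gamma\in U}1_{\{\gamma\}}$, and the elements $1_{\{\gamma\}}$ span $A(G)_h$. The singletons are nonempty, mutually disjoint and compact-open, so Lemma~\ref{lem:LI} gives linear independence. Therefore they form a basis, and $\dim A(G)_h=|c^{-1}(h)|$.

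\emph{($\Rightarrow$) Suppose $c^{-1}(h)$ is infinite.} We aim to produce, for every $n$, a family of $n$ nonempty, mutually disjoint compact-open bisections inside $c^{-1}(h)$. Lemma~\ref{lem:LI} then gives $\dim A(G)_h\ge n$ for all $n$.

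To construct the family, pick distinct points $\gamma_1,\dots,\gamma_n\in c^{-1}(h)$. Because $G$ is Hausdorff, we can choose pairwise disjoint open neighbourhoods $V_j\ni\gamma_j$. Because $G$ is ample, we can choose compact-open bisections $U_j$ with $\gamma_j\in U_j\subseteq V_j\cap c^{-1}(h)$, which is possible since $c^{-1}(h)$ is open. The $U_j$ are nonempty and pairwise disjoint, as required.

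The only step needing care is the topological one in the forward direction: that singletons in a finite open subset of a Hausdorff space are open. This relies on $c^{-1}(h)$ being open, which is exactly what continuity of $c$ into a discrete group provides. The rest is a direct application of Lemma~\ref{lem:LI}.
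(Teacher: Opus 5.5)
Your proof is correct and follows essentially the same route as the paper: Hausdorff separation plus ampleness and Lemma~\ref{lem:LI} bound $|c^{-1}(h)|$ by the dimension, and openness of singletons in the finite open fibre gives the spanning set. The paper gets linear independence by counting, whereas you cite Lemma~\ref{lem:LI} directly; one small slip is that the singleton-openness step you flag at the end belongs to the ($\Leftarrow$) direction, not the forward one.
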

    \begin{proof}

        Suppose that $A(G)_h$ has finite dimension $n$ and that $|c^{-1}(h)|\geq n+1$. Let $\gamma_1,\ldots,\gamma_{n+1}\in c^{-1}(h)$ be distinct. By the Hausdorff property, there are mutually disjoint open sets $V_1,\ldots,V_{n+1}$ contained in $c^{-1}(h)$ such that $\gamma_i\in V_i$. Moreover, since $G$ is ample, each $\gamma_i$ has a compact-open bisection neighborhood inside $V_i$. Replacing each $V_i$ with this compact-open bisection neighborhood, we may assume that each $V_i$ is a compact-open. By Lemma~\ref{lem:LI}, $\{1_{V_i}\}_{i=1}^{n+1}$ is linearly independent, contradicting the fact that $A(G)_h$ has dimension $n$. This implies that $|c^{-1}(h)|\leq \dim (A(G)_h)$.
        
        Assume now that $c^{-1}(h)$ is finite. Then, since $c^{-1}(h)$ is Hausdorff with the relative topology, it is discrete and singletons are open. Thus, singletons are compact-open in $c^{-1}(h)$. Since $c^{-1}(h)$ is open in $G$, it follows that singleton are compact-open in $G$. Thus, $\{1_{\{\gamma\}}\}_{\gamma\in c^{-1}(h)}$ generates $A(G)_h$. Hence,  $A(G)_h$ is finite dimensional and $|c^{-1}(h)|\geq \dim (A(G)_h)$.
        
        We conclude that $|c^{-1}(h)|= \dim (A(G)_h)$, and that $\{1_{\{\gamma\}}\}_{\gamma\in c^{-1}(h)}$ is a basis for $A(G)_h$ when $A(G)_h$ is finite dimensional.
       
    \end{proof}

For Leavitt path algebras of finite graphs, with the usual $\mathbb{Z}$-grading, if one component is infinite dimensional, then all components are infinite dimensional by \cite[Theorem~1.4.]{APMlocal}. For general gradings on ample groupoids, the following example demonstrates that this need not hold.

\begin{example} \label{matrix.example}
     Consider the groupoid $G=\{1,2\}\times\{1,2\}$ with product given by
$(i,j)(j,k)=(i,k)$ for all $i,j,k\in\{1,2\}$. Define a cocycle
$c:G\to\zn$ by $c(i,j)=i-j$. Then $A(G)\cong M_2(K)$, and the induced
$\zn$-grading has a $2$-dimensional homogeneous component in degree $0$,
$1$-dimensional homogeneous components in degrees $-1$ and $1$, and zero dimensional
homogeneous components in all other degrees.
If we take $G^\infty=\bigsqcup_{n\in\nn}G$, then $A(G^\infty)$ is the infinite
direct sum of copies of $M_2(K)$. Moreover,
\[
c'((n,(i,j)))=c(i,j)
\qquad (n\in\nn,\ i,j\in\{1,2\})
\]
defines a cocycle on $G^\infty$. The grading induced on $A(G^\infty)$ by $c'$
has infinite-dimensional homogeneous components in degrees $0$, $1$, and $-1$,
while all other homogeneous components are zero dimensional.
     
\end{example}

While there is no Steinberg algebra version of \cite[Theorem~1.4.]{APMlocal} in general, we do have the following analog of \cite[Theorem~1.4.]{APMlocal}.

\begin{proposition}\label{prop:inf-dim-transfer-finite-family} Assume that $s(c^{-1}(h))=G^{(0)}$ for every $h\in \operatorname{im}(c)$. If $A(G)_h$ is infinite-dimensional for some $h \in \operatorname{im}(c)$, then $A(G)_g$ is infinite-dimensional for every $g \in \operatorname{im}(c)$.
\end{proposition}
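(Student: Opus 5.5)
By Proposition~\ref{finite_fibers}, the statement is equivalent to a purely set-theoretic one about cocycle fibres. We must show that if $c^{-1}(h)$ is infinite for some $h\in\operatorname{im}(c)$, then $c^{-1}(g)$ is infinite for every $g\in\operatorname{im}(c)$. We will argue by contraposition. Suppose $c^{-1}(g)$ is finite for some $g\in\operatorname{im}(c)$; we will show that $c^{-1}(h)$ is finite for every $h\in\operatorname{im}(c)$.

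\textbf{Step 1: the unit space is finite.} By hypothesis $s(c^{-1}(g))=G^{(0)}$, so $G^{(0)}$ is the image under $s$ of a finite set and hence is finite. Since $G^{(0)}$ is Hausdorff and finite, it is discrete.

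\textbf{Step 2: translate fibres by right multiplication.} Fix $h\in\operatorname{im}(c)$. For each $u\in G^{(0)}=s(c^{-1}(h^{-1}g))$ (this uses the hypothesis applied to $h^{-1}g$, so we need $h^{-1}g\in\operatorname{im}(c)$), choose $\eta_u\in c^{-1}(h^{-1}g)$ with $r(\eta_u)=u$. Such a choice is available after replacing $\eta$ by $\eta^{-1}$ and using $c(\eta^{-1})=c(\eta)^{-1}$, so it is cleaner to choose $\eta_u\in c^{-1}(g^{-1}h)$ with $s(\eta_u)=u$ and then invert. Define
\[
\Phi\colon c^{-1}(h)\to c^{-1}(g),\qquad \Phi(\gamma)=\gamma\,\eta_{s(\gamma)},
\]
where $\eta_{s(\gamma)}$ is chosen with $r(\eta_{s(\gamma)})=s(\gamma)$ and $c(\eta_{s(\gamma)})=h^{-1}g$, so that $c(\Phi(\gamma))=h\,h^{-1}g=g$. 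The map $\Phi$ is injective: if $\gamma\eta_u=\gamma'\eta_{u'}$, then comparing sources gives $s(\eta_u)=s(\eta_{u'})$. That alone does not force $u=u'$, so we instead partition $c^{-1}(h)$ according to $s(\gamma)$. On each piece $\{\gamma\in c^{-1}(h): s(\gamma)=u\}$ the map $\gamma\mapsto\gamma\eta_u$ is injective by cancellation. Hence
\[
|c^{-1}(h)|\le |G^{(0)}|\cdot|c^{-1}(g)|<\infty.
\]

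\textbf{Main obstacle.} The delicate point is ensuring that the required connecting elements exist, which needs $h^{-1}g$ (or $g^{-1}h$) to lie in $\operatorname{im}(c)$. The hypothesis only controls fibres over $\operatorname{im}(c)$, and $\operatorname{im}(c)$ need not be a subgroup. I will try first to avoid this by connecting through the identity fibre. The unit space lies in $c^{-1}(e)$, so $e\in\operatorname{im}(c)$ and $G^{(0)}\subseteq c^{-1}(e)$. For any $h\in\operatorname{im}(c)$ and each $u$, choose $\eta_u\in c^{-1}(h)$ with $s(\eta_u)=u$. Then $\gamma\mapsto \gamma\eta_{s(\gamma)}^{-1}$, with $s(\gamma)=u$, sends $c^{-1}(h)$ into $c^{-1}(e)$ injectively on each source piece. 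Symmetrically, $\delta\mapsto\delta\eta_{s(\delta)}$, applied where the ranges match (using $r(\eta_u^{-1})=u$), sends $c^{-1}(e)$ into $c^{-1}(h)$ injectively on each piece. This gives the two bounds
\[
|c^{-1}(h)|\le|G^{(0)}|\,|c^{-1}(e)|,\qquad |c^{-1}(e)|\le|G^{(0)}|\,|c^{-1}(h)|,
\]
using only $h\in\operatorname{im}(c)$. Finiteness of $G^{(0)}$ comes from Step 1. Chaining the two bounds through $e$ for $g$ and then for $h$ gives the contrapositive, and Proposition~\ref{finite_fibers} converts it back into the dimension statement.
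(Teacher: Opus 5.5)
Your argument is correct and is essentially the paper's: a finite fibre forces $G^{(0)}=s(c^{-1}(g))$ to be finite, and fibres are then transported through the identity fibre by multiplying with connecting elements, injectively by cancellation. The paper does this directly, using pigeonhole to find an infinite set $G_u^v\cap c^{-1}(h)$ and the single injection $\gamma\mapsto\eta\gamma_0^{-1}\gamma$ into $c^{-1}(g)$.

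One slip needs fixing. In your ``symmetric'' map, $\delta\eta_{s(\delta)}$ is not composable, because $\eta_u$ was chosen with $s(\eta_u)=u$ rather than $r(\eta_u)=u$. Use $\delta\mapsto\eta_{r(\delta)}\delta$ instead. It lands in $c^{-1}(h)$ and is injective on each set $\{\delta: r(\delta)=u\}$ by left cancellation.
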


\begin{proof}
    Observe that $r(c^{-1}(h))=s(c^{-1}(h^{-1}))$ for all $h\in\operatorname{im}(c)$. Hence, $r(c^{-1}(h))=G^{(0)}$ for every $h\in \operatorname{im}(c)$. Fix $h\in\operatorname{im}(c)$ and assume $A(G)_h$ is infinite-dimensional. By Proposition~\ref{finite_fibers}, $c^{-1}(h)$ is infinite, and by the same proposition, it is enough to show that $c^{-1}(g)$ is infinite for every $g\in\operatorname{im}(c)$. Let $g\in\operatorname{im}(c)$. In the case that $G^{(0)}$ is infinite, since $s(c^{-1}(g))=G^{(0)}$ we have that $c^{-1}(g)$ is infinite. Suppose now that $G^{(0)}$ is finite. Since $c^{-1}(h)$ is infinite, there exist $u,v\in G^{(0)}$ such that $G^v_u\cap c^{-1}(h)$ is infinite. Fix $\gamma_0\in G^v_u\cap c^{-1}(h)$. By hypothesis, there exists $\eta\in c^{-1}(g)$ such that $s(\eta)=u$. Thus, $c^{-1}(g)$ contains the following infinite set
    \[\{\eta\gamma_0^{-1}\gamma:\gamma\in G^v_u\cap c^{-1}(h)\},\]
    concluding the proof.
\end{proof}

The next theorem characterizes when $A(G)$ is locally finite and follows immediately from Proposition~\ref{finite_fibers}.

\begin{theorem}\label{Loc_finite_Steinb}
    Let $G$ be an ample Hausdorff groupoid and let $c:G\to H$ be a continuous cocycle into a discrete group $H$. Then $A(G)$ is locally finite for the $H$-grading induced by $c$ if and only if $|c^{-1}(h)|<\infty$ for all $h\in H$.
\end{theorem}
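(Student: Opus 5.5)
The statement is a direct consequence of Proposition~\ref{finite_fibers}. The plan is simply to unwind the definition of local finiteness and apply that proposition one degree at a time.

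By definition, $A(G)$ is locally finite for the grading induced by $c$ exactly when $A(G)_h$ is finite dimensional for every $h\in H$. For a fixed $h\in H$, Proposition~\ref{finite_fibers} states that $A(G)_h$ is finite dimensional if and only if $c^{-1}(h)$ is finite. Quantifying over all $h\in H$ gives both directions at once. If $A(G)$ is locally finite, each $A(G)_h$ is finite dimensional, so each fibre $c^{-1}(h)$ is finite. Conversely, if every fibre is finite, every homogeneous component is finite dimensional.

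Degrees $h\notin\operatorname{im}(c)$ need a brief check, since there $c^{-1}(h)=\emptyset$. In that case the spanning set of $A(G)_h$ contains only $1_\emptyset=0$, so $A(G)_h=0$, consistent with $|c^{-1}(h)|=0$. This is already covered by Proposition~\ref{finite_fibers}, which makes no assumption on $h$.

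There is no real obstacle, because all the topological content sits inside Proposition~\ref{finite_fibers}. That proposition uses the Hausdorff and ample hypotheses to separate distinct points of a fibre by disjoint compact-open bisections, and it uses that a finite fibre is open and discrete. As a by-product, in the locally finite case we also get $\dim A(G)_h=|c^{-1}(h)|$ for every $h\in H$, with basis $\{1_{\{\gamma\}}\}_{\gamma\in c^{-1}(h)}$.
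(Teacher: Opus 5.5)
Your proposal is correct and matches the paper's approach: the paper also obtains the theorem immediately from Proposition~\ref{finite_fibers}, applied to each $h\in H$. Your check for degrees $h\notin\operatorname{im}(c)$ is a harmless extra, since that proposition already covers every $h\in H$.
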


\begin{remark}\label{discrete_unitspace}
    If $A(G)$ is locally finite for an $H$-grading induced by $c$, then $G^{(0)}$ is finite and discrete, since $G^{(0)}\subseteq c^{-1}(e)$. Moreover, $G$ is also discrete. To see this, fix $x\in G$. Then there is an open bisection $U$ containing $x$ such that $r:U\to r(U)$ is homeomorphism (with $r(U)$ open in $G^{(0)}$). Since $G^{(0)}$ is discrete, $\{r(x)\}$ is open in $r(U)$. Then $x=r|_{U}^{-1}(r(x))$ is open in $U$, and hence in $G$. 
\end{remark}

Suppose $u\in G^{(0)}$. Note that if $v\in \mathcal{O}_u$, then $G_u^u \cong G_v^v$. Thus, up to isomorphism, each orbit has one isotropy group.   

\begin{corollary} \label{direct_matrices}
    Suppose that $A(G)$ is locally finite with respect to a grading induced by a cocycle $c:G\to H$. Then, $G$ has finitely many orbits $\mathcal{O}_1, \ldots, \mathcal{O}_k$,  $|\mathcal{O}_i|=n_i<\infty$ for $i=1,\ldots,k$, and
    \[ A(G) = \bigoplus_{i=1}^k M_{n_i}(A(G_{u_i}^{u_i})) \]
    where $G_{u_i}^{u_i}$ is the isotropy group of $u_i\in \mathcal{O}_i$.
\end{corollary}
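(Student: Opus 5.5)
By Remark~\ref{discrete_unitspace}, local finiteness forces $G^{(0)}$ to be finite and $G$ to be discrete. Since $G^{(0)}$ is finite, the orbits partition $G^{(0)}$ into finitely many classes $\mathcal{O}_1,\ldots,\mathcal{O}_k$, each of finite size $n_i$. This gives the first two claims. For each $i$ fix a base point $u_i\in\mathcal{O}_i$.

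Next I will decompose $G$ along the orbits. Each $\mathcal{O}_i$ is invariant, and it is open and closed in the finite discrete space $G^{(0)}$. Hence $G=\bigsqcup_{i=1}^k G|_{\mathcal{O}_i}$ is a disjoint union of clopen subgroupoids, and there are no arrows between distinct orbits. Consequently $A(G)\cong\bigoplus_{i=1}^k A(G|_{\mathcal{O}_i})$ as algebras: functions supported on different pieces multiply to zero, and the indicator functions $1_{G|_{\mathcal{O}_i}}$ are central orthogonal idempotents summing to $1_{G}$. These indicators are legitimate elements of $A(G)$ because, $G$ being discrete, finite sets are compact-open and every finitely supported function lies in $A(G)$.

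It remains to show that for a transitive discrete groupoid $T=G|_{\mathcal{O}}$ with $\mathcal{O}=\{v_1,\ldots,v_n\}$ and $v_1=u$, we have $A(T)\cong M_n(A(T_u^u))$. Since $T$ is discrete, $A(T)$ is the space of finitely supported functions on $T$ under convolution. Note that $T$ itself may be infinite, since the isotropy group may be infinite.

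Using transitivity, choose for each $j$ an arrow $\sigma_j$ with $s(\sigma_j)=u$ and $r(\sigma_j)=v_j$, taking $\sigma_1=u$. Every $\gamma\in T$ with $s(\gamma)=v_l$ and $r(\gamma)=v_j$ factors uniquely as $\gamma=\sigma_j g\sigma_l^{-1}$ with $g=\sigma_j^{-1}\gamma\sigma_l\in T_u^u$. This gives a bijection $T\cong\{1,\ldots,n\}\times T_u^u\times\{1,\ldots,n\}$ that turns the groupoid multiplication into $(j,g,l)(l,g',m)=(j,gg',m)$. In other words, $T$ is isomorphic to the product of the pair groupoid on $n$ points with the group $T_u^u$.

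I then define $\Phi:A(T)\to M_n(A(T_u^u))$ by $\Phi(f)_{jl}(g)=f(\sigma_j g\sigma_l^{-1})$. It is a linear bijection between finitely supported functions, and a direct convolution computation shows it is multiplicative: $(f*f')(\sigma_j g\sigma_m^{-1})$ is a sum over intermediate vertices $l$ and over $h\in T_u^u$, and this is exactly the $(j,m)$ entry of the matrix product.

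I expect the main care to be needed in justifying that $A(G)$ consists of all finitely supported functions, which is where discreteness is used, and in checking that the factorization is unique so that $\Phi$ is well defined. Once those are in place, the isomorphism of Steinberg algebras follows from the groupoid isomorphism by functoriality, and summing over the orbits yields the stated decomposition $A(G)\cong\bigoplus_{i=1}^k M_{n_i}(A(G_{u_i}^{u_i}))$.
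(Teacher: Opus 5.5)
Your proof is correct in substance, but it takes a different route from the paper. The paper's proof is a two-line reduction: local finiteness makes $G^{(0)}\subseteq c^{-1}(e)$ finite, and the matrix decomposition is then quoted from \cite[Proposition~3.1]{Steinberg_ChainConditions_2018}, which treats groupoids with finite unit space. You instead reprove that cited result directly, in three steps:
\begin{itemize}
\item Remark~\ref{discrete_unitspace} makes $G$ discrete, so $A(G)$ is exactly the space of finitely supported functions.
\item The orbits split $G$ into clopen reductions $G|_{\mathcal{O}_i}$.
\item Each transitive discrete piece is the product of the pair groupoid on $n_i$ points with $G_{u_i}^{u_i}$, via the unique factorization $\gamma=\sigma_j g\sigma_l^{-1}$, and your map $\Phi$ is a multiplicative bijection. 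Your factorization and convolution computations are correct.
\end{itemize}
Your version is self-contained and shows exactly where discreteness enters and why $A(G)$ can still be infinite-dimensional (infinite isotropy). The paper's version is shorter but rests entirely on the external reference.

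One justification in your second step is wrong and must be replaced.
\begin{itemize}
\item The functions $1_{G|_{\mathcal{O}_i}}$ are not in general elements of $A(G)$. As you note yourself, $G|_{\mathcal{O}_i}$ may be infinite, so $1_{G|_{\mathcal{O}_i}}$ need not be finitely supported.
\item Even when $T=G|_{\mathcal{O}_i}$ is finite, $1_T$ is not an idempotent under convolution. Indeed $(1_T*1_T)(\gamma)=|r^{-1}(r(\gamma))|=n|T_u^u|$; for the pair groupoid on two points, $1_T*1_T=2\cdot 1_T$.
\item $1_G$ is not the identity of $A(G)$.
\end{itemize}
The correct central orthogonal idempotents are the indicators $1_{\mathcal{O}_i}$ of the orbits, viewed as finite (hence compact open) subsets of $G^{(0)}$. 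They are central because $(1_{\mathcal{O}_i}*f)(\gamma)=1_{\mathcal{O}_i}(r(\gamma))f(\gamma)$ and $(f*1_{\mathcal{O}_i})(\gamma)=f(\gamma)1_{\mathcal{O}_i}(s(\gamma))$, and $r(\gamma)$, $s(\gamma)$ lie in the same orbit. They sum to $1_{G^{(0)}}$, which is the unit of $A(G)$, and $1_{\mathcal{O}_i}*A(G)=A(G|_{\mathcal{O}_i})$. Alternatively, your other remark suffices on its own: each $f$ is the sum of its restrictions to the clopen pieces, and products of functions supported on distinct pieces vanish. Iterating Lemma~\ref{lem:clopen-decomp} also works. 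With this repair the argument is complete.
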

    \begin{proof}
        If $A(G)$ is locally finite, then $G^{(0)}\subseteq c^{-1}(e)$ is finite by Theorem~\ref{Loc_finite_Steinb}. The conclusion now follows from \cite[Proposition~3.1]{Steinberg_ChainConditions_2018}.
    \end{proof}

We say that $A(G)$ is \emph{CCR} if all its irreducible representations over $K$ act by finite rank operators \cite[Proposition~2.7]{CSW_20}. The following result describes a class of locally finite Steinberg algebras that are CCR. This class includes many combinatorial algebras such as Leavitt path algebras of graphs \cite{AbrAraMol} and of labeled graphs \cite{BCGW21}, algebras of subshifts \cite{BCGWSubshift}, and, more generally, Steinberg algebras of Deaconu-Renault groupoids. For this result we need extra assumptions on $G$ to ensure that the cardinality of the field $K$ is large enough. 
\begin{proposition}\label{LF_CCR}
    Suppose that $G$ is a second countable Hausdorff ample groupoid with virtually abelian isotropy, let $H$ be a discrete group, and let $K$ be an algebraically closed field of cardinality greater than $\dim A_K(G)$. If $A_K(G)$ is locally finite for an $H$-grading, then $A_K(G)$ is a CCR algebra. In particular, if $A_\mathbb{C}(G)$ is locally finite then it is CCR.
\end{proposition}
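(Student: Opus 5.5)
Since $A_K(G)$ is locally finite for the $H$-grading, Theorem~\ref{Loc_finite_Steinb}, Remark~\ref{discrete_unitspace} and Corollary~\ref{direct_matrices} apply. Hence $G$ is discrete, $G^{(0)}$ is finite, and
\[
A_K(G)\cong \bigoplus_{i=1}^k M_{n_i}\bigl(K[\Gamma_i]\bigr),
\]
where $\Gamma_i=G_{u_i}^{u_i}$ is the isotropy group at a chosen unit $u_i$ of the $i$-th orbit. Each $\Gamma_i$ is virtually abelian by hypothesis. Since $G$ is discrete and second countable, each $\Gamma_i$ is countable, so $\dim_K K[\Gamma_i]\le\dim A_K(G)<|K|$.

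The plan is to reduce the CCR property to the summands. An irreducible representation of a finite direct sum of algebras factors through a single summand: the central idempotents act as $0$ or $1$ on an irreducible module. The irreducible representations of $M_{n}(R)$ are $V^{n}$ for $V$ an irreducible $R$-module, by Morita equivalence. If $R$ acts on $V$ by finite rank operators, then so does $M_n(R)$ on $V^n$. So it suffices to show that every irreducible representation of $K[\Gamma]$, with $\Gamma$ countable and virtually abelian, is finite dimensional. Finite dimensionality is stronger than finite rank action and implies it.

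For this, let $A\le\Gamma$ be abelian of finite index $m$. Then $K[\Gamma]$ is a free left (and right) $K[A]$-module of rank $m$, generated by coset representatives. Let $V$ be an irreducible $K[\Gamma]$-module, so $V=K[\Gamma]v$ for any $v\ne0$. Then $V$ is finitely generated as a $K[A]$-module. Since $K[A]$ is commutative, a finitely generated $K[A]$-module has a maximal submodule $W$. Take its "core" $W_0=\bigcap_{g}gW$, intersecting over coset representatives, which is a finite intersection. Each $gW$ is again an $A$-submodule because $A$ can be taken normal (replace $A$ by its normal core, still of finite index). So $W_0$ is a proper $\Gamma$-invariant submodule, hence $W_0=0$. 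Therefore $V$ embeds into $\bigoplus_g V/gW$, a finite sum of simple $K[A]$-modules.

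It remains to show that each simple $K[A]$-module is one dimensional. This is the main obstacle, and it is where the cardinality hypothesis enters. A simple module over a commutative $K$-algebra $K[A]$ is a field extension $L=K[A]/\mathfrak m$ of $K$ of dimension at most $\dim K[A]<|K|$. By the Amitsur-type argument, $L$ is algebraic over $K$: otherwise it contains some $t$ transcendental over $K$, and the elements $1/(t-\lambda)$, $\lambda\in K$, are $K$-linearly independent, so $\dim_K L\ge|K|$, a contradiction. Since $K$ is algebraically closed, $L=K$. Hence $\dim V\le m<\infty$.

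For the final claim, $\mathbb{C}$ is algebraically closed and uncountable, while $\dim A_{\mathbb{C}}(G)$ is countable because $G$ is countable (discrete and second countable). So the hypotheses hold for $K=\mathbb{C}$.
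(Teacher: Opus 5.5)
Your argument is correct, but it takes a different route from the paper's proof.

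\textbf{The paper's route.} The paper never uses the matrix decomposition. It cites \cite[Corollary~4.2]{CSW_20} to get that each $KG_u^u$ is CCR: the group is virtually abelian, $K$ is algebraically closed, and $|G_u^u|<|K|$. It then observes that finiteness of $G^{(0)}$ makes every orbit closed. Finally it invokes the groupoid criterion \cite[Theorem~6.1]{CSW_20}: closed orbits plus CCR isotropy give CCR.

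\textbf{Your route.} You use Corollary~\ref{direct_matrices} to reduce to simple modules over $M_{n_i}(K\Gamma_i)$, and hence over $K\Gamma_i$. You then prove by hand that these are finite dimensional. The tools are the finite intersection $\bigcap_g gW$ for a maximal $K[A]$-submodule $W$, with $A$ normal abelian of finite index, and Amitsur's trick for simple modules over the commutative algebra $K[A]$.

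\textbf{Comparison.} The paper's proof is two lines given the literature. Yours is essentially self-contained, apart from the structure result behind Corollary~\ref{direct_matrices}. It also gives a stronger conclusion: every irreducible representation of $A_K(G)$ is finite dimensional, of dimension at most $n_i[\Gamma_i:A_i]$, not merely of finite rank. Finally, you need second countability only to check the cardinality hypothesis for $K=\mathbb{C}$. The inequality $\dim_K K[\Gamma_i]\le \dim A_K(G)$ holds without countability.

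\textbf{Minor slips.} Neither affects validity. After replacing $A$ by its normal core, the bound on $\dim V$ is the index of the core, not the original $m$. Also, a nonzero finitely generated module has a maximal submodule over any unital ring, so commutativity of $K[A]$ plays no role in that step.
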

\begin{proof}
    Since $G_u^u$ is virtually abelian for all $u\in G^{(0)}$, $K$ is an algebraically closed field and all isotropy groups have cardinality less than $K$, it follows from \cite[Corollary~4.2]{CSW_20} that $A(G_u^u)\cong KG_u^u$ is CCR for every $u\in G^{(0)}$. 

    If $A(G)$ is locally finite, then $G^{(0)}\subseteq c^{-1}(e)$ is finite and discrete. Hence, all orbits are closed. Thus, $A(G)$ is CCR by \cite[Theorem~6.1]{CSW_20}. 
\end{proof}

\section{Just infiniteness and graded just infiniteness for Steinberg algebras}

Fix an ample groupoid $G$, a field $K$, and let $c:G\to H$ be a continuous cocycle into a discrete group $H$ with identity $e$. In this section we study when $A(G)$ is just infinite and graded just infinite. We start with an elementary observation that  will be useful later on, followed by definitions pertinent to this section. 

\begin{lemma}\label{finite_dim_Steinb}
    We have that $\dim{A(G)}< \infty$ iff $|G|<\infty$. 
\end{lemma}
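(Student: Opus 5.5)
The plan is to reduce Lemma~\ref{finite_dim_Steinb} to Proposition~\ref{finite_fibers} by using a trivial grading. Take $H$ to be the trivial group $\{e\}$ and let $c:G\to H$ be the constant cocycle. It is continuous and satisfies the cocycle identity. Its only fibre is $c^{-1}(e)=G$, and the homogeneous component $A(G)_e$ is spanned by all $1_U$ with $U\subseteq G$ a compact-open bisection. Hence $A(G)_e=A(G)$.

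Proposition~\ref{finite_fibers}, applied with $h=e$, then says that $A(G)$ is finite dimensional if and only if $c^{-1}(e)=G$ is finite. In that case it also gives $\dim A(G)=|G|$, with basis $\{1_{\{\gamma\}}\}_{\gamma\in G}$.

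The only point to check is that Proposition~\ref{finite_fibers} was proved for an arbitrary discrete group $H$ and an arbitrary continuous cocycle. The trivial group is allowed, and its proof used only the Hausdorff and ample hypotheses on $G$ together with openness of the fibre, which here is all of $G$. So no step should be a real obstacle.

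If one prefers a direct argument, it runs as follows.

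\textbf{Finite dimension forces $G$ finite.} Given $n+1$ distinct points of $G$, separate them by pairwise disjoint open sets using the Hausdorff property. Shrink these to compact-open bisections using that $G$ is ample. Lemma~\ref{lem:LI} then produces $n+1$ linearly independent elements, so $|G|\le \dim A(G)$.

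\textbf{$G$ finite forces finite dimension.} A finite Hausdorff space is discrete, so every function $G\to K$ is a finite combination of the indicators $1_{\{\gamma\}}$. Therefore $\dim A(G)\le |G|$.
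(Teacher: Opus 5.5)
Your proposal is correct and matches the paper's proof. The paper also obtains the forward direction from Proposition~\ref{finite_fibers} applied to the trivial grading. For the converse it notes that a finite Hausdorff groupoid is discrete, so $\{1_{\{\gamma\}}\}_{\gamma\in G}$ is a basis of $A(G)$.
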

    \begin{proof}
        Suppose $\dim{A(G)}< \infty$.  Then consider the trivial grading and Proposition~\ref{finite_fibers}. 

        Conversely, if $G$ is Hausdorff and $|G|<\infty$, then $G$ is discrete. Hence $\{1_{\{\gamma\}}\}_{\gamma\in G}$ forms a basis for $A(G)$.
    \end{proof}

\begin{definition}
    If $A=\bigoplus_{h}A_h$ is an $H$-graded algebra, then an ideal $I\subseteq A$ is a \emph{graded ideal} if $I=\bigoplus_{h}(A_h\cap I)$.
\end{definition}

\begin{definition}
A $K$-algebra $A$ is \emph{just infinite} if $\dim{A}=\infty$, and $\dim{A/I}<\infty$ for every non-zero ideal $I\subseteq A$. If $A$ is a graded algebra, then $A$ is \emph{graded just infinite} if $\dim{A}=\infty$, and  $\dim{A/J}<\infty$ for every non-zero graded ideal $J\subseteq A$.
\end{definition}

\begin{remark}
    Example 2.1 in \cite{AbrAraMol} shows that a graded just infinite Steinberg algebras need not be just infinite.   
\end{remark}

Let $U\subseteq G^{(0)}$ be an open invariant subset. Then $A(G|_U)$ is an ideal in $A(G)$ by \cite[Proposition~2.9]{CEP}. Let $W=G^{(0)}\setminus U$. Then $W$ is a closed invariant subset of $G^{(0)}$, and thus $G|_W$ is an ample Hausdorff groupoid. Hence we have the short exact sequence
\[ 0\to A(G|_U) \overset{\iota}{\to} A(G) \overset{\pi}{\to} A(G|_W)\to 0.\] 
This fact is likely well-known, but we provide an outline for completeness. If $U\subseteq G^{(0)}$ is an open invariant subset, then we have an inclusion $A(G|_U) \hookrightarrow A(G)$ and, moreover, $A(G|_U) = A(G)A(U)=A(U)A(G)$ is an ideal in $A(G)$ generated by $A(U)$ \cite[Proposition~2.9]{CEP}. The map $\pi : A(G) \to A(G|_W)$ is given by restriction. To see that it is onto, suppose $1_{\hat{V}}\in A(G|_W)$ with $\hat{V}\subseteq G|_W$ a compact open bisection. Then, $\hat{V} = V\cap G|_W$ for some compact open set $V\in G$. Thus, $1_{\hat{V}} = 1_V|_{G|_W}=\pi(1_V)$. To see that $\ker{\pi}=A(G|_U)$, notice that $G=G|_U\sqcup G|_W$. Thus, if $f\in A(G)$ is nonzero and $f|_{G|_W}\equiv 0$, then  $\supp{f}\subseteq G|_U$. Hence, $f\in A(G|_U)$. If $g\in A(G|_U)$, then $\supp{g}\subseteq G|_U$, and so $g(\gamma)=0$ for all $\gamma\in G|_W$. Hence, we have that 
\[ A(G)/A(G|_U) \cong A(G|_W).\]

We now introduce a class of ample groupoids that shares many features with the Deaconu--Renault groupoids arising from directed graphs.

\begin{definition}
\label{def:FIC}
We say that $G$ satisfies \emph{Property (FIC)}--finite, invariant, and closed--if every nonempty, invariant subset $W\subseteq G^{(0)}$ with $|G|_W|<\infty$ is clopen in $G^{(0)}$.
\end{definition}

\begin{example}\label{ex:LF.implies.FIC}
    If $G$ is a groupoid such that $A(G)$ is locally finite for some $H$ grading, then, by Remark~\ref{discrete_unitspace}, $G^{(0)}$ is finite, so every $W\subseteq G^{(0)}$ is clopen. Hence, $G$ satisfies Property (FIC).
\end{example}

The following lemma shows that when $G^{(0)}$ can be decomposed into clopen invariant subsets, then $A(G)$ may be decomposed into a direct sum of graded ideals. 

\begin{lemma}[Clopen invariant decomposition]
\label{lem:clopen-decomp}
Let $G$ be an ample, Hausdorff, \'etale groupoid and let $U,W\subseteq G^{(0)}$ be
disjoint clopen invariant subsets with $G^{(0)}=U \sqcup W$.
Then $A(G|_U)$ and $A(G|_W)$ are graded ideals of $A(G)$ (for any grading coming from
a cocycle), and the map
\[
A(G|_U)\oplus A(G|_W)\ \longrightarrow\ A(G),\qquad (f,g)\mapsto f+g
\]
is a graded isomorphism. In particular,
\[
A(G)\ \cong\ A(G|_U)\ \oplus\ A(G|_W)
\]
as graded algebras.
\end{lemma}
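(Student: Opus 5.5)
The plan is to realise everything at the level of the groupoid. We first show that $G$ splits as a disjoint union of the two open reductions, and then use the fact that functions on $G$ restrict to functions on each piece.

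\emph{Step 1 (groupoid decomposition).} Since $U$ and $W$ are invariant and $G^{(0)}=U\sqcup W$, every $\gamma\in G$ has $r(\gamma)$ and $s(\gamma)$ in the same piece. Hence
\[
G=G|_U\sqcup G|_W .
\]
Each of $G|_U=s^{-1}(U)$ and $G|_W=s^{-1}(W)$ is the preimage of a clopen set under the continuous map $s$, so each is clopen in $G$. Moreover, no composable pair has one factor in each piece, so $G|_U\,G|_W=\emptyset$.

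\emph{Step 2 (the map is a linear isomorphism onto $A(G)$).} By \cite[Proposition~2.9]{CEP}, applied to the open invariant sets $U$ and $W$, both $A(G|_U)$ and $A(G|_W)$ are ideals of $A(G)$, viewed as functions supported in the corresponding open subgroupoid. We also want the decomposition $f=f1_{G|_U}+f1_{G|_W}$, so we need each summand to lie in the right subalgebra. Write $f\in A(G)$ as a combination of $1_V$ with $V$ compact open bisections. For such $V$, the sets $V\cap G|_U$ and $V\cap G|_W$ are compact (closed in $V$), open, and are bisections. So $1_V=1_{V\cap G|_U}+1_{V\cap G|_W}$, which gives surjectivity. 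Injectivity follows because the supports lie in disjoint sets. The map respects multiplication, since $A(G|_U)A(G|_W)=0$ by Step 1 and the convolution of two functions supported in $G|_U$ is the same whether computed in $A(G|_U)$ or in $A(G)$.

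\emph{Step 3 (gradings).} Any cocycle $c$ restricts to cocycles on $G|_U$ and $G|_W$, and these define the gradings on the summands. If $V\subseteq c^{-1}(h)$, then $V\cap G|_U\subseteq c^{-1}(h)\cap G|_U$. Hence the splitting in Step 2 sends homogeneous elements of degree $h$ to pairs of homogeneous elements of degree $h$, and conversely. So the isomorphism is graded. In particular, $A(G|_U)=\bigoplus_h (A(G)_h\cap A(G|_U))$: projecting each homogeneous component of an element of $A(G|_U)$ onto the $W$-part gives zero. Therefore both ideals are graded.

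The only point requiring care is that intersecting a compact open bisection with $G|_U$ stays compact open. This is exactly where clopenness (rather than mere openness) of $U$ is used. Everything else is bookkeeping.
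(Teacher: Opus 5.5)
Your proof is correct and follows essentially the same route as the paper: split $G=G|_U\sqcup G|_W$ into clopen subgroupoids, decompose each element of $A(G)$ by restriction to the two pieces, and observe that degrees are preserved. The differences are minor. You decompose the generators $1_V$ rather than arbitrary locally constant compactly supported functions, and you verify multiplicativity (via $A(G|_U)A(G|_W)=0$) and the graded-ideal property explicitly, which the paper leaves implicit.
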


\begin{proof}
Since $U$ and $W$ are invariant, $G$ decomposes as a disjoint union of open subgroupoids $G = (G|_U)\ \sqcup\ (G|_W),$ 
and both $G|_U$ and $G|_W$ are open and closed in $G$ because $U$ and $W$ are clopen.

Any $f\in A(G)$ is a compactly supported, locally constant $K$--valued function on $G$.
Because $G|_U$ and $G|_W$ are clopen in $G$, the restrictions $f|_{G|_U}$ and $f|_{G|_W}$
belong to $A(G|_U)$ and $A(G|_W)$, respectively, with disjoint supports, and
\[
f = (f|_{G|_U}) + (f|_{G|_W}).
\]
Thus the sum map is surjective.
Injectivity follows because $G|_U$ and $G|_W$ are disjoint, so if $f+g=0$ with
$f\in A(G|_U)$ and $g\in A(G|_W)$, then $f=0=g$.
Finally, if $A(G)$ is graded via a cocycle $c:G\to H$, then $G|_U$ and $G|_W$ are
$H$--graded subgroupoids and the restriction maps preserve degrees, so the isomorphism
is graded.
\end{proof}

We next record necessary conditions for $A(G)$ to be just infinite, according to the assumptions imposed on $G$. 

\begin{proposition}\label{justinf_steinb}
    Suppose that $G$ is an ample Hausdorff groupoid. If $A(G)$ is just infinite, then 
    \begin{enumerate}
        \item $|G|_W|<\infty$ for every nonempty closed invariant and  proper subset $W$ of $G^{(0)}$; 
        \item if $u\in G^{(0)}$ and $\overline{\mathcal{O}_u}\neq G^{(0)}$, then $|G_u^u|<\infty$; 
        \item every orbit is either dense or finite;
    \end{enumerate}   
    If, in addition, 
    \begin{enumerate}[start=4]
        \item  $G^{(0)}$ is finite, then $G$ is transitive.
        \item  $G$ is second countable, then $G$ has a dense orbit; 
        \item  $G$ satisfies condition (FIC), then $G$ is minimal.
    \end{enumerate}   
\end{proposition}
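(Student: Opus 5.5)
The plan is to derive everything from (1), which follows from the short exact sequence above. Let $W$ be a nonempty, proper, closed invariant subset and put $U=G^{(0)}\setminus W$. Then $U$ is a nonempty open invariant set, and $A(G|_U)$ is an ideal of $A(G)$. It is nonzero: since $G$ is ample and $U\neq\emptyset$, there is a nonempty compact-open $V\subseteq U$, and $1_V\neq 0$. Just infiniteness then gives $\dim A(G)/A(G|_U)<\infty$. Since $A(G)/A(G|_U)\cong A(G|_W)$, Lemma~\ref{finite_dim_Steinb} applied to the ample Hausdorff groupoid $G|_W$ yields $|G|_W|<\infty$.

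For (2), take $W=\overline{\mathcal{O}_u}$. I would first check that the closure of an invariant set is invariant; this uses that $r$ and $s$ are open maps for étale groupoids, and it is the one routine point to verify. So $W$ is a nonempty, proper, closed invariant set, and (1) gives $|G|_W|<\infty$. Since $G_u^u\subseteq G|_W$, we get $|G_u^u|<\infty$. For (3), suppose $\mathcal{O}_u$ is not dense. Then (1) applied to $\overline{\mathcal{O}_u}$ gives $|\mathcal{O}_u|\le|\overline{\mathcal{O}_u}|\le |G|_{\overline{\mathcal{O}_u}}|<\infty$.

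For (4), let $G^{(0)}$ be finite, so it is discrete by the Hausdorff property. Every orbit is then clopen and invariant, so every orbit is dense or finite, and density forces the orbit to equal $G^{(0)}$. If there were two orbits, write $G^{(0)}=U\sqcup W$ with both parts clopen, invariant and nonempty. Lemma~\ref{lem:clopen-decomp} gives $A(G)\cong A(G|_U)\oplus A(G|_W)$, and both quotients $A(G|_W)$ and $A(G|_U)$ must be finite dimensional. That makes $A(G)$ finite dimensional, which is a contradiction. Hence $G$ is transitive.

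For (5), suppose there is no dense orbit. By (3) every orbit is then finite, and by (1) every orbit closure $W_u=\overline{\mathcal{O}_u}$ carries a finite groupoid $G|_{W_u}$. A finite subset of a Hausdorff space is closed, so $\mathcal{O}_u$ is itself closed. Second countability makes $G^{(0)}$, and hence the set of orbits, countable: $G^{(0)}$ is a countable union of finite orbits. Then $G$ is a countable union of the finite sets $G|_{\mathcal{O}_u}$, since each arrow lies over a single orbit, so $G$ is countable. I expect this to be the main obstacle. The plan is to use the Baire category theorem on the locally compact Hausdorff space $G^{(0)}$: it is a countable union of closed finite sets, so some point $u$ is isolated. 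Then $\mathcal{O}_u$ is open, since the range map carries the open set $s^{-1}(u)$ onto it and $r$ is open, and it is also closed, invariant and finite. If $\mathcal{O}_u\ne G^{(0)}$, the argument of (4) applies to $U=\mathcal{O}_u$ and its complement, and it shows $A(G)$ is finite dimensional, a contradiction. If instead $\mathcal{O}_u=G^{(0)}$, that orbit is dense. Either way there is a dense orbit.

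For (6), assume (FIC). First suppose some orbit $\mathcal{O}_u$ is not dense. Then $W=\overline{\mathcal{O}_u}$ is nonempty, invariant, and has $|G|_W|<\infty$ by (1), so (FIC) makes $W$ clopen. The clopen decomposition argument of (4), applied to $W$ and $G^{(0)}\setminus W$, again makes $A(G)$ finite dimensional, a contradiction. So every orbit is dense. Now let $C$ be a nonempty closed invariant set and suppose $C$ is proper. Then $|G|_C|<\infty$ by (1), and (FIC) makes $C$ clopen; the same argument gives a contradiction. So every nonempty closed invariant set is $G^{(0)}$, which is minimality.
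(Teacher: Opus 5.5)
Parts (1)--(4) and (6) are fine, but part (5) has a genuine gap at the sentence ``Second countability makes $G^{(0)}$, and hence the set of orbits, countable.'' Second countability does not imply countability, and nothing you have established at that point forces it. The Cantor set, viewed as a groupoid consisting only of units, is second countable, all its orbits are finite and closed, and its unit space is uncountable. Your Baire argument needs $G^{(0)}$ to be a countable union of finite closed sets, so all of (5) rests on this unproved claim. This is exactly where second countability has to be combined with just infiniteness, and it is where the paper does its real work. The step you flagged as the main obstacle (Baire) is in fact the easy part.

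The missing idea is to use (1) in the form: every nonempty proper open invariant subset of $G^{(0)}$ has finite complement. Take a countable basis $\{B_n\}$ of $G^{(0)}$ and fix $u_0\in G^{(0)}$. If $y\notin\mathcal{O}_{u_0}$, then $G^{(0)}\setminus\mathcal{O}_y$ is open (orbits are finite, hence closed), invariant, and contains $u_0$. So $u_0\in B_n\subseteq G^{(0)}\setminus\mathcal{O}_y$ for some $n$. The saturation $r(s^{-1}(B_n))$ is then a nonempty open invariant set not containing $y$, so its complement is finite and contains $y$. Hence $G^{(0)}\setminus\mathcal{O}_{u_0}$ lies in a countable union of finite sets, and $G^{(0)}$ is countable.

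The paper runs the same argument on the orbit space $Y=G/G^{(0)}$, after checking that $Y$ is $T_1$, second countable and Baire. It then concludes by noting that the dense open sets $Y\setminus\{y_m\}$ have empty intersection. Once countability is in place, your ending is a valid and slightly more direct alternative, since it avoids verifying that the orbit space is Baire: Baire on $G^{(0)}$ gives an isolated point $u$, the orbit $\mathcal{O}_u=r(s^{-1}(u))$ is clopen, and (1) applied to $\mathcal{O}_u$ and its complement makes $G$ finite unless $\mathcal{O}_u=G^{(0)}$.
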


    \begin{proof}
        We note that $G$ is infinite by Lemma~\ref{finite_dim_Steinb}.
    
        (1): Suppose that $W\subseteq G^{(0)}$ is a nonempty proper closed invariant set. Then $U=G^{(0)}\backslash W$ is a nonempty open invariant set. Hence, $A(G|_U)$ is a nontrivial ideal in $A(G)$ such that $A(G)/A(G|_U) \cong A(G|_W)$. By assumption, $\dim{A(G|_W)}<\infty$. Hence, $|G|_W|<\infty$ by Lemma~\ref{finite_dim_Steinb}.

        (2) and (3): Let $u\in G^{(0)}$ and suppose that $ \overline{\mathcal{O}_u}\neq G^{(0)}$. Then the orbit closure $ \overline{\mathcal{O}_u}$ is a nonempty, proper and closed invariant subset of $G^{(0)}$. Hence, $|G|_{\overline{\mathcal{O}_u}}|<\infty$, by Item~(1). Since $G_u^u, \mathcal{O}_u$, and $\overline{\mathcal{O}_u}$ are each a subset of $G|_{\overline{\mathcal{O}_u}}$, it follows that $|G_u^u|<\infty$, $|\overline{\mathcal{O}_u}|<\infty$, and $|\mathcal{O}_u|<\infty$.

        (4): Assume that $G^{(0)}$ is finite. If $G$ is not transitive, then $G^{(0)}$ decomposes into finitely many nonempty proper closed invariant subsets. By \textnormal{(1)}, each corresponding reduction is finite, and therefore $G$ is a finite union of finite sets. This contradicts the assumption that $A(G)$ is infinite-dimensional.

        (5) Suppose, towards a contradiction, that $G$ has no dense orbits.   

        Let $U\subseteq G^{(0)}$ be a nonempty proper open invariant subset, and set $        W:=G^{(0)}\setminus U.$
        Then $A_K(G|_U)$ is a nonzero proper ideal of $A_K(G)$, so, since $A_K(G)$ is just infinite,
        the quotient
        \[
        A_K(G)/A_K(G|_U)\cong A_K(G|_W)
        \]
        is finite-dimensional. By Proposition~\ref{finite_fibers} applied to the trivial cocycle,
        it follows that the groupoid $G|_W$ is finite. In particular, $W$ is finite. Hence every nonempty proper open invariant subset of $G^{(0)}$ has finite complement.

        Now fix $u\in G^{(0)}$. Since $G$ has no dense orbits, the orbit $\mathcal O_u$ is not dense in $G^{(0)}$, so its closure $\overline{\mathcal O_u}$ is a proper closed invariant subset of $G^{(0)}$.
        Therefore its complement
        \[
        U_u:=G^{(0)}\setminus \overline{\mathcal O_u}
        \]
        is a nonempty proper open invariant subset. By the previous paragraph,
        $\overline{\mathcal O_u}$ is finite. Since $\mathcal O_u$ is dense in its closure and the closure is finite
        Hausdorff, we must have $        \mathcal O_u=\overline{\mathcal O_u}.$
        Thus every orbit is finite and closed.
        
        Let
        \[
        q:G^{(0)}\to G/G^{(0)}
        \]
        be the quotient map onto the orbit space. Since $G$ is étale, $q$ is open, and since all orbits are closed,
        the orbit space $G/G^{(0)}$ is a $T_1$ space. Also, as an open continuous image of the second countable space
        $G^{(0)}$, the orbit space is second countable. Finally, because $G^{(0)}$ is locally compact Hausdorff,
        it is a Baire space, and since $q$ is open and surjective, the orbit space $G/G^{(0)}$ is also Baire.
        
        We claim that every nonempty proper open subset of $G/G^{(0)}$ has finite complement.
        Indeed, if $V\subseteq G/G^{(0)}$ is nonempty proper open, then $q^{-1}(V)$ is a nonempty proper open
        invariant subset of $G^{(0)}$, so its complement is finite. Since the fibres of $q$ are precisely the
        orbits, it follows that $
        (G/G^{(0)})\setminus V$
        is finite.  We have therefore shown that the orbit space $Y:=G/G^{(0)}$ is an infinite second countable \(T_1\) Baire  space such that every nonempty proper open subset of \(Y\) has finite complement. We now show that this is
        impossible.
        
        Fix \(y_0\in Y\), and let \(\{B_n\}_{n\in\mathbb N}\) be a countable basis for \(Y\). For each \(n\) such that
        \(y_0\in B_n\), the complement \(Y\setminus B_n\) is finite. We claim that
        \[
        Y\setminus\{y_0\}\subseteq \bigcup_{\{n:\,y_0\in B_n\}} (Y\setminus B_n).
        \]
        Indeed, if \(y\neq y_0\), then \(Y\setminus\{y\}\) is a nonempty open set containing \(y_0\), so some basis
        element \(B_n\) satisfies
        $
        y_0\in B_n\subseteq Y\setminus\{y\}.$
        Hence \(y\in Y\setminus B_n\), proving the claim. Since the right-hand side is a countable union of finite sets,
        it follows that \(Y\setminus\{y_0\}\) is countable. Thus \(Y\) itself is countable.
        
        Enumerate \(Y=\{y_0,y_1,y_2,\dots\}\). For each \(m\in\mathbb N\), the set
        $
        U_m:=Y\setminus\{y_m\}$
        is open and dense (it is open because \(Y\) is \(T_1\), and it is dense because \(\{y_m\}\) is not open,
        otherwise \(Y\) would be finite). But $
        \bigcap_{m\in\mathbb N} U_m=\emptyset,$
        contradicting that \(Y\) is a Baire space.
        
        This contradiction shows that no such groupoid \(G\) can exist.

        (6) Toward a contradiction, suppose that $G$ has a non-dense orbit $\mathcal{O}_u$, for some $u\in G^{(0)}$. Then $|\mathcal{O}_u|<\infty$, by Item~(3); thus $\mathcal{O}_u=\overline{\mathcal O_u}$, and so $|G|_{\mathcal O_u}|<\infty$ by Item~(1). Thus condition (FIC) implies that $\mathcal O_u$ is clopen. Let $\mathcal O_u^c = G^{(0)} \setminus \mathcal O_u$, which is also clopen and decomposes the unit space into $G^{(0)}=\mathcal{O}_u\sqcup \mathcal{O}_u^c$.  

        By Lemma~\ref{lem:clopen-decomp}, 
        \begin{eqnarray}\label{eq:direct_sum_decomp}
            A(G) = A(G|_{\mathcal{O}_u}) \oplus A(G|_{\mathcal{O}_u^c}),
        \end{eqnarray}
        and moreover, each summand is a graded ideal. Since $A(G)$ is just infinite, both 
        \[A(G|_{\mathcal{O}_u}) = A(G) / A(G|_{\mathcal{O}_u^c}), \text{ and } 
            A(G|_{\mathcal{O}_u^c}) = A(G) / A(G|_{\mathcal{O}_u})\]
        are finite dimensional. But then Equation~(\ref{eq:direct_sum_decomp}) implies that $A(G)$ is finite dimensional, a contradiction. Hence, all orbits are dense in $G^{(0)}$.

    \end{proof}

We are unaware of a characterization of all ideals in a Steinberg algebra in terms of the groupoid's topological structure, and thus completely characterizing just infinite Steinberg algebras seems out of reach at the moment. The following theorem will be crucial to characterize graded just infinite Steinberg algebras: 

\begin{theorem}\cite[Theorem~5.3]{CEP}\label{graded_ideals}
    (Graded Ideal Correspondence) If $c^{-1}(e)$ is strongly effective, then the correspondence 
    \[U \to A(G|_U)\]
    is an isomorphism from the lattice of open invariant sets on $G^{(0)}$ onto the lattice of graded ideals in $A(G)$.
\end{theorem}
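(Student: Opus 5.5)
The plan is to verify that the assignment $U\mapsto A(G|_U)$ is well defined and order preserving, then build an inverse map from graded ideals to open invariant sets, and show the two maps are mutually inverse. The hypothesis that $c^{-1}(e)$ is strongly effective is needed only for surjectivity.

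\emph{Well defined and order preserving.} For an open invariant $U$, the set $A(G|_U)$ is an ideal of $A(G)$ by \cite[Proposition~2.9]{CEP}, as recalled before Definition~\ref{def:FIC}. It is graded because it is spanned by the $1_B$ with $B\subseteq G|_U$ a compact open bisection. Each such $B$ is the finite disjoint union of the compact open bisections $B\cap c^{-1}(h)$, since $c$ is continuous into a discrete group. Clearly $U\subseteq U'$ implies $A(G|_U)\subseteq A(G|_{U'})$.

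\emph{The inverse map.} For a graded ideal $I$, set
\[
U_I:=\bigcup\{V\subseteq G^{(0)} : V \text{ compact open},\ 1_V\in I\}.
\]
This set is open. It is invariant because for a compact open bisection $B$ and $1_V\in I$ we have $1_B1_V1_{B^{-1}}=1_{BVB^{-1}}\in I$, and $BVB^{-1}=r(BV)$. Every element of $A(G|_{U_I})$ is a combination of $1_B$ with $s(B)$ contained in a finite union of such $V$; by compactness we may use finitely many $V$ and replace them by a single compact open $V_0\subseteq U_I$ with $1_{V_0}\in I$. Hence $1_B=1_B1_{V_0}\in I$, and so $A(G|_{U_I})\subseteq I$. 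Conversely $U_{A(G|_U)}=U$, because a compact open $V\subseteq G^{(0)}$ satisfies $1_V\in A(G|_U)$ exactly when $V\subseteq U$. This gives injectivity, and also that the inverse respects order.

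\emph{Surjectivity, the main step.} It remains to show $I\subseteq A(G|_{U_I})$. Let $W=G^{(0)}\setminus U_I$ and use the restriction $\pi:A(G)\to A(G|_W)$ from the exact sequence above. Then $J=\pi(I)$ is a graded ideal of $A(G|_W)$, and the goal is $J=0$. First, $J$ contains no $1_V$ with $\emptyset\neq V\subseteq W$ compact open. Indeed, if $\pi(f)=1_V$ with $f\in I$, extend $V$ to a compact open $V'\subseteq G^{(0)}$ with $V'\cap W=V$. Then $f-1_{V'}\in\ker\pi=A(G|_{U_I})\subseteq I$, so $1_{V'}\in I$ and $V'\subseteq U_I$, which contradicts $V\neq\emptyset$.

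Now suppose $0\neq f\in J_h$. Choose a compact open bisection $B\subseteq c^{-1}(h)\cap G|_W$ meeting the support of $f$ suitably, so that $g:=1_{B^{-1}}f\in J_e$ is nonzero. Then $g$ lies in $A(c^{-1}(e)|_W)$, and $c^{-1}(e)|_W$ is effective by strong effectiveness, since $W$ is closed invariant. The standard Cuntz--Krieger uniqueness argument for effective ample groupoids then produces a nonempty compact open $V\subseteq W$ and a scalar $a\neq0$ with $1_Vg1_V=a1_V$. This puts $1_V$ in $J$, a contradiction.

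I expect this last step to be the main obstacle. It requires carefully reducing to the degree-$e$ fibre, where effectiveness is available, and checking that $1_Vg1_V$ computed in $A(G|_W)$ indeed equals $a1_V$. For this I would first pick a unit $u$ with $g(u)\neq0$, after possibly translating by a bisection, and then shrink $V$ around $u$ so that $V$ avoids the finitely many non-unit bisections in the support of $g$ whose isotropy parts have empty interior.
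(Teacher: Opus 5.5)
Your argument is correct and is essentially the proof of \cite[Theorem~5.3]{CEP}, which the paper cites without proving. You recover $U_I$ from the idempotents $1_V\in I$, show $A(G|_{U_I})\subseteq I$, and then kill the graded ideal $\pi(I)\subseteq A(G|_W)$ by translating a homogeneous element to degree $e$ and using effectiveness of $c^{-1}(e)|_W$; this is exactly the generalised uniqueness theorem of \cite{CEP} unpacked.

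One small fix is needed in the last step. You cannot always shrink $V$ around your chosen unit $u$, because $u$ may be a fixed point of one of the non-unit bisections $B_i$ in the decomposition of $g$. Instead, pick a point of the nonempty open set $\supp g\cap W$ that lies outside the finitely many closed nowhere dense sets $s(B_i\cap \mathrm{Iso}(c^{-1}(e)|_W))$, and shrink $V$ around that point.
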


Given Theorem~\ref{graded_ideals}, we can completely characterize graded just infinite Steinberg algebras for which $c^{-1}(e)\subseteq G$ is strongly effective.

\begin{theorem}\label{graded_justinf_steinb}
    Suppose that $G$ is an infinite ample Hausdorff groupoid and $c:G\to H$ is a continuous cocycle such that $c^{-1}(e)$ is strongly effective. Then, $A(G)$ is graded just infinite if and only if $|G|_W|<\infty$ for every proper, closed and invariant subset $W$ of $G^{(0)}$.   
\end{theorem}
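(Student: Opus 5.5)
The plan is to translate graded just infiniteness into a statement about open invariant subsets, using the Graded Ideal Correspondence (Theorem~\ref{graded_ideals}), and then apply the short exact sequence $0\to A(G|_U)\to A(G)\to A(G|_W)\to 0$ together with Lemma~\ref{finite_dim_Steinb}. First note that, since $G$ is infinite, Lemma~\ref{finite_dim_Steinb} gives $\dim A(G)=\infty$. Hence graded just infiniteness reduces to the condition that $\dim A(G)/J<\infty$ for every nonzero graded ideal $J$.

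\textbf{Forward direction.} Assume $A(G)$ is graded just infinite, and let $W\subsetneq G^{(0)}$ be closed and invariant. Set $U=G^{(0)}\setminus W$, which is a nonempty open invariant set. Then $A(G|_U)$ is an ideal containing $1_V$ for any nonempty compact open $V\subseteq U$, so it is nonzero. It is graded: by \cite[Proposition~2.9]{CEP} it is spanned by indicator functions $1_B$ of compact open bisections $B\subseteq G|_U$. Each such $B$ can be split into the finitely many clopen pieces $B\cap c^{-1}(h)$, since $c$ is continuous and $B$ is compact. Theorem~\ref{graded_ideals} also gives gradedness directly. Graded just infiniteness then yields $\dim A(G|_W)=\dim A(G)/A(G|_U)<\infty$, and Lemma~\ref{finite_dim_Steinb} gives $|G|_W|<\infty$. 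The case $W=\emptyset$ is trivial.

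\textbf{Converse.} Let $J$ be a nonzero graded ideal. By Theorem~\ref{graded_ideals}, which uses strong effectiveness of $c^{-1}(e)$, we have $J=A(G|_U)$ for some open invariant $U$, and $U\neq\emptyset$ because $J\neq 0$. Put $W=G^{(0)}\setminus U$. This is closed, invariant and proper, so by hypothesis $|G|_W|<\infty$. Then $A(G)/J\cong A(G|_W)$ is finite dimensional by Lemma~\ref{finite_dim_Steinb}. Hence $A(G)$ is graded just infinite.

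The only step needing care is verifying that $A(G|_U)$ is indeed a graded ideal and that the correspondence is exhaustive. This is exactly what Theorem~\ref{graded_ideals} supplies, so I expect no real obstacle beyond correctly invoking the identification $A(G)/A(G|_U)\cong A(G|_W)$ established above.
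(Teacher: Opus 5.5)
Your proposal is correct and follows the paper's proof essentially step for step. It obtains infinite-dimensionality from Lemma~\ref{finite_dim_Steinb}, proves the forward direction via the quotient $A(G)/A(G|_U)\cong A(G|_W)$ exactly as in Proposition~\ref{justinf_steinb}(1), and proves the converse via the graded ideal correspondence of Theorem~\ref{graded_ideals}. Your direct check that $A(G|_U)$ is graded, by splitting each compact open bisection along the clopen fibres of $c$, is a small bonus: it shows the forward implication does not actually need strong effectiveness of $c^{-1}(e)$.
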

    \begin{proof}
      We start by observing that $A(G)$ is infinite dimensional by Lemma \ref{finite_dim_Steinb}.
    
      If $A(G)$ is graded just infinite, then applying Theorem \ref{graded_ideals} and the same argument used to prove Proposition~\ref{justinf_steinb}(1), we obtain that $|G|_W|<\infty$ for every nonempty proper closed invariant subset $W\subseteq G^{(0)}$. 

      Let $U$ be a nonempty,  proper, open invariant subset of $G^{(0)}$ and let $W=G^{(0)}\setminus U$. Then $W$ is a proper closed invariant subset of $G^{(0)}$. Thus, $|G|_W|<\infty$ by hypothesis. Then $A(G)/A(G|_U)\cong {A(G|_W)}$, which is finite dimensional by Lemma~\ref{finite_dim_Steinb}. Since $c^{-1}(e)$ is strongly effective, it follows from Theorem~\ref{graded_ideals} that all nonzero graded ideals of $A(G)$ are of the form $A(G|_U)$ for a nonempty open invariant subset $U\subseteq G^{(0)}$. Hence, $A(G)$ is graded just infinite.
    \end{proof}

\begin{corollary}
    Suppose that $G$ is an infinite, ample, Hausdorff, and strongly effective  groupoid. Then $A(G)$ is just infinite if and only if  $|G|_W|<\infty$ for every proper, closed invariant subset $W$ of $G^{(0)}$. 
\end{corollary}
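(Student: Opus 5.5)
This is the special case of Theorem~\ref{graded_justinf_steinb} for the trivial grading. Take $H=\{e\}$ and let $c:G\to H$ be the trivial cocycle, which is continuous. Then $c^{-1}(e)=G$, and the induced grading has a single homogeneous component $A(G)_e=A(G)$.

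The first step is to check that every ideal of $A(G)$ is graded for this grading. If $I$ is an ideal, then $I=I\cap A(G)_e=\bigoplus_{h\in H}(I\cap A(G)_h)$. So the graded ideals are exactly all ideals. Consequently, $A(G)$ is graded just infinite for the trivial grading if and only if it is just infinite.

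The second step is to note that the hypothesis of Theorem~\ref{graded_justinf_steinb} holds: $c^{-1}(e)=G$ is strongly effective by assumption, and $G$ is infinite, ample and Hausdorff. Applying that theorem gives that $A(G)$ is graded just infinite, hence just infinite, if and only if $|G|_W|<\infty$ for every proper closed invariant subset $W\subseteq G^{(0)}$.

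There is no real obstacle. The only point to watch is the empty set $W=\emptyset$, which satisfies the condition trivially; this is handled exactly as in the theorem. Alternatively, the forward direction also follows directly from Proposition~\ref{justinf_steinb}(1), and only the converse needs Theorem~\ref{graded_ideals}. With the trivial grading, that theorem says every nonzero ideal has the form $A(G|_U)$ with $U$ nonempty, open and invariant. Its quotient is $A(G|_W)$, which is finite dimensional by Lemma~\ref{finite_dim_Steinb}.
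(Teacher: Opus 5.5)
Your proposal is correct and follows the paper's proof: apply Theorem~\ref{graded_justinf_steinb} to the trivial cocycle $c\colon G\to\{e\}$, so that $c^{-1}(e)=G$ is strongly effective by hypothesis. You also check explicitly that every ideal is graded for this grading, so graded just infinite coincides with just infinite; the paper leaves this step implicit.
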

    \begin{proof}
        Consider the trivial grading and apply Theorem~\ref{graded_justinf_steinb}.
    \end{proof}

Given Theorem~\ref{graded_justinf_steinb}, it is useful to have concrete criteria ensuring that
$c^{-1}(e)$ is strongly effective. In many important examples, however, one has the stronger
property that $c^{-1}(e)$ is principal; see, for instance, \cite[Lemma~7.3]{CEP}.
Motivated by this, we next characterize when $c^{-1}(e)$ is principal for locally compact Hausdorff groupoids.

\begin{proposition}\label{prop:principal.fiber}
Let $\mathcal{G}$ be a locally compact Hausdorff groupoid with unit space $\mathcal{G}^{(0)}$, let $\Gamma$ be a group, and let $c\colon \mathcal{G} \to \Gamma$ be a continuous cocycle. Then $c$ is injective on each isotropy group if and only if the identity fibre $\mathcal{G}_e := c^{-1}(e)$ is a principal groupoid. In particular, $c^{-1}(e)$ is strongly effective.
\end{proposition}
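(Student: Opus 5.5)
The plan is to work directly from the definitions. A groupoid is principal when its isotropy is trivial, i.e.\ every isotropy group is just its unit. Note that $\mathcal{G}_e=c^{-1}(e)$ is a subgroupoid, since $c$ is a cocycle: $c(\gamma\eta)=c(\gamma)c(\eta)$ and $c(\gamma^{-1})=c(\gamma)^{-1}$. It contains $\mathcal{G}^{(0)}$ because $c(u)=c(uu)=c(u)^2$ forces $c(u)=e$. Also, the isotropy of $\mathcal{G}_e$ at $u$ is exactly
\[(\mathcal{G}_e)^u_u=\mathcal{G}^u_u\cap c^{-1}(e)=\ker\bigl(c|_{\mathcal{G}^u_u}\bigr).\]
The equivalence therefore reduces to the elementary fact that a group homomorphism is injective if and only if its kernel is trivial.

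\textbf{Forward direction.} Assume $c$ is injective on each $\mathcal{G}^u_u$. If $\gamma\in(\mathcal{G}_e)^u_u$, then $c(\gamma)=e=c(u)$, and injectivity gives $\gamma=u$. Hence $\mathcal{G}_e$ is principal.

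\textbf{Converse.} Assume $\mathcal{G}_e$ is principal, and let $\gamma,\eta\in\mathcal{G}^u_u$ with $c(\gamma)=c(\eta)$. Then $\gamma\eta^{-1}\in\mathcal{G}^u_u$ and $c(\gamma\eta^{-1})=e$, so $\gamma\eta^{-1}\in(\mathcal{G}_e)^u_u=\{u\}$. Thus $\gamma=\eta$.

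\textbf{The ``in particular''.} If $\mathcal{G}_e$ is principal, then for every closed invariant $C\subseteq\mathcal{G}_e^{(0)}=\mathcal{G}^{(0)}$ the reduction $\mathcal{G}_e|_C$ is again principal, since its isotropy groups are subgroups of those of $\mathcal{G}_e$. So $\mathrm{Iso}(\mathcal{G}_e|_C)=C$, and its interior is contained in $C$. For strong effectiveness we need equality $\mathrm{Iso}(\mathcal{G}_e|_C)^\circ=C$, which asks that $C$ be open in $\mathcal{G}_e|_C$.

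This last point is the only real obstacle. In the paper's setting $\mathcal{G}$ is ample and étale, so $\mathcal{G}^{(0)}$ is open in $\mathcal{G}$. The fibre $c^{-1}(e)$ is open because $c$ is continuous into a discrete group, so $\mathcal{G}_e$ is étale as well. Its restriction to a closed invariant set is étale with the relative topology, and so $C$ is open in $\mathcal{G}_e|_C$. Hence $\mathrm{Iso}(\mathcal{G}_e|_C)^\circ=C$, meaning every such reduction is effective, and $\mathcal{G}_e$ is strongly effective. I would state explicitly that this step uses the étale (ample) hypothesis running through the paper, in line with Theorem~\ref{graded_ideals}.
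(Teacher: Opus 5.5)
Your proof is correct and follows the paper's argument: both identify $(\mathcal{G}_e)^u_u$ with $\ker\bigl(c|_{\mathcal{G}^u_u}\bigr)$ and use that a homomorphism is injective if and only if its kernel is trivial. The paper asserts the ``in particular'' clause without argument. You are right that it needs the \'etale hypothesis: under the paper's definition of effectiveness ($\mathrm{Iso}(G)^\circ=G^{(0)}$), a principal groupoid whose unit space is not open is not effective, for example the pair groupoid on $[0,1]$.
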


\begin{proof}
For each $u \in \mathcal{G}^{(0)}$, the restriction $c|_{\mathcal{G}_u^u} \colon \mathcal{G}_u^u \to \Gamma$ is a group homomorphism, and its kernel is
\[
\ker\bigl(c|_{\mathcal{G}_u^u}\bigr) = \{\gamma \in \mathcal{G} : r(\gamma) = s(\gamma) = u,\; c(\gamma) = e\} = (\mathcal{G}_e)_u^u.
\]
Therefore
\[
c|_{\mathcal{G}_u^u} \text{ is injective} \iff \ker\bigl(c|_{\mathcal{G}_u^u}\bigr) = \{u\} \iff (\mathcal{G}_e)_u^u = \{u\}.
\]
Since this holds for every $u \in \mathcal{G}^{(0)}$, we conclude that $c$ is injective on isotropy if and only if $\mathcal{G}_e$ is principal.
\end{proof}

Suppose further that $G$ is second countable. It is then natural to ask when a just infinite Steinberg algebra is CCR. The next result shows that just infiniteness imposes strong restrictions on the dimensions of irreducible representations.
 
 \begin{proposition}
Let $G$ be an infinite second countable Hausdorff ample groupoid, and let $K$ be an algebraically closed field with $|K|>\dim A_K(G)$. If $A_K(G)$ is just infinite, then $A_K(G)$ is CCR if and only if $G$ is transitive and all isotropy groups are CCR. In particular, if all isotropy groups are virtually abelian, then $A_{\mathbb C}(G)$ is CCR if and only if $G$ is transitive.
\end{proposition}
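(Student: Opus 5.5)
The plan is to combine Proposition~\ref{justinf_steinb}(5) with the characterization of CCR Steinberg algebras from \cite{CSW_20}. That characterization was already used in the proof of Proposition~\ref{LF_CCR}. I will take \cite[Theorem~6.1]{CSW_20} in the following form: for $G$ second countable and $K$ algebraically closed with $|K|>\dim A_K(G)$, the algebra $A_K(G)$ is CCR if and only if every orbit of $G$ is closed in $G^{(0)}$ and every isotropy group algebra $KG_u^u\cong A_K(G_u^u)$ is CCR. This is the main point to check: I must confirm that the cited theorem really is an ``if and only if'' under exactly these hypotheses, because the proof of Proposition~\ref{LF_CCR} only used the ``if'' direction. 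If only one direction is available there, I would try to prove the necessity of closed orbits directly. For a non-closed orbit $\mathcal O_u$ I would look at the irreducible representation of $A_K(G)$ on $K\mathcal{O}_u$, or an induced one, and show that it cannot act by finite rank operators.

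For the forward direction, assume $A_K(G)$ is just infinite and CCR. Since $G$ is second countable, Proposition~\ref{justinf_steinb}(5) gives a dense orbit $\mathcal O_u$. By the CCR characterization every orbit is closed, so $\mathcal O_u=\overline{\mathcal O_u}=G^{(0)}$. Hence $G$ has a single orbit, that is, $G$ is transitive. The same characterization gives that every isotropy group is CCR, meaning that $KG_v^v$ is CCR for every $v$.

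For the converse, assume $G$ is transitive and all isotropy groups are CCR. Then the only orbit is $G^{(0)}$ itself, which is trivially closed, and the isotropy algebras are CCR. The ``if'' direction of \cite[Theorem~6.1]{CSW_20}, exactly as used in Proposition~\ref{LF_CCR}, then shows that $A_K(G)$ is CCR. Just infiniteness is not needed for this direction.

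For the final claim, take $K=\mathbb C$. Since $G$ is second countable and ample, it has a countable basis of compact open bisections, so $A_{\mathbb C}(G)$ has countable dimension. Thus $|\mathbb C|>\dim A_{\mathbb C}(G)$, and $\mathbb C$ is algebraically closed. Each isotropy group $G_u^u$ is countable, because it is a discrete subset of a second countable space. If it is also virtually abelian, \cite[Corollary~4.2]{CSW_20} shows that $\mathbb C G_u^u$ is CCR, just as in Proposition~\ref{LF_CCR}. The equivalence then reduces to ``$A_{\mathbb C}(G)$ is CCR if and only if $G$ is transitive.''
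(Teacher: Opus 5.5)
Your proposal is correct and follows essentially the same route as the paper. The paper also uses \cite[Theorem~6.1]{CSW_20} as an equivalence (all orbits closed and all isotropy groups CCR). It combines this with the dense orbit from Proposition~\ref{justinf_steinb}(5) to force transitivity, and handles the converse and the virtually abelian case the same way. The paper additionally cites Proposition~\ref{justinf_steinb}(3) (every orbit is finite or dense), which, as your argument shows, is not actually needed.
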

\begin{proof}
        
 By \cite[Theorem~6.1]{CSW_20}, the algebra $A_K(G)$ is CCR if and only if all orbits are closed and all isotropy groups are CCR. Thus, in order to prove the proposition, it remains to determine when all orbits are closed under the additional assumption that all isotropy groups are CCR.

Since $A_K(G)$ is just infinite, Proposition~\ref{justinf_steinb}(3) shows that every orbit is either finite or dense. Moreover, by Proposition~\ref{justinf_steinb}(5), $G$ has at least one dense orbit, say $\mathcal O_u$ for some $u\in G^{(0)}$. Finite orbits are closed, so it remains only to consider dense orbits. A dense orbit is closed if and only if it coincides with $G^{(0)}$, that is, if and only if $G$ is transitive.

Now assume that $G$ is not transitive. Then the dense orbit $\mathcal O_u$ is a proper dense subset of $G^{(0)}$, and hence is not closed. By \cite[Theorem~6.1]{CSW_20}, it follows that $A_K(G)$ is not CCR.

Conversely, suppose that $A_K(G)$ is CCR. Then all orbits are closed, so the dense orbit $\mathcal O_u$ must be equal to $G^{(0)}$. Therefore $G$ is transitive.

Finally, if $G$ is transitive and all isotropy groups are CCR, then the unique orbit is $G^{(0)}$, which is closed. Hence \cite[Theorem~6.1]{CSW_20} implies that $A_K(G)$ is CCR.\footnote{Note that this implication does not require $A_K(G)$ to be just infinite.}

If all isotropy groups are virtually abelian, then the final statement follows from the preceding equivalence together with \cite[Proposition~2.71]{CSW_20}.
\end{proof}

\section{Graded just infiniteness versus graded simplicity for Steinberg algebras}

In this section we consider ample groupoids whose Steinberg algebras are graded just infinite with respect to a given cocycle. In \cite[Theorem~2.4]{APMlocal} the authors show that for infinite dimensional Leavitt path algebras with the standard $\mathbb{Z}$-grading, being graded just infinite is equivalent to being graded simple. We aim to find a groupoid version of \cite[Theorem~2.4]{APMlocal} that generalizes it. 

Fix an ample groupoid $G$, a field $K$, and let $c:G\to H$ be a continuous cocycle into a discrete group $H$ with identity $e$. 

We start by observing that we cannot obtain an analog of \cite[Theorem~2.4]{APMlocal} for general Steinberg algebras, as the following example shows.

\begin{example}\label{No_LPA_analog}
Consider the groupoid $G=(\nn\times\nn)\cup\{\infty\}$, where the unit space is $\nn\cup\{\infty\}$, the one-point compactification of $\nn$, and every pair $(n,m)\in\nn\times\nn$ is an isolated point. The operation is given by $(n,m)(m,k)=(n,k)$ for every $n,m,k\in\nn$. Consider the cocycle $c:G\to\zn$, such that $c(n,m)=n-m$ for $n,m\in\nn$ and $c(\infty)=0$. Then $G$ satisfies the hypotheses of Theorem \ref{graded_justinf_steinb}, so $A(G)$ is graded just infinite, since the only proper, closed invariant subset of $G^{(0)}$ is $\{\infty\}$ and $G|_{\{\infty\}}=\{\infty\}$. On the other hand, $A(\nn\times\nn)$ is a graded ideal of $A(G)$, and hence $A(G)$ is not graded simple.
\end{example}

While we cannot obtain an analog of \cite[Theorem~2.4]{APMlocal} in general, the following theorem shows that the class of ample groupoids satisfying condition (FIC) does generalize \cite[Theorem~2.4]{APMlocal}. It is our first main theorem of this section, and more specifically,  shows that graded simplicity and graded just infinite are equivalent under mild conditions. It applies to a large class of groupoids, as we show in Corollary~\ref{DR_JI_Simple}.

\begin{theorem}
\label{thm:FIC-justinf-simple}
Let $G$ be an ample Hausdorff \'etale groupoid and let $c:G\to H$ be a continuous cocycle
into a discrete group. Assume that:
\begin{itemize}
\item $c^{-1}(e)$ is strongly effective;
\item $G$ satisfies Property~\emph{(FIC)};
\item $A(G)$ is infinite-dimensional.
\end{itemize}
Then $A(G)$ is graded just infinite (with respect to the $H$--grading induced by $c$)
if and only if $A(G)$ is graded simple.
\end{theorem}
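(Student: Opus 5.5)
The easy direction is that graded simple implies graded just infinite. If $A(G)$ is graded simple, its only nonzero graded ideal is $A(G)$ itself. The quotient by it is zero, hence finite dimensional. Since $A(G)$ is infinite-dimensional by hypothesis, it is graded just infinite.

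For the converse, assume $A(G)$ is graded just infinite. By Theorem~\ref{graded_ideals}, since $c^{-1}(e)$ is strongly effective, the graded ideals of $A(G)$ are exactly the ideals $A(G|_U)$ with $U\subseteq G^{(0)}$ open and invariant. So graded simplicity amounts to showing that $G^{(0)}$ has no nonempty proper open invariant subset. Suppose, towards a contradiction, that $U$ is one, and set $W=G^{(0)}\setminus U$. Then $W$ is nonempty, proper, closed and invariant. By Theorem~\ref{graded_justinf_steinb}, or directly from $A(G)/A(G|_U)\cong A(G|_W)$ together with Lemma~\ref{finite_dim_Steinb}, we get $|G|_W|<\infty$. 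Property (FIC) now says $W$ is clopen, so $U$ is clopen as well, and $G^{(0)}=U\sqcup W$ is a decomposition into clopen invariant sets.

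By Lemma~\ref{lem:clopen-decomp}, $A(G)\cong A(G|_U)\oplus A(G|_W)$ as graded algebras, and both summands are graded ideals. Both are nonzero because $U$ and $W$ are nonempty, and each unit is a compact open bisection inside the unit space. Graded just infiniteness applied to the nonzero graded ideal $A(G|_W)$ gives that $A(G)/A(G|_W)\cong A(G|_U)$ is finite dimensional. We already know $A(G|_W)$ is finite dimensional since $|G|_W|<\infty$. Hence $A(G)=A(G|_U)\oplus A(G|_W)$ is finite dimensional, contradicting the hypothesis. So no such $U$ exists, and $A(G)$ is graded simple.

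The proof is essentially the argument of Proposition~\ref{justinf_steinb}(6) transported to the graded setting. The step that needs care is making sure the ideals used are graded, so that graded just infiniteness applies to them. The graded ideal correspondence of Theorem~\ref{graded_ideals} guarantees this for $A(G|_U)$ and $A(G|_W)$, and Lemma~\ref{lem:clopen-decomp} guarantees it for the summands of the clopen decomposition. One must also check that $U$ is nonempty, so that $A(G|_U)\neq 0$ and the ideal $A(G|_U)$ is genuinely nonzero.
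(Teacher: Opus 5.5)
Your proof is correct and follows the paper's argument essentially step for step. You take a nonempty proper open invariant $U$, use graded just infiniteness and Lemma~\ref{finite_dim_Steinb} to get $|G|_W|<\infty$, invoke (FIC) to make $W$ clopen, and then use the graded decomposition of Lemma~\ref{lem:clopen-decomp} to force $A(G)$ to be finite dimensional, which is a contradiction. (One small wording fix: a single unit need not be a compact open bisection. What you actually need is that each unit has a compact open neighborhood inside the open set $U$, resp.\ the clopen set $W$, and this gives nonzero elements of $A(G|_U)$ and $A(G|_W)$.)
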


\begin{proof}

It is straightforward that graded simple implies graded just infinite. We prove the converse. 

Assume that $A(G)$ is graded just infinite. 

We claim that $G$ is minimal. Suppose not. Then there exists a nonempty proper open
invariant subset $U\subsetneq G^{(0)}$. Let $W:=G^{(0)}\setminus U$. Then $W$ is a nonempty
proper closed invariant subset of $G^{(0)}$.

Since $c^{-1}(e)$ is strongly effective, by the graded ideal correspondence (Theorem~\ref{graded_ideals}) we have that
$A(G|_U)$ is a nonzero proper graded ideal of $A(G)$ and
\[
A(G)/A(G|_U)\ \cong\ A(G|_W).
\]
Because $A(G)$ is graded just infinite, the quotient $A(G|_W)$ must be finite-dimensional.
By Lemma~\ref{finite_dim_Steinb},
it follows that $|G|_W|<\infty$.

Now, by Property~(FIC), we deduce that $W$ is clopen. Hence $U$ is also clopen.

By Lemma~\ref{lem:clopen-decomp} we have a graded direct sum decomposition
\[
A(G)\ \cong\ A(G|_U)\ \oplus\ A(G|_W).
\]
In particular, $A(G|_W)$ is a nonzero graded ideal as well, and
\[
A(G)/A(G|_W)\ \cong\ A(G|_U).
\]
Since $A(G)$ is graded just infinite, this quotient must also be finite-dimensional. So, $\dim_K A(G|_U)<\infty$. 

Therefore both summands in the direct sum decomposition of $A(G)$ are finite-dimen\-sional, and thus
$A(G)$ is finite-dimensional, contradicting the hypothesis that $A(G)$ is
infinite-dimensional. This contradiction shows that $G$ must be minimal.

Finally, since $c^{-1}(e)$ is strongly effective, the graded ideal correspondence (Theorem~\ref{graded_ideals}) implies
that minimality of $G$ is equivalent to graded simplicity of $A(G)$.
Thus $A(G)$ is graded simple, as required.
\end{proof}

\begin{remark}
    The groupoid of Example~\ref{No_LPA_analog} does not satisfy Property (FIC) since $\{\infty\}$ is closed invariant subset with $G|_{\{\infty\}}$ finite, but $\{\infty\}$ is not open. Hence, it is not enough to only assume that $c^{-1}(e)$ is strongly effective and $A(G)$ is infinite-dimensional in Theorem~\ref{thm:FIC-justinf-simple}.
\end{remark}

The next result, however, shows that when $c^{-1}(e)$ is strongly effective and $A(G)$ is locally finite, then $A(G)$ being graded just infinite is equivalent to being graded simple. 

\begin{corollary}\label{cor:conj20-from-FIC}
Assume that $c^{-1}(e)$ is strongly effective and that $A(G)$ is locally finite and
infinite-dimensional. Then $A(G)$ is graded just infinite if and only if it is graded simple.
\end{corollary}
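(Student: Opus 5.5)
The plan is to reduce the statement directly to Theorem~\ref{thm:FIC-justinf-simple}. That theorem requires $G$ to be an ample Hausdorff \'etale groupoid with $c^{-1}(e)$ strongly effective, $G$ satisfying Property~(FIC), and $A(G)$ infinite-dimensional. The standing assumptions of the section supply the first of these, and the hypotheses of the corollary supply strong effectiveness of $c^{-1}(e)$ and infinite-dimensionality. So the only thing left to check is Property~(FIC).

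This is exactly the content of Example~\ref{ex:LF.implies.FIC}. If $A(G)$ is locally finite for the grading induced by $c$, then Remark~\ref{discrete_unitspace} gives the following. Since $G^{(0)}\subseteq c^{-1}(e)$ and $|c^{-1}(e)|<\infty$ by Theorem~\ref{Loc_finite_Steinb}, the unit space $G^{(0)}$ is finite. Being Hausdorff, it is therefore discrete. Hence every subset of $G^{(0)}$, and in particular every nonempty invariant $W$ with $|G|_W|<\infty$, is clopen. This verifies (FIC), and applying Theorem~\ref{thm:FIC-justinf-simple} yields the equivalence.

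There is no real obstacle. The one point to double-check is that discreteness of $G^{(0)}$ follows from finiteness: a finite Hausdorff space is discrete, so every subset is clopen. The trivial direction (graded simple $\Rightarrow$ graded just infinite, since the only nonzero graded ideal is $A(G)$ itself) is already handled inside Theorem~\ref{thm:FIC-justinf-simple}.
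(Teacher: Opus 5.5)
Your proposal is correct and matches the paper's proof exactly. You verify Property~(FIC) through Example~\ref{ex:LF.implies.FIC}, since local finiteness forces $G^{(0)}$ to be finite and hence discrete, and you then invoke Theorem~\ref{thm:FIC-justinf-simple}.
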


\begin{proof}
The result follows from Example~\ref{ex:LF.implies.FIC} and Theorem~\ref{thm:FIC-justinf-simple}.
\end{proof}

We now proceed to study when $A(G)$ is both locally finite and (graded) just infinite.

\begin{proposition}\label{LFJI_trans}
    If $A(G)$ is locally finite and just infinite, then $G$ is transitive. Moreover, $A(G)$ is isomorphic to a matrix algebra $M_{n}(A(G_{u}^{u}))$, for any $u\in G^{(0)}$ and some $n\in \mathbb{N}$. 
\end{proposition}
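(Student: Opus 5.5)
The plan is to reduce everything to the finite unit space and then apply the earlier results directly. Since $A(G)$ is locally finite for the grading induced by $c$, Remark~\ref{discrete_unitspace} (via Theorem~\ref{Loc_finite_Steinb}) shows that $G^{(0)}\subseteq c^{-1}(e)$ is finite and discrete, and that $G$ itself is discrete. Since $A(G)$ is just infinite, Proposition~\ref{justinf_steinb}(4) applies because $G^{(0)}$ is finite, and it gives that $G$ is transitive.

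For completeness I would recall why (4) holds. If there were two orbits, each orbit would be a finite set of units. It would therefore be closed, and in fact clopen, in the finite discrete space $G^{(0)}$. It would also be invariant. Item (1) of Proposition~\ref{justinf_steinb} would then make every reduction $G|_{\mathcal O_i}$ finite. Hence $G=\bigsqcup_i G|_{\mathcal O_i}$ would be finite, contradicting $\dim A(G)=\infty$ by Lemma~\ref{finite_dim_Steinb}.

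For the matrix description, I would invoke Corollary~\ref{direct_matrices}, which applies because $A(G)$ is locally finite. It gives
\[A(G)\cong\bigoplus_{i=1}^k M_{n_i}\bigl(A(G_{u_i}^{u_i})\bigr),\]
with $k$ the number of orbits. Transitivity forces $k=1$ and $n_1=|G^{(0)}|=:n$, so $A(G)\cong M_n(A(G_{u_1}^{u_1}))$. To see that $u_1$ may be replaced by any $u\in G^{(0)}$, I would use the observation stated before Corollary~\ref{direct_matrices}: for $v\in\mathcal O_u$ we have $G_u^u\cong G_v^v$, via conjugation by an arrow $\gamma$ with $s(\gamma)=u$ and $r(\gamma)=v$. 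Since $G$ is transitive, every $u$ lies in $\mathcal O_{u_1}$. All isotropy groups are discrete, because $G$ is discrete, so $A(G_u^u)=KG_u^u\cong KG_{u_1}^{u_1}$.

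The only step needing care is checking that Corollary~\ref{direct_matrices} (that is, \cite[Proposition~3.1]{Steinberg_ChainConditions_2018}) applies exactly under our hypotheses, namely a finite discrete unit space. This is guaranteed by Remark~\ref{discrete_unitspace}. The rest is bookkeeping.
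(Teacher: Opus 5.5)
Your proof is correct and follows essentially the same route as the paper: $G^{(0)}$ is finite by local finiteness, transitivity comes from Proposition~\ref{justinf_steinb}, and the matrix form comes from Corollary~\ref{direct_matrices} with a single orbit. The one difference is that you obtain transitivity directly from item (4), whereas the paper first uses the decomposition to find an orbit with infinite isotropy and then applies item (2) to force that orbit to be all of $G^{(0)}$; your extra justification that $u$ may be arbitrary (conjugate isotropy groups, and $A(G_u^u)=KG_u^u$ by discreteness) fills in a step the paper leaves implicit.
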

\begin{proof}
    Since $A(G)$ is locally finite, $|G^{(0)}|<\infty$ by Proposition~\ref{finite_fibers}. Thus, any invariant set $W\subseteq G^{(0)}$ is closed. 

    Note that if $W\subsetneq G^{(0)}$ is invariant, then $|G_v^v|<\infty$ for all $v\in W$, by Proposition~\ref{justinf_steinb}. Thus, $\dim A(G_v^v)<\infty$ whenever $v\in W$ by Lemma~\ref{finite_dim_Steinb}.

    By Corollary~\ref{direct_matrices}, $G$ has finitely many orbits, and we can express $A(G)$ as a finite direct sum 
    \begin{equation}\label{eq:direct_sum}
        A(G) = \bigoplus_{i=1}^k M_{n_i}(A(G_{u_i}^{u_i})) 
    \end{equation}
    where $k$ is the number of orbits $\mathcal{O}_i$.  Since $\dim A(G) =\infty$ by assumption, there exists a $j\in \{1,\ldots,k\}$ such that $\dim M_{n_j}(A(G_{u_j}^{u_j}))=\infty$, which implies that $|G_{u_j}^{u_j}|=\infty$. By Proposition~\ref{justinf_steinb}, $G^{(0)}=\overline{\mathcal{O}_{u_j}}=\mathcal{O}_{u_j}$. Hence, $G$ is transitive. 

    Finally, since there is only one orbit, the last statement follows from Equation~\ref{eq:direct_sum}.
\end{proof}

Though graded just infinite and graded simple are equivalent for locally finite Steinberg algebras of strongly effective groupoids, there is no hope for removing ``graded" from the statement. Locally finite and just infinite Steinberg algebras $A(G)$ are never simple. By Proposition~\ref{LFJI_trans}, such algebras have one orbit and thus all isotropy groups are isomorphic, and moreover infinite. In addition, $G$ is discrete by Remark~\ref{discrete_unitspace}. All this implies that such groupoids are always minimal as they are transitive, but $G$ is never effective as there is, up to isomorphism, one infinite and discrete isotropy group. Thus, $A(G)$ is not simple by \cite[Theorem~3.5]{Bensimple}. 

\begin{proposition}\label{transitive}
    Let $c^{-1}(e)$ be strongly effective and $A(G)$ locally finite and infinite dimensional. Then the following are equivalent. 
    \begin{enumerate}
        \item[(i)] $A(G)$ is graded just infinite. 
        \item[(ii)] $A(G)$ is graded simple.
        \item[(iii)] $G$ is transitive. 
    \end{enumerate}
    If $A(G)$ is $\mathbb{Z}$-graded, then the statements (i), (ii), and (iii) above are equivalent to 
    \begin{enumerate}
        \item[(iv)] $A(G)$ is just infinite. 
    \end{enumerate}
\end{proposition}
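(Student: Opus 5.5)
The plan is to run the cycle (i)$\Leftrightarrow$(ii)$\Leftrightarrow$(iii), and then, in the $\mathbb{Z}$-graded case, to prove (iv)$\Rightarrow$(iii) and (iii)$\Rightarrow$(iv).

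\textbf{(i)$\Leftrightarrow$(ii).} This is exactly Corollary~\ref{cor:conj20-from-FIC}, since $c^{-1}(e)$ is strongly effective and $A(G)$ is locally finite and infinite-dimensional.

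\textbf{(ii)$\Leftrightarrow$(iii).} By the graded ideal correspondence (Theorem~\ref{graded_ideals}), $A(G)$ is graded simple if and only if $G$ has no nonempty proper open invariant subsets, that is, $G$ is minimal. By Remark~\ref{discrete_unitspace}, $G^{(0)}$ is finite and discrete. So every subset of $G^{(0)}$ is clopen, and in particular every orbit is an open invariant set. Hence minimality means a single orbit, which is transitivity. Conversely, if $G$ is transitive, the only nonempty invariant subset is $G^{(0)}$.

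\textbf{(iv)$\Rightarrow$(iii).} This is Proposition~\ref{LFJI_trans}.

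\textbf{(iii)$\Rightarrow$(iv), with $H=\mathbb{Z}$.}
\begin{itemize}
\item By Corollary~\ref{direct_matrices} (equivalently Proposition~\ref{LFJI_trans}), transitivity gives $A(G)\cong M_n(KG_u^u)$, where $n=|G^{(0)}|$.
\item Since $A(G)$ is infinite-dimensional and $G^{(0)}$ is finite, $G_u^u$ must be infinite. Otherwise $G$ would be finite, contradicting Lemma~\ref{finite_dim_Steinb}.
\item I next claim that $c$ is injective on isotropy. Since $G^{(0)}$ is closed and invariant, strong effectiveness of $c^{-1}(0)$ makes $c^{-1}(0)$ itself effective. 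Because $G$ is discrete, the isotropy bundle of $c^{-1}(0)$ is open, so it equals its own interior and hence equals $G^{(0)}$. So $c^{-1}(0)$ is principal, and by Proposition~\ref{prop:principal.fiber} $c|_{G_u^u}$ is injective.
\item Therefore $G_u^u$ is isomorphic to an infinite subgroup of $\mathbb{Z}$, hence $G_u^u\cong\mathbb{Z}$. This gives $A(G)\cong M_n(K[x,x^{-1}])$.
\item Ideals of $M_n(R)$ have the form $M_n(I)$ for ideals $I$ of $R$. Since $K[x,x^{-1}]$ is a PID, any nonzero ideal is $(p)$ with $p\neq 0$, and $K[x,x^{-1}]/(p)$ is finite-dimensional over $K$. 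Hence every nonzero ideal of $A(G)$ has finite-codimension quotient $M_n(K[x,x^{-1}]/(p))$, so $A(G)$ is just infinite.
\end{itemize}

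The step I expect to be the main obstacle is identifying $G_u^u\cong\mathbb{Z}$. This is where both the $\mathbb{Z}$-grading and strong effectiveness are used, through discreteness of $G$ and Proposition~\ref{prop:principal.fiber}. For a general group $H$ the isotropy could be an infinite group whose group algebra is not just infinite, so this is precisely why (iv) is only claimed in the $\mathbb{Z}$-graded case.
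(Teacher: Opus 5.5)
Your proof is correct, but it is organized differently from the paper's. The paper proves (i)$\Rightarrow$(iii) by citing Proposition~\ref{LFJI_trans}. Strictly speaking, that proposition is stated for ungraded just infiniteness, so one really needs its graded analogue via Theorem~\ref{graded_justinf_steinb}. The paper then proves (iii)$\Rightarrow$(ii) by the graded ideal correspondence, and settles the $\mathbb{Z}$-graded clause in one line by appealing to \cite[Proposition~2.5]{APMlocal}.

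You instead close (ii)$\Leftrightarrow$(iii) directly from the fact that $G^{(0)}$ is finite and discrete. As a result, Proposition~\ref{LFJI_trans} is used only for (iv)$\Rightarrow$(iii), where its hypotheses match exactly. For (iii)$\Rightarrow$(iv) you compute the algebra explicitly. Strong effectiveness plus discreteness of $G$ makes $c^{-1}(0)$ principal, so $c$ embeds the infinite isotropy group into $\mathbb{Z}$. This gives $A(G)\cong M_n(K[x,x^{-1}])$, whose nonzero ideals $M_n((p))$ visibly have finite codimension.

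The paper's route is shorter but outsources the key step to an external ring-theoretic result. Yours is self-contained and gives an explicit structure theorem for these algebras. It also shows exactly where $H=\mathbb{Z}$ is needed: a single unit with isotropy $\mathbb{Z}^2$, graded by the identity cocycle into $\mathbb{Z}^2$, satisfies (i)--(iii), yet $K[\mathbb{Z}^2]$ is not just infinite.
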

\begin{proof}
    Corollary~\ref{cor:conj20-from-FIC} shows that $(i)\Leftrightarrow (ii)$.  

    Proposition~\ref{LFJI_trans} shows that $(i)\Rightarrow (iii)$. 

    We show that $(iii)\Rightarrow (ii)$. Since $c^{-1}(e)$ is strongly effective, all proper graded ideals in $A(G)$ are given by proper invariant subsets in $G^{(0)}$. However, transitivity implies there are no proper invariant subsets in $G^{(0)}$. Hence, $A(G)$ is graded simple. 

    The last statement follows from \cite[Proposition 2.5]{APMlocal}
\end{proof}

\section{Applications}

\subsection{Deaconu--Renault systems}\label{DRsec}

\begin{definition}
A Deaconu--Renault system is a pair $(X,\sigma)$ where $X$ is a locally compact Hausdorff space
and $\sigma\colon \dom(\sigma)\to X$ is a local homeomorphism defined on an open subset
$\dom(\sigma)\subseteq X$.
For $n\ge 0$ set
\[
\dom(\sigma^0):=X,\qquad \sigma^0:=\id_X,\qquad 
\dom(\sigma^{n+1}):=\{x\in \dom(\sigma^n): \sigma^n(x)\in \dom(\sigma)\},
\]
and define $\sigma^{n+1}(x):=\sigma(\sigma^n(x))$ for $x\in \dom(\sigma^{n+1})$.
\end{definition}

\begin{definition}[Deaconu--Renault groupoid]
Let $(X,\sigma)$ be a Deaconu--Renault system. The \emph{Deaconu--Renault groupoid} $G(X,\sigma)$ is
\[
\begin{aligned}
G(X,\sigma):=\{(x,k,y)\in X\times\mathbb Z\times X:\ &\exists\, m,n\ge 0 \text{ with } k=m-n,\\
&x\in\dom(\sigma^m),\ y\in\dom(\sigma^n),\ \sigma^m(x)=\sigma^n(y)\}.
\end{aligned}
\]
The unit space is identified with $X$ via $x\leftrightarrow (x,0,x)$.
The range and source maps are $r(x,k,y)=x$ and $s(x,k,y)=y$, respectively, and multiplication and inverses are
given by
\[
(x,k,y)(y,\ell,z)=(x,k+\ell,z),\qquad (x,k,y)^{-1}=(y,-k,x).
\]
Basic open sets that generate the topology are of the form
\[
Z(U,m,n,V)
:=\{(x,m-n,y)\in G(X,\sigma): x\in U,\ y\in V,\ \sigma^m(x)=\sigma^n(y)\},
\]
where $m,n\ge 0$ and $U\subseteq \dom(\sigma^m)$, $V\subseteq \dom(\sigma^n)$ are open sets such that
$\sigma^m|_U$ and $\sigma^n|_V$ are homeomorphisms.
\end{definition}

\begin{remark}\label{rem:principal_unit_fibre}
The map $c\colon G(X,\sigma)\to\mathbb Z$, $c(x,k,y)=k$, is a continuous cocycle (the \emph{degree cocycle})
and induces the standard $\mathbb Z$--grading on the Steinberg algebra $A(G(X,\sigma))$. Note that $c$ restricted to each isotropy group is injective. Hence, by Proposition~\ref{prop:principal.fiber}, for Deaconu-Renault groupoids with the natural $\mathbb{Z}$-grading, $c^{-1}(0)$ is always principal, and thus strongly effective. 
\end{remark}

\noindent\textbf{Standing assumption.} From now on $X$ is also assumed to be totally disconnected. Henceforth, $G(X,\sigma)$ is an ample groupoid.

\medskip

As Example~\ref{matrix.example} shows, an analogue of \cite[Theorem~1.4]{APMlocal} does not hold for arbitrary Steinberg algebras. We now give an example showing that it may fail even in the setting of Deaconu--Renault groupoids.

\begin{example}
    Let $X=\nn$ with the discrete topology. Define $\sigma:\dom(\sigma)\to X$ by $\dom(\sigma):=\{0\}$ and $\sigma(0)=0$. Then we have that $c^{-1}(0)=G(X,\sigma)^{(0)}$ and, for every $n\in\zn\setminus\{0\}$, $c^{-1}(n)=\{(0,n,0)\}$. Hence, by Proposition~\ref{finite_fibers}, $A(G(X,\sigma))_0$ is infinite dimensional and $A(G(X,\sigma))_n$ has dimension 1 for all $n\in\zn\setminus\{0\}$.
\end{example}

Nevertheless, we obtain two results that are analogs of \cite[Theorem~1.4]{APMlocal} in the setting of Steinberg algebras of ample Deaconu--Renault groupoids. This first gives a condition that insures if one homogeneous component is infinite dimensional then all homogeneous components are infinite dimensional. The second  gives necessary and sufficient conditions for all homogeneous components to be infinite dimensional. With the second result we can recover \cite[Theorem~1.4]{APMlocal} (see Theorem~\ref{LPA_Thm1.4}).

\begin{proposition}\label{ex:DR-groupoid}
    Let $G(X,\sigma)$ be an ample Deaconu-Renault groupoid with $\sigma$ surjective, $\dom \sigma = X$, and with the standard $\mathbb Z$-grading.  If $A(G(X,\sigma))_h$ is infinite-dimensional for some $h \in \operatorname{im}(c)$,
    then $A(G(X,\sigma))_k$ is infinite-dimensional for every $k \in \operatorname{im}(c)$. 
\end{proposition}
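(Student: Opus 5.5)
The plan is to reduce the statement to Proposition~\ref{prop:inf-dim-transfer-finite-family}. By that result it suffices to check that $s(c^{-1}(k))=X$ for every $k\in\operatorname{im}(c)$, where $X=G(X,\sigma)^{(0)}$. Once this is verified, the hypothesis that $A(G(X,\sigma))_h$ is infinite-dimensional for some $h\in\operatorname{im}(c)$ transfers directly to every $k\in\operatorname{im}(c)$. In fact I expect the surjectivity condition to hold for every $k\in\mathbb Z$, so $\operatorname{im}(c)=\mathbb Z$ whenever $X\neq\emptyset$.

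Fix $y\in X$ and $k\in\mathbb Z$; we need some $x\in X$ with $(x,k,y)\in G(X,\sigma)$.

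\emph{Case $k\ge 0$.} Since $\sigma$ is surjective with $\dom\sigma=X$, every power $\sigma^k$ is defined everywhere and surjective. Choose $x$ with $\sigma^k(x)=y$. Then $\sigma^k(x)=\sigma^0(y)$, so $(x,k-0,y)=(x,k,y)\in G(X,\sigma)$ with $m=k$ and $n=0$.

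\emph{Case $k<0$.} Put $x=\sigma^{-k}(y)$, which is defined because $\dom\sigma=X$. Then $\sigma^0(x)=\sigma^{-k}(y)$, so $(x,0-(-k),y)=(x,k,y)\in G(X,\sigma)$.

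In both cases $y\in s(c^{-1}(k))$. Since $y$ was arbitrary, $s(c^{-1}(k))=X$ for all $k$, and Proposition~\ref{prop:inf-dim-transfer-finite-family} gives the claim.

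There is no real obstacle. The one point requiring care is that the two hypotheses play different roles. Surjectivity of $\sigma$ is what allows preimages in the case $k\ge0$. The condition $\dom\sigma=X$ guarantees that forward iterates exist in the case $k<0$, and that $\sigma^k$ is defined on all of $X$ in the case $k\ge 0$.
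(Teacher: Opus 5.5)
Your proof is correct and takes the same route as the paper: both reduce to Proposition~\ref{prop:inf-dim-transfer-finite-family} by showing $s(c^{-1}(k))=X$, and both use surjectivity of $\sigma^k$ for $k\ge 0$. The only cosmetic difference is at $k<0$: you exhibit $(\sigma^{|k|}(y),k,y)$ directly, whereas the paper first shows $r(c^{-1}(m))=X$ for $m>0$ via $(x,m,\sigma^m(x))$ and then applies inversion.
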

\begin{proof}
We claim that $G(X,\sigma)$ satisfies the hypothesis of
Proposition~\ref{prop:inf-dim-transfer-finite-family} from which the conclusion immediately follows.

Fix $n\ge 0$. For each $x\in X$, since $\sigma^n$ is defined on all of $X$, we have $
(x,n,\sigma^n(x))\in G(X,\sigma),$
and hence $x\in r(c^{-1}(n))$. Thus $r(c^{-1}(n))=X$.
Also, since $\sigma$ is surjective, so is $\sigma^n$, and therefore for every $x\in X$
there exists $y\in X$ such that $\sigma^n(y)=x$. Then $
(y,n,x)\in G(X,\sigma),$
so $x\in s(c^{-1}(n))$. Hence $s(c^{-1}(n))=X$.

For $n<0$, write $n=-m$ with $m>0$. Since inversion interchanges source and range and sends
$c^{-1}(m)$ onto $c^{-1}(-m)$, it follows from the case $m>0$ that $
s(c^{-1}(n))=X=r(c^{-1}(n)).$

It follows from Proposition~\ref{prop:inf-dim-transfer-finite-family} that if one homogeneous component
$A(G(X,\sigma))_n$ is infinite-dimensional for some $n\in\mathbb Z$,
then every homogeneous component $A(G(X,\sigma))_m$ is infinite-dimensional for every $m\in\mathbb Z$.
\end{proof}

\begin{proposition}\label{prop:all.fibers.infinite.dimensional}
Let $(X,\sigma)$ be a Deaconu-Renault system. Then $A(G(X,\sigma))_k$ is infinite-dimensional for every $k \in \zn$ if and only if for every $n\in\nn$ such that $|\dom\sigma^n|<\infty$ there exist $x\in \dom\sigma^{n}$ and $k\in\nn$ such that $|\sigma^{-k}(\sigma^{n+k}(x))|=\infty$.
\end{proposition}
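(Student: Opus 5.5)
By Proposition~\ref{finite_fibers}, $A(G(X,\sigma))_k$ is infinite-dimensional if and only if the fibre $c^{-1}(k)$ is infinite. Inversion maps $c^{-1}(k)$ bijectively onto $c^{-1}(-k)$. So it suffices to prove, for each fixed $n\in\nn$, that $c^{-1}(n)$ is infinite if and only if either $|\dom\sigma^n|=\infty$ or there are $x\in\dom\sigma^n$ and $k\in\nn$ with $|\sigma^{-k}(\sigma^{n+k}(x))|=\infty$. Taking this over all $n$ gives exactly the stated equivalence. Throughout, unwinding the definition (with $m=j+n$), we use
\[
c^{-1}(n)=\bigcup_{j\ge 0}\{(x,n,y): x\in\dom\sigma^{n+j},\ y\in\sigma^{-j}(\sigma^{n+j}(x))\}.
\]

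For sufficiency, I take $j=0$. Every $x\in\dom\sigma^n$ gives the element $(x,n,\sigma^n(x))\in c^{-1}(n)$, and distinct $x$ give distinct elements. Hence $|\dom\sigma^n|=\infty$ forces $c^{-1}(n)$ to be infinite. If instead $x\in\dom\sigma^n$ and $k$ satisfy $|\sigma^{-k}(\sigma^{n+k}(x))|=\infty$, then $\sigma^{n+k}(x)$ is defined, so $x\in\dom\sigma^{n+k}$. Each $y$ in that infinite preimage gives a distinct element $(x,n,y)\in c^{-1}(n)$.

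For necessity, assume $|\dom\sigma^n|<\infty$ and that $c^{-1}(n)$ is infinite. The first coordinate $x$ of any element of $c^{-1}(n)$ lies in $\dom\sigma^{n+j}\subseteq\dom\sigma^n$, which is finite. So some fixed $x$ is paired with infinitely many $y$, that is, the union $\bigcup_{j\ge0}\sigma^{-j}(\sigma^{n+j}(x))$ is infinite.

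This is the step I expect to be the main obstacle: an infinite union of sets need not have an infinite member. The key observation is that for $j\ge n$ every such $y$ lies in $\dom\sigma^j\subseteq\dom\sigma^n$, which is finite. Therefore the part of the union with $j\ge n$ is contained in a finite set. The part with $j<n$ is a union of finitely many sets, namely $j=0,\dots,n-1$. Since the whole union is infinite, one of these finitely many preimages $\sigma^{-j}(\sigma^{n+j}(x))$ with $j<n$ must be infinite. Taking $k=j$ gives the required $x$ and $k$.

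It remains to handle negative degrees. By inversion, $c^{-1}(-n)$ is infinite if and only if $c^{-1}(n)$ is, so the criterion for degree $n\ge 0$ also settles degree $-n$. Combining the two directions over all $n\in\nn$ proves the proposition.
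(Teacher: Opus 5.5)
Your proof is correct and takes the same route as the paper. You reduce to infiniteness of $c^{-1}(n)$ for $n\ge 0$ via Proposition~\ref{finite_fibers} and inversion, use the elements $(x,n,\sigma^n(x))$ or $(x,n,y)$ for sufficiency, and pigeonhole on the finitely many first coordinates for necessity. Your observation that for $j\ge n$ the relevant $y$ lie in the finite set $\dom\sigma^n$ is a cleaner, explicit justification of the step the paper states tersely when it picks a single $k$.
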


\begin{proof}
    We begin by noting that if $m\leq n$, then $\dom\sigma^n\scj\dom\sigma^m$.

    $(\Rightarrow)$ By Proposition~\ref{finite_fibers}, $|c^{-1}(n)|=\infty$ for all $n\in\zn$. Let $n\in\nn$ be such that $|\dom\sigma^n|<\infty$. Because $|c^{-1}(n)|=\infty$, there exists $x\in X$ such that
    \[|\{y\in X:(x,n,y)\in G(X,\sigma)\}|=\infty.\]
    Note that $x\in\dom\sigma^{n}$. Because $\operatorname{im}(\sigma^{n})$ is finite, there exists $k\in\nn$ such that
    \[|\{y\in X:y\in\dom\sigma^k,\ \sigma^{n+k}(x)=\sigma^k(y)\}|=\infty.\]
    Hence $|\sigma^{-k}(\sigma^{n+k}(x))|=\infty$.

\medskip

    $(\Leftarrow)$ The hypothesis for the converse implies that $X$ is infinite and hence $c^{-1}(0)$ is infinite. Because $c^{-1}(-n)=(c^{-1}(n))^{-1}$ for every $n\in\nn$, it is enough to prove that $c^{-1}(n)$ is infinite for all $n\in\nn\setminus\{0\}$. Note that if $\dom\sigma^n$ is infinite, then $\{(x,n,\sigma^n(x)):x\in \dom\sigma^n\}$ is infinite and contained in $c^{-1}(n)$. Otherwise, let $x$ and $k$ be such that $|\sigma^{-k}(\sigma^{n+k}(x))|=\infty$. Then the set $\{(x,n,y):\sigma^{n+k}(x)=\sigma^k(y)\}$ is infinite and contained in $c^{-1}(n)$.
\end{proof}

\begin{remark}
Proposition~\ref{ex:DR-groupoid} applies, in particular, to graph groupoids arising from finite directed graphs
with no sinks and no sources. Indeed, if $E$ is such a graph and $X=\partial E$ is its boundary path space,
then the shift map
is a surjective local homeomorphism. The assumption that $E$ has no sinks ensures that $\sigma$ is defined
on all of $\partial E$, while the assumption that $E$ has no sources ensures that $\sigma$ is surjective.
Moreover, since $E$ is finite, $\partial E$ is compact and totally disconnected. Thus $(\partial E,\sigma)$
is a Deaconu--Renault system of the type considered in Proposition~\ref{ex:DR-groupoid}, and the associated
Deaconu--Renault groupoid is canonically isomorphic to the boundary path groupoid $G_E$.

\end{remark}

Below we recover \cite[Theorem~1.4]{APMlocal}, with an alternative proof.

\begin{theorem}\label{LPA_Thm1.4}\cite[Theorem~1.4]{APMlocal}
    Let $E$ be a finite graph. The following are equivalent:
    \begin{enumerate}
        \item $L_K(E)_n$ is infinite dimensional for some $n\in\zn$.
        \item $L_K(E)_n$ is infinite dimensional for every $n\in\zn$.
        \item There exists a cycle in $E$ with an exit.
    \end{enumerate}
\end{theorem}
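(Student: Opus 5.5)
The plan is to pass through the graded isomorphism $L_K(E)\cong A_K(G_E)$ between the Leavitt path algebra with its canonical $\mathbb{Z}$-grading and the Steinberg algebra of the boundary path groupoid with the degree cocycle. This is a standard result of Clark and Sims, which I would cite rather than reprove. For a finite graph, $G_E$ is canonically the Deaconu--Renault groupoid $G(\partial E,\sigma)$, where $\sigma$ is the shift defined on $\dom\sigma=\partial E\setminus\{\text{sinks}\}$; there are no infinite emitters, so the only singular vertices are sinks. Since $E$ is finite, $\partial E$ is compact and totally disconnected, so the standing assumptions hold. By Proposition~\ref{finite_fibers}, $\dim L_K(E)_n=|c^{-1}(n)|$. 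The statement therefore reduces to counting degree fibres of $G(\partial E,\sigma)$.

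The implication (2)$\Rightarrow$(1) is trivial.

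For (3)$\Rightarrow$(2), fix a cycle $\mu$ with an exit edge $f$ at some vertex on $\mu$, and choose a path $\mu'$ along $\mu$ from $s(\mu)$ to $s(f)$. From $r(f)$, extend to a boundary path $\nu$, which is possible in a finite graph: keep going until a sink is reached, or forever. The boundary paths $x_m=\mu^m\mu' f\nu$ are pairwise distinct and have length at least $m|\mu|$. Hence $\dom\sigma^n\supseteq\{x_m: m|\mu|>n\}$ is infinite for every $n\in\nn$. The criterion of Proposition~\ref{prop:all.fibers.infinite.dimensional} then holds vacuously, since there is no $n$ with $|\dom\sigma^n|<\infty$. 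So every homogeneous component is infinite dimensional. Alternatively, one sees directly that $\{(x,n,\sigma^n(x)) : x\in\dom\sigma^n\}\subseteq c^{-1}(n)$ is infinite for $n\ge 0$, and then uses $c^{-1}(-n)=c^{-1}(n)^{-1}$.

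For (1)$\Rightarrow$(3), I argue by contraposition: assume no cycle has an exit, and show that $\partial E$ is finite. A boundary path is either a finite path ending at a sink or an infinite path. In the finite case, the path cannot revisit a vertex, since the revisit would form a cycle from which the path exits towards the sink. Hence it has length at most $|E^0|$. In the infinite case, the path must revisit some vertex $v$, so it contains a cycle through $v$. That cycle has no exit, so from $v$ on the path is forced to wind around it forever. The initial segment before the first repeated vertex has length at most $|E^0|$. Thus there are finitely many finite boundary paths and finitely many infinite ones, and $\partial E$ is finite. Then $c^{-1}(n)\subseteq\partial E\times\{n\}\times\partial E$ is finite for every $n$, so every $L_K(E)_n$ is finite dimensional, contradicting (1).

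The main obstacle I expect is the bookkeeping in this last step: checking carefully that an infinite path is eventually confined to an exitless cycle, and being precise about the identification of $\partial E$ and $\dom\sigma$ for graphs with sinks. The rest follows directly from the earlier propositions.
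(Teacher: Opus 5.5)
Your proposal is correct and follows essentially the same route as the paper. For (3)$\Rightarrow$(2) you exhibit infinitely many boundary paths $\mu^m\mu' f\nu$ in each $\dom\sigma^n$ and invoke Proposition~\ref{prop:all.fibers.infinite.dimensional}; for (1)$\Rightarrow$(3) you argue by contraposition, showing $\partial E$ is finite when no cycle has an exit and then applying Proposition~\ref{finite_fibers}. Your pigeonhole argument for the finiteness of $\partial E$ supplies a step the paper only asserts.
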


\begin{proof}
    (2) $\Rightarrow$ (1) is trivial.

    (3) $\Rightarrow$ (2) Let $c=c_1\ldots c_m$ be a cycle based at vertex $v$ from where an exit exists. Let $e\in s^{-1}(v)\setminus \{c_1\}$ be an exit and $\alpha\in\partial E$ be such that $s(\alpha)=r(e)$. Then for a given $n\in\nn$, we obtain the following infinite set in the domain of $\sigma^n$: $\{c^{n+l}e\alpha:l\in\nn\}$. By Proposition~\ref{prop:all.fibers.infinite.dimensional}, $L_K(E)_k$ is infinite dimensional for every $k\in\zn$.

    (1) $\Rightarrow$ (3) We prove this implication via the contrapositive. In the case that there is no cycle in $E$ with an exit, the finiteness of  $E$ implies that $\partial E$ is finite. Given $n\in\zn$, since $c^{-1}(n)\scj \partial E\times \{n\} \times \partial E$, we have that $c^{-1}(n)$ is finite. By Proposition~\ref{finite_fibers}, $L_K(E)_n$ is finite dimensional.
\end{proof}

Graded local finiteness does not imply Noetherianity for Steinberg algebras in general.
Indeed, let $X$ be a nonempty finite set and let
\[
H=\bigoplus_{n\in \mathbb N}\mathbb Z.
\]
Set $G:=X\times H$, with composition defined by
\[
(x,h)(y,k) \text{ is defined if and only if } x=y,
\quad\text{in which case}\quad
(x,h)(x,k)=(x,hk).
\]
Thus $G^{(0)}$ is identified with $X$, and each isotropy group is isomorphic to $H$.
Let $c:G\to H$ be the projection onto the second coordinate. Then, for each $h\in H$, $
c^{-1}(h)=X\times\{h\},$
so $c^{-1}(h)$ is finite. Hence $A(G)$ is graded locally finite by
Theorem~\ref{Loc_finite_Steinb}.

On the other hand, since $X$ is finite, we have $
A(G)\cong \bigoplus_{x\in X} KH.$
Now $
KH \cong K[t_1^{\pm1},t_2^{\pm1},\dots],$
which is not Noetherian, since the ascending chain of ideals
\[
(t_1-1)\subsetneq (t_1-1,t_2-1)\subsetneq (t_1-1,t_2-1,t_3-1)\subsetneq \cdots
\]
does not stabilize. Therefore $A(G)$ is not Noetherian.

Thus graded local finiteness does not imply Noetherianity for Steinberg algebras in general.
However, for Deaconu--Renault groupoids, Noetherianity is equivalent to graded local finiteness.

\begin{proposition}\label{prop:locally-finite-DR}
   Let $(X,\sigma)$ be a Deaconu-Renault system. The following are equivalent:
    \begin{enumerate}
        \item $X$ is finite,
        \item $A(G(X,\sigma))$ is locally finite,
        \item $A(G(X,\sigma))$ is Noetherian.
    \end{enumerate}
\end{proposition}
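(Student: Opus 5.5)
I will prove the cycle of implications $(1)\Rightarrow(2)\Rightarrow(1)$ and then handle Noetherianity separately, via $(1)\Rightarrow(3)$ and $(3)\Rightarrow(1)$. The grading is the standard $\mathbb Z$-grading by the degree cocycle $c(x,k,y)=k$.

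\emph{Step $(2)\Rightarrow(1)$.} This is immediate from Remark~\ref{discrete_unitspace}: local finiteness forces $G^{(0)}=X$ to be finite, since $X\cong G^{(0)}\subseteq c^{-1}(0)$ and the latter is finite by Theorem~\ref{Loc_finite_Steinb}.

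\emph{Step $(1)\Rightarrow(2)$.} Suppose $X$ is finite. Then for each $k\in\mathbb Z$ the fibre satisfies
\[
c^{-1}(k)\subseteq X\times\{k\}\times X,
\]
which has at most $|X|^2$ elements. Hence every fibre is finite, and Theorem~\ref{Loc_finite_Steinb} gives local finiteness.

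\emph{Step $(1)\Rightarrow(3)$.} Assume $X$ is finite. It is then discrete, since it is Hausdorff, and $G=G(X,\sigma)$ is discrete by Remark~\ref{discrete_unitspace}. By Corollary~\ref{direct_matrices}, or directly by \cite[Proposition~3.1]{Steinberg_ChainConditions_2018}, we get
\[
A(G)\cong\bigoplus_i M_{n_i}\bigl(K G_{u_i}^{u_i}\bigr).
\]
The degree cocycle is injective on isotropy (Remark~\ref{rem:principal_unit_fibre}), so each isotropy group embeds in $\mathbb Z$ and is therefore trivial or infinite cyclic. Consequently each $KG_u^u$ is either $K$ or $K[t,t^{-1}]$, and both are Noetherian. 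Matrix rings over Noetherian rings and finite direct sums of Noetherian rings are Noetherian, so $A(G)$ is Noetherian.

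\emph{Step $(3)\Rightarrow(1)$.} This is the main obstacle. Suppose $X$ is infinite; I will produce a strictly ascending chain of ideals. Because $X$ is infinite, locally compact, Hausdorff and totally disconnected, I would first try to find infinitely many pairwise disjoint nonempty compact open sets $V_1,V_2,\dots$.
\begin{itemize}
\item If $X$ is not compact, exhaust it by compact open sets and take successive differences.
\item If $X$ is compact, split it repeatedly into clopen pieces, using that an infinite compact totally disconnected space has a clopen partition with an infinite piece.
\end{itemize}
Then I would take $I_n$ to be the ideal generated by $1_{V_1},\dots,1_{V_n}$. Each such ideal equals $A(G|_{U_n})$, where $U_n=r(s^{-1}(V_1\cup\dots\cup V_n))$ is the open invariant saturation, by \cite[Proposition~2.9]{CEP}.

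The difficulty is that the saturations may coincide; for instance, $G$ could be minimal. So instead of two-sided ideals I would use left ideals, which is enough provided Noetherian is read as left Noetherian. Set
\[
L_n=A(G)\,(1_{V_1}+\dots+1_{V_n}).
\]
Every element of $L_n$ is a function supported in $s^{-1}(V_1\cup\dots\cup V_n)$, whereas the unit $1_{V_{n+1}}\in L_{n+1}$ has support $V_{n+1}\subseteq G^{(0)}$, which is disjoint from $s^{-1}(V_1\cup\dots\cup V_n)$. Hence $1_{V_{n+1}}\notin L_n$, and the chain $L_1\subsetneq L_2\subsetneq\cdots$ is strictly increasing, so $A(G)$ is not Noetherian. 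If the paper intends two-sided Noetherian, I would need a separate argument through the dynamics of $\sigma$ on infinite $X$, for example using a point with infinite orbit to build infinitely many distinct invariant open sets. That step is where I expect the real difficulty to lie.
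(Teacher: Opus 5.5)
Your proof is correct. For $(1)\Leftrightarrow(2)$ it is the same as the paper's. The two arguments differ only in the Noetherian part.

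The paper settles both $(1)\Rightarrow(3)$ and $(3)\Rightarrow(1)$ by citing \cite[Theorem~1.1]{Steinberg_ChainConditions_2018}, Steinberg's characterization of Noetherian groupoid algebras (finite unit space and Noetherian isotropy group algebras). It adds only the observation that the isotropy groups of $G(X,\sigma)$ are trivial or $\mathbb Z$.

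For $(1)\Rightarrow(3)$ you instead use the matrix decomposition of Corollary~\ref{direct_matrices} together with elementary ring theory. For $(3)\Rightarrow(1)$ you reprove the easy half of Steinberg's theorem directly. The idempotents $1_{V_n}$ are pairwise orthogonal, so with $p_n=1_{V_1\cup\cdots\cup V_n}$ the chain $A(G)p_1\subsetneq A(G)p_2\subsetneq\cdots$ never stabilises. The containments hold because $p_n=p_np_{n+1}$. Your route is self-contained and shows more generally that $A(G)$ is never left Noetherian when $G^{(0)}$ is infinite, for any ample groupoid. The paper's route is shorter but rests entirely on the reference.

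There is one small imprecision. In the noncompact case you cannot in general ``exhaust'' $X$ by a sequence of compact open sets, since no $\sigma$-compactness is assumed. You only need a strictly increasing sequence $K_1\subsetneq K_2\subsetneq\cdots$ of compact open sets. This always exists: $X\neq K_n$, and compact open sets are clopen, so a compact open neighbourhood of a point outside $K_n$ gives a nonempty compact open set disjoint from $K_n$.

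Your worry about the two-sided reading is unfounded. ``Noetherian'' here is the one-sided notion used in Steinberg's theorem. Left and right Noetherian coincide for $A(G)$, because $f\mapsto f^*$ with $f^*(\gamma)=f(\gamma^{-1})$ is an anti-automorphism. Your left-ideal chain therefore settles the matter.

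Under the reading ``ACC on two-sided ideals'', $(3)\Rightarrow(1)$ would actually be false. For the full one-sided shift on two symbols, $A(G(X,\sigma))$ is the Leavitt algebra $L_K(1,2)$, which is simple, so it trivially has ACC on two-sided ideals, yet $X$ is a Cantor set. So the dynamical fallback you sketch, building infinitely many distinct invariant open sets from a point with infinite orbit, could not work in general; minimal systems have no proper nonempty invariant open sets.
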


\begin{proof}
(1) $\Rightarrow$ (2) For each $n\in\zn$, there exist at most $|X|^2$ elements in $c^{-1}(n)$, so $A(G(X,\sigma))$ is locally finite by Theorem~\ref{Loc_finite_Steinb}.

(2) $\Rightarrow$ (1) Since $X\scj c^{-1}(0)$, we have that $X$ is finite by Theorem~\ref{Loc_finite_Steinb}.

(3) $\Rightarrow$ (1) It is immediate from \cite[Theorem~1.1]{Steinberg_ChainConditions_2018}.

(1) $\Rightarrow$ (3) For $G(X,\sigma)$, each isotropy group is either trivial or isomorphic to $\zn$, so its group algebra is Noetherian. The result then follows from \cite[Theorem~1.1]{Steinberg_ChainConditions_2018}.
\end{proof}

\begin{proposition}\label{prop:DR.finite}
    Let $(X,\sigma)$ be a Deaconu-Renault system. The following are equivalent:
    \begin{enumerate}
        \item $X$ is finite and $\dom(\sigma^{|X|})=\emptyset$,
        \item $G(X,\sigma)$ is finite,
        \item $A(G(X,\sigma))$ is Artinian,
        \item $A(G(X,\sigma))$ is semisimple.
    \end{enumerate}
\end{proposition}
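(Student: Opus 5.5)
I will prove the cycle (1)$\Rightarrow$(2)$\Rightarrow$(4)$\Rightarrow$(3)$\Rightarrow$(1).

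\textbf{(1)$\Rightarrow$(2).} Suppose $X$ is finite with $|X|=N$ and $\dom(\sigma^N)=\emptyset$. I first show that there are no periodic points. If $\sigma^p(x)=x$ for some $p\ge1$, then $x\in\dom(\sigma^{kp})$ for every $k$, and in particular $x\in\dom(\sigma^N)$, a contradiction. Every element $(x,m-n,y)$ of $G(X,\sigma)$ requires $x\in\dom(\sigma^m)$ and $y\in\dom(\sigma^n)$, so $m,n<N$. Hence $|k|<N$, and $G\subseteq X\times\{-N+1,\dots,N-1\}\times X$ is finite.

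\textbf{(2)$\Rightarrow$(4).} If $G$ is finite, it is discrete, and Corollary~\ref{direct_matrices} (or \cite[Proposition~3.1]{Steinberg_ChainConditions_2018} directly) gives $A(G)\cong\bigoplus_i M_{n_i}(KG_{u_i}^{u_i})$. The isotropy groups of $G(X,\sigma)$ are trivial or isomorphic to $\zn$, so they are trivial here since $G$ is finite. Thus $A(G)$ is a finite direct sum of matrix algebras over $K$, which is semisimple.

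\textbf{(4)$\Rightarrow$(3).} A semisimple ring in the Wedderburn sense is Artinian. If the paper's notion of semisimple is the non-unital one, I would instead route through (2): semisimple plus unital forces a finite direct sum, and otherwise I would argue via (3)$\Rightarrow$(1) with the Artinian condition replaced by semisimplicity.

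\textbf{(3)$\Rightarrow$(1).} This is the main step. I would invoke \cite[Theorem~1.1]{Steinberg_ChainConditions_2018}, which states that $A_K(G)$ is Artinian exactly when $G^{(0)}$ is finite and each isotropy group algebra is Artinian, i.e.\ each isotropy group is finite. So $X$ is finite and every isotropy group of $G(X,\sigma)$ is trivial. The isotropy at $x$ contains $(x,p,x)$ with $p\neq0$ whenever $x$ is eventually periodic, that is, whenever $\sigma^{m}(x)=\sigma^{m+p}(x)$ for some $m$ and $p\ge1$. Hence no point is eventually periodic.

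Now suppose $x\in\dom(\sigma^{N})$ with $N=|X|$. The $N+1$ points $x,\sigma(x),\dots,\sigma^N(x)$ lie in a set of size $N$, so two coincide, say $\sigma^i(x)=\sigma^j(x)$ with $i<j$. Then $x$ is eventually periodic, a contradiction. Therefore $\dom(\sigma^{|X|})=\emptyset$.

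The main obstacle is correctly applying Steinberg's Artinian criterion; I expect it needs $G^{(0)}$ finite and finite isotropy. If that theorem only gives finiteness of $G^{(0)}$, I would argue directly instead: with $X$ finite, $A(G)\cong\bigoplus M_{n_i}(KG_{u_i}^{u_i})$, and $K\zn=K[t,t^{-1}]$ is not Artinian because of the chain $(t-1)\supsetneq((t-1)^2)\supsetneq\cdots$.
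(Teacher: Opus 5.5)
Your proof is correct and uses the same ingredients as the paper's: the degree bound $|k|<|X|$ for (1)$\Rightarrow$(2), \cite[Theorem~1.1]{Steinberg_ChainConditions_2018} for the Artinian direction, and the pigeonhole argument that yields an eventually periodic point and hence a copy of $\zn$ in an isotropy group. The paper instead proves (1)$\Leftrightarrow$(2) and cites Steinberg's theorem for (2)$\Leftrightarrow$(3) and (2)$\Leftrightarrow$(4), whereas you close a single cycle and get (2)$\Rightarrow$(4) from the matrix decomposition with trivial isotropy and (4)$\Rightarrow$(3) from Wedderburn theory.

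You can drop your hedge about a non-unital notion of semisimplicity. The statement is only true under the Wedderburn (semisimple Artinian) reading: $X=\nn$ discrete with $\dom(\sigma)=\emptyset$ gives $A(G(X,\sigma))\cong\bigoplus_{\nn}K$, which is a direct sum of minimal ideals with zero Jacobson radical but is not Artinian.
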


\begin{proof}
    (2) $\Leftrightarrow$ (3) and (4) $\Rightarrow$ (2) follow immediately from \cite[Theorem~1.1]{Steinberg_ChainConditions_2018}. Assuming that $G(X,\sigma)$ is finite, each isotropy group is trivial, so (2) $\Rightarrow$ (4) again follows from \cite[Theorem~1.1]{Steinberg_ChainConditions_2018}.

    (1) $\Rightarrow$ (2) Under the assumptions of (1), we have that
    \[G(X,\sigma)\scj X\times\{n\in\zn:|n|\leq |X|\}\times X\] and the latter is finite.

    (2) $\Rightarrow$ (1) Assume that there exists $x\in \dom(\sigma^{|X|})$. By the pigeonhole principle, the set $\{x,\sigma(x),\ldots,\sigma^{|X|}(x)\}$ contains repeated elements. Thus, there exists $y\in X$ and $0< n\leq |X|$ such that $\sigma^n(y)=y$. This implies that the isotropy group of $y$ is isomorphic to $\zn$, contradicting the finiteness of $G(X,\sigma)$.
\end{proof}

With little extra work we can also characterize more general chain conditions for $A(G(X,\sigma))$. For that, we use the results of \cite{Philip2026Categorical} for Steinberg algebras.

\begin{proposition}\label{prop:cat.noether}
    Let $(X,\sigma)$ be a Deaconu-Renault system. The following are equivalent:
    \begin{enumerate}
        \item $X$ is discrete,
        \item $G(X,\sigma)$ is discrete,
        \item $A(G(X,\sigma))$ is categorically Noetherian,
        \item $A(G(X,\sigma))$ is locally Noetherian.
    \end{enumerate} 
\end{proposition}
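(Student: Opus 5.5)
The plan is to prove $(1)\Leftrightarrow(2)$ directly from the definition of the topology on $G(X,\sigma)$. Then $(2)\Leftrightarrow(3)$ and $(2)\Leftrightarrow(4)$ will come from the characterizations in \cite{Philip2026Categorical}, which describe categorically Noetherian and locally Noetherian Steinberg algebras as (presumably) those of discrete groupoids whose isotropy group algebras are Noetherian.

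For $(2)\Rightarrow(1)$: the unit space $X$ is identified with the subspace $\{(x,0,x)\}$ of $G(X,\sigma)$. A subspace of a discrete space is discrete, so $X$ is discrete.

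For $(1)\Rightarrow(2)$: take $(x,m-n,y)\in G(X,\sigma)$ with $\sigma^m(x)=\sigma^n(y)$. Since $X$ is discrete, $U=\{x\}$ and $V=\{y\}$ are open. Moreover $\sigma^m|_U$ and $\sigma^n|_V$ are trivially homeomorphisms onto their images, which are singletons and hence open. So $Z(\{x\},m,n,\{y\})$ is a basic open set. It consists of the triples $(x',m-n,y')$ with $x'=x$, $y'=y$, so it equals $\{(x,m-n,y)\}$. Thus singletons are open and $G(X,\sigma)$ is discrete.

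For the algebraic equivalences, I would invoke the main result of \cite{Philip2026Categorical}. I expect it to say that $A_K(G)$ is categorically Noetherian (respectively locally Noetherian) if and only if $G$ is discrete and every isotropy group algebra $KG_u^u$ is Noetherian, possibly with additional hypotheses. As already used in the proof of Proposition~\ref{prop:locally-finite-DR}, every isotropy group of $G(X,\sigma)$ is trivial or isomorphic to $\zn$. Hence every $KG_u^u$ is either $K$ or $K[t,t^{-1}]$, both Noetherian, and the isotropy condition holds automatically. This gives $(2)\Leftrightarrow(3)$ and $(2)\Leftrightarrow(4)$.

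The main obstacle is matching the exact hypotheses of \cite{Philip2026Categorical}. If its characterization only provides that categorically Noetherian implies locally Noetherian, and locally Noetherian implies discreteness, then I would chain $(3)\Rightarrow(4)\Rightarrow(2)\Rightarrow(3)$. The step $(2)\Rightarrow(3)$ would then use that, for a discrete groupoid, $A(G)$ is a directed union of the algebras $A(G|_F)$ over finite invariant-free pieces. Each such piece is a finite sum of matrix algebras over Noetherian group algebras, in the style of Corollary~\ref{direct_matrices}.
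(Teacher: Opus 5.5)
Your proposal is correct and follows the paper's proof. The only difference is in $(1)\Rightarrow(2)$: the paper deduces it from Remark~\ref{discrete_unitspace} (an \'etale groupoid with discrete unit space is discrete), whereas you compute directly that $Z(\{x\},m,n,\{y\})=\{(x,m-n,y)\}$; the remaining equivalences $(2)\Leftrightarrow(3)\Leftrightarrow(4)$ come from \cite[Theorem~5.1]{Philip2026Categorical} in both, and your explicit check that every isotropy group algebra is $K$ or $K[t,t^{-1}]$, hence Noetherian, spells out a hypothesis the paper leaves implicit here and states only in the proof of Proposition~\ref{prop:locally-finite-DR}.
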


    \begin{proof}
        Since, $G(X,\sigma)^{(0)}$ is homeomorphic to $\osf$, if $\osf$ is discrete then $G(X,\sigma)^{(0)}$ is discrete, which in turn implies that $G(X,\sigma)$ is discrete (see Remark~\ref{discrete_unitspace}). 

        Conversely, if $G(X,\sigma)$ is discrete, then so is $G(X,\sigma)^{(0)}\cong \osf$.

        The equivalence between (2), (3) and (4) follows from \cite[Theorem~5.1]{Philip2026Categorical}.
    \end{proof}

\begin{proposition}
    Let $(X,\sigma)$ be a Deaconu-Renault system. The following are equivalent:
    \begin{enumerate}
        \item $X$ is discrete and $\sigma$ has no eventually periodic points,
        \item $A(G(X,\sigma))$ is categorically Artinian,
        \item $A(G(X,\sigma))$ is locally Artinian.
    \end{enumerate}
\end{proposition}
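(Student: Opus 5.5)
The plan is to follow the pattern of Proposition~\ref{prop:cat.noether}, reducing everything to groupoid conditions via \cite{Philip2026Categorical}. For Steinberg algebras, I expect \cite{Philip2026Categorical} to state the Artinian counterpart of \cite[Theorem~5.1]{Philip2026Categorical}. That counterpart should say that $A_K(G)$ is categorically Artinian if and only if it is locally Artinian. It should also say that both hold if and only if $G$ is discrete and every isotropy group $G_u^u$ has an Artinian (equivalently, locally finite-dimensional) group algebra. Since the isotropy groups of $G(X,\sigma)$ are either trivial or infinite cyclic, the condition on isotropy should amount to all isotropy groups being trivial. Indeed, $K\mathbb Z\cong K[t,t^{-1}]$ is not Artinian: the chain $(t-1)\supsetneq((t-1)^2)\supsetneq\cdots$ does not stabilize. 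So I would first invoke that theorem to get (2)$\Leftrightarrow$(3)$\Leftrightarrow$ \textquotedblleft$G(X,\sigma)$ is discrete and has trivial isotropy (i.e.\ is principal)\textquotedblright.

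Next I would translate these two groupoid conditions into conditions on $(X,\sigma)$. Discreteness of $G(X,\sigma)$ is equivalent to discreteness of $X$, exactly as in the proof of Proposition~\ref{prop:cat.noether}, using Remark~\ref{discrete_unitspace}. It remains to show that $G(X,\sigma)$ has trivial isotropy exactly when $\sigma$ has no eventually periodic points. Suppose $x$ is eventually periodic, so $\sigma^{m+p}(x)=\sigma^m(x)$ for some $m\ge 0$ and $p\ge 1$ with $x\in\dom(\sigma^{m+p})$. Then $(x,p,x)\in G(X,\sigma)$ is a nontrivial isotropy element, and its degree is nonzero. Conversely, suppose $(x,k,x)$ is isotropy with $k\neq 0$. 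Write $k=m-n$ with $\sigma^m(x)=\sigma^n(x)$ and, say, $m>n$. Then $\sigma^n(x)$ is periodic, so $x$ is eventually periodic. Isotropy elements of degree $0$ are units, since $c$ is injective on isotropy (Remark~\ref{rem:principal_unit_fibre}), so this covers all nontrivial isotropy.

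The main obstacle is pinning down the exact form of the Artinian criterion in \cite{Philip2026Categorical}. If it is phrased as \textquotedblleft discrete with isotropy groups that are locally finite groups\textquotedblright, then I need that $\mathbb Z$ is not locally finite, which is immediate. If it is instead phrased in terms of the group algebras $KG_u^u$ being (categorically or locally) Artinian, I would check directly that $K[t,t^{-1}]$ fails the relevant condition, using the descending chain above, while $K$ trivially satisfies it. Either way, the rest of the proof is the elementary dynamical translation above.
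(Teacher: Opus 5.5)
Your proposal is correct and matches the paper's proof. Both reduce via \cite[Theorem~5.1]{Philip2026Categorical}, which the paper invokes directly for the Artinian case as well, to discreteness of $G(X,\sigma)$ plus an Artinian condition on the isotropy group algebras. Both then use that the isotropy groups are trivial or $\mathbb{Z}$, that they are nontrivial exactly at eventually periodic points, and that $K\mathbb{Z}$ is not Artinian. Your explicit checks of the eventual-periodicity correspondence and of the strict chain $\bigl((t-1)^n\bigr)$ only fill in details the paper leaves implicit. Your hedge about ``locally finite'' isotropy groups is imprecise: by Connell's theorem $KG$ is Artinian iff $G$ is finite, so the relevant condition is finiteness. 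This is harmless here, since the only isotropy groups that occur are trivial or $\mathbb{Z}$.
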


\begin{proof}
    The equivalence between $X$ being discrete and $G(X,\sigma)$ being discrete is given by Proposition~\ref{prop:cat.noether}.

    Observe that the isotropy groups of $G(X,\sigma)$ are either trivial or isomorphic to $\zn$. Moreover, $G(X,\sigma)$ has a nontrivial isotropy group if and only if $\sigma$ admits an eventually periodic point. Since $K\zn$ is not Artinian, the result follows from \cite[Theorem~5.1]{Philip2026Categorical}.
\end{proof}

 We now proceed to characterizing graded just infinite Deaconu-Renault groupoids. We need the following result.
 
\begin{proposition}\label{prop:restriction-DR}
Let $(X,\sigma)$ be a Deaconu--Renault system, and let $W\subseteq X$ be a closed subset.
Then the following are equivalent:
\begin{enumerate}
\item $W$ is $G(X,\sigma)$-invariant in $G(X,\sigma)^{(0)}$;
\item $\sigma(W)\subseteq W$ and $\sigma^{-1}(W)\subseteq W$ (i.e. $W$  is $(\sigma,\sigma^{-1})$-invariant).
\end{enumerate}
In this case, the restricted system $(W,\sigma|_W)$ is again a Deaconu--Renault system, and
$
G(X,\sigma)|_W \cong G(W,\sigma|_W)$ 
as topological groupoids.
\end{proposition}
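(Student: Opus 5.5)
The plan is to unwind $G(X,\sigma)$-invariance into statements about $\sigma$, then check the restricted system and compare the two groupoids directly. Note that $\sigma^{-1}(W)$ here means $\{x\in\dom(\sigma):\sigma(x)\in W\}$, and $\sigma(W)$ means $\sigma(W\cap\dom(\sigma))$.

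\textbf{(1)$\Rightarrow$(2).} Let $x\in W\cap\dom(\sigma)$. Then $(x,1,\sigma(x))\in G(X,\sigma)$ is an arrow joining $x$ and $\sigma(x)$. Invariance of $W$ under $r(s^{-1}(\cdot))$, together with the inverse arrow, gives $\sigma(x)\in W$. Similarly, if $\sigma(x)\in W$, the same arrow gives $x\in W$.

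\textbf{(2)$\Rightarrow$(1).} Take $(x,k,y)\in G(X,\sigma)$ with $y\in W$, so $\sigma^m(x)=\sigma^n(y)$ with $k=m-n$. Iterating $\sigma(W)\subseteq W$ gives $\sigma^n(y)\in W$. Iterating $\sigma^{-1}(W)\subseteq W$ along $x,\sigma(x),\dots,\sigma^m(x)$ then gives $x\in W$. Hence $r(s^{-1}(W))\subseteq W$, and the reverse inclusion holds via units.

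\textbf{The restricted system.} Here $W$ is closed in the locally compact Hausdorff space $X$, so it is locally compact Hausdorff and totally disconnected. Set $\dom(\sigma|_W)=W\cap\dom(\sigma)$, which is open in $W$; by (2), $\sigma|_W$ maps it into $W$.

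The step I expect to be the main obstacle is checking that $\sigma|_W$ is a local homeomorphism. My plan is as follows. Given $x\in W\cap\dom(\sigma)$, choose an open $U\ni x$ with $\sigma|_U\colon U\to\sigma(U)$ a homeomorphism onto an open set. I claim that
\[
\sigma(U\cap W)=\sigma(U)\cap W.
\]
The inclusion $\subseteq$ follows from $\sigma(W)\subseteq W$. For $\supseteq$, if $z=\sigma(u)\in W$ with $u\in U$, then $u\in\sigma^{-1}(W)\subseteq W$. Thus $\sigma$ restricts to a homeomorphism from the open set $U\cap W$ of $W$ onto the open set $\sigma(U)\cap W$ of $W$. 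This is exactly where backward invariance is essential.

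\textbf{The groupoid isomorphism.} I will show that $G(X,\sigma)|_W$ and $G(W,\sigma|_W)$ are equal as sets. An arrow $(x,k,y)$ with $x,y\in W$ and $\sigma^m(x)=\sigma^n(y)$ has all intermediate iterates in $W$ by forward invariance, so $x\in\dom((\sigma|_W)^m)$ and $y\in\dom((\sigma|_W)^n)$. The converse inclusion is immediate. Multiplication and inversion agree on both sides.

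For the topologies, the basic sets of $G(W,\sigma|_W)$ have the form $Z(U\cap W,m,n,V\cap W)$ with $\sigma^m|_U$ and $\sigma^n|_V$ homeomorphisms. By the displayed claim, iterated, $\sigma^m|_{U\cap W}$ is a homeomorphism onto an open subset of $W$, and
\[
Z(U\cap W,m,n,V\cap W)=Z(U,m,n,V)\cap G|_W.
\]
So these basic sets are open in the relative topology on $G|_W$. Conversely, every relative basic set arises this way, after shrinking $U$ and $V$ to homeomorphism neighbourhoods. Hence the two topologies coincide, and the identity map is an isomorphism of topological groupoids.
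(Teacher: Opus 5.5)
Your proof is correct. For the equivalence (1)$\Leftrightarrow$(2) and the set-level equality $G(X,\sigma)|_W=G(W,\sigma|_W)$, it follows the same argument as the paper; you only add the check that forward invariance keeps the intermediate iterates in $W$.

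The last part is where your route differs. The paper gets that $(W,\sigma|_W)$ is a Deaconu--Renault system by citing Proposition~2.7 of \cite{GoncalvesRoyerTascaNonwandering}, and it simply asserts that the topologies agree. You prove both directly. Your identity $\sigma(U\cap W)=\sigma(U)\cap W$, and its analogue for $\sigma^m$, shows exactly where backward invariance is needed. It gives the local homeomorphism property of $\sigma|_W$. It also gives the equality $Z(U\cap W,m,n,V\cap W)=Z(U,m,n,V)\cap G(X,\sigma)|_W$, which identifies the two topologies. The paper's version is shorter, while yours is self-contained.

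One small imprecision. Not every basic open set of $G(W,\sigma|_W)$ is literally of the form $Z(U\cap W,m,n,V\cap W)$ with $\sigma^m|_U$ and $\sigma^n|_V$ homeomorphisms on open subsets of $X$. However, sets of this form make up a basis, by the same shrinking you use for the converse inclusion, and that is all the argument requires.
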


\begin{proof}
Assume first that $W$ is $G(X,\sigma)$-invariant.
If $x\in W\cap \dom(\sigma)$, then $(x,1,\sigma(x))\in G(X,\sigma)$, so $\sigma(x)\in W$. 
Thus $\sigma(W)\subseteq W$.
If $x\in \sigma^{-1}(W)$, then $x\in \dom(\sigma)$ and $\sigma(x)\in W$.
Since $(\sigma(x),-1,x)\in G(X,\sigma)$ and $W$ is invariant, we obtain $x\in W$.
Hence $\sigma^{-1}(W)\subseteq W$, so $W$ is $(\sigma,\sigma^{-1})$-invariant.

Conversely, assume that $W$ is $(\sigma,\sigma^{-1})$-invariant.
Let $(x,k,y)\in G(X,\sigma)$ with $x\in W$. Then there exist $m,n\ge 0$ such that $
k=m-n$ and $\sigma^m(x)=\sigma^n(y).$
Since $W$ is $\sigma$-invariant and $x\in W$, we have $\sigma^m(x)\in W$, and hence
$\sigma^n(y)\in W$.
Because $W$ is $\sigma^{-1}$-invariant, applying this property repeatedly yields $y\in W$.
Thus $W$ is $G(X,\sigma)$-invariant.

Now, since $W$ is $(\sigma,\sigma^{-1})$-invariant, Proposition~2.7 of
\cite{GoncalvesRoyerTascaNonwandering} implies that $(W,\sigma|_W)$ is again a
Deaconu--Renault system. Finally, by definition,
\[
G(W,\sigma|_W)=\{(x,k,y)\in G(X,\sigma): x,y\in W\}=G(X,\sigma)|_W,
\]
and the groupoid operations and topology agree.
\end{proof}

\begin{theorem}\label{thm:graded-justinf-DR}
Let $(X,\sigma)$ be a Deaconu--Renault system such  that $A(G(X,\sigma))$ is infinite-dimensional.
Then the following are equivalent:
\begin{enumerate}
\item $A(G(X,\sigma))$ is graded just infinite;
\item for every proper nonempty closed $(\sigma,\sigma^{-1})$-invariant subset $W\subsetneq X$,
the space $W$ is finite and
$
\dom\bigl((\sigma|_W)^{|W|}\bigr)=\emptyset.$
\end{enumerate}
\end{theorem}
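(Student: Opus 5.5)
The plan is to reduce the statement to Theorem~\ref{graded_justinf_steinb} and then translate its groupoid condition into dynamics using Propositions~\ref{prop:restriction-DR} and \ref{prop:DR.finite}.

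First, Remark~\ref{rem:principal_unit_fibre} shows that $c^{-1}(0)$ is principal, hence strongly effective. Since $A(G(X,\sigma))$ is infinite-dimensional, Lemma~\ref{finite_dim_Steinb} gives that $G(X,\sigma)$ is infinite. So Theorem~\ref{graded_justinf_steinb} applies and says that (1) holds if and only if $|G(X,\sigma)|_W|<\infty$ for every proper closed invariant $W\subseteq G(X,\sigma)^{(0)}=X$. For $W=\emptyset$ this is automatic, so only nonempty $W$ need to be considered.

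Next, by Proposition~\ref{prop:restriction-DR}, the closed $G(X,\sigma)$-invariant subsets of $X$ are exactly the closed $(\sigma,\sigma^{-1})$-invariant subsets. For such $W$, the same proposition gives that $(W,\sigma|_W)$ is a Deaconu--Renault system and $G(X,\sigma)|_W\cong G(W,\sigma|_W)$. Here $W$ is closed in a totally disconnected locally compact Hausdorff space, so it is again of this type and the standing assumption holds.

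Now apply Proposition~\ref{prop:DR.finite} (1)$\Leftrightarrow$(2) to the system $(W,\sigma|_W)$. It says that $G(W,\sigma|_W)$ is finite if and only if $W$ is finite and $\dom((\sigma|_W)^{|W|})=\emptyset$. Chaining these equivalences over all proper nonempty closed $(\sigma,\sigma^{-1})$-invariant $W$ gives (1)$\Leftrightarrow$(2).

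The only point needing care is checking that Proposition~\ref{prop:DR.finite} genuinely applies to the restricted system. In particular, $\dom(\sigma|_W)=W\cap\dom(\sigma)$ must be open in $W$ and $\sigma|_W$ must be a local homeomorphism into $W$. Both are supplied by the cited \cite[Proposition~2.7]{GoncalvesRoyerTascaNonwandering}, as recorded in Proposition~\ref{prop:restriction-DR}. So I expect no real obstacle: the proof is a direct composition of these earlier results.
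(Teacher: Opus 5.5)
Your proposal is correct and is essentially the paper's own proof. You use that $c^{-1}(0)$ is principal to invoke Theorem~\ref{graded_justinf_steinb}, identify closed invariant sets and their reductions via Proposition~\ref{prop:restriction-DR}, and then apply Proposition~\ref{prop:DR.finite} to $(W,\sigma|_W)$; the extra checks you add (infiniteness of $G(X,\sigma)$ via Lemma~\ref{finite_dim_Steinb}, the trivial case $W=\emptyset$, and that $W$ inherits the standing total-disconnectedness assumption) are details the paper leaves implicit.
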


\begin{proof}
Since $c^{-1}(0)$ is strongly effective by Proposition~\ref{prop:principal.fiber}  and since 
$A(G(X,\sigma))$ is infinite-dimensional, Theorem~\ref{graded_justinf_steinb}  implies that $A(G(X,\sigma))$ is graded just infinite
if and only if $
|G(X,\sigma)|_W|<\infty$ 
for every proper nonempty closed $G(X,\sigma)$-invariant subset $W\subsetneq X$.

By Proposition~\ref{prop:restriction-DR}, a closed subset $W\subseteq X$ is
$G(X,\sigma)$-invariant if and only if it is $(\sigma,\sigma^{-1})$-invariant, and in that case $
G(X,\sigma)|_W \cong G(W,\sigma|_W).$ 
Hence, for every proper nonempty closed $(\sigma,\sigma^{-1})$-invariant subset $W\subsetneq X$,
we have
\[
|G(X,\sigma)|_W|<\infty
\quad \text{if, and only if,} \quad
|G(W,\sigma|_W)|<\infty.
\]

Applying Proposition~\ref{prop:DR.finite},
we obtain
\[
|G(W,\sigma|_W)|<\infty
\quad \text{if, and only if, }\quad
W \text{ is finite and } \dom\bigl((\sigma|_W)^{|W|}\bigr)=\emptyset.
\]
Combining these equivalences for all proper nonempty closed invariant subsets $W$ gives the result.
\end{proof}

Next, we prove that under some extra assumptions on the Deaconu-Renault system, the groupoid $G(X,\sigma)$ satisfies property (FIC) and with that we obtain the equivalence of between being graded just infinite and graded simple.

\begin{proposition}\label{prop:DR.FIC}
    Let $(X,\sigma)$ be a Deaconu-Renault system and assume that the points of $X\setminus\dom(\sigma)$ are all isolated (for instance, if $\dom(\sigma)=X$). Then $G(X,\sigma)$ satisfies property (FIC).
\end{proposition}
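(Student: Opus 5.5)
We must show that every nonempty invariant $W\subseteq X$ with $|G(X,\sigma)|_W|<\infty$ is clopen in $X$. The plan is first to pin down the dynamics on such a $W$, and then to show that each of its points is isolated in $X$. A finite subset of a Hausdorff space is closed, and a set consisting of isolated points is open, so this will give clopenness.

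\emph{Step 1: no point of $W$ lies in $\dom(\sigma^n)$ for all $n$.} Since $G|_W$ is finite and contains $W$, the set $W$ is finite. Invariance of $W$ gives $\sigma(W\cap\dom\sigma)\subseteq W$, by the first half of the argument in Proposition~\ref{prop:restriction-DR}, which uses invariance only. Suppose $x\in W$ lies in $\dom(\sigma^n)$ for every $n$. Then the forward orbit of $x$ stays in the finite set $W$, so the pigeonhole argument of Proposition~\ref{prop:DR.finite} yields $y\in W$ and $p>0$ with $\sigma^p(y)=y$. The elements $(y,kp,y)$ for $k\in\zn$ all lie in $G|_W$, which contradicts finiteness.

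Hence for every $x\in W$ there is a largest $m\ge 0$ with $x\in\dom(\sigma^m)$. The point $z:=\sigma^m(x)$ lies in $W$ and satisfies $z\notin\dom(\sigma)$. By hypothesis $z$ is isolated in $X$.

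\emph{Step 2: isolatedness propagates backwards.} This is the key step. Let $x\in\dom(\sigma^m)$ with $\sigma^m(x)=z$ isolated. Since $\sigma^m$ is a local homeomorphism on the open set $\dom(\sigma^m)$, there is an open $V\ni x$ in $\dom(\sigma^m)$ on which $\sigma^m$ is injective. Then $V\cap(\sigma^m)^{-1}(\{z\})=\{x\}$. This set is open, because $\{z\}$ is open and $\sigma^m$ is continuous on the open set $\dom(\sigma^m)$, which is open in $X$. So $\{x\}$ is open in $X$, and every point of $W$ is isolated in $X$. It follows that $W$ is open, being a union of open singletons, and closed, being finite in a Hausdorff space. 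Thus $W$ is clopen, which is Property~(FIC).

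I expect the only delicate point to be Step~1: making sure that finiteness of $G|_W$ really excludes points with infinite forward orbit. The periodic-point argument handles this, and it mirrors the proof of Proposition~\ref{prop:DR.finite}, which can be applied directly to $(W,\sigma|_W)$ once $W$ is known to be closed and $(\sigma,\sigma^{-1})$-invariant via Proposition~\ref{prop:restriction-DR}.
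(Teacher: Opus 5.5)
Your proof is correct and follows essentially the same route as the paper: you rule out points of $W$ lying in every $\dom(\sigma^n)$ because a periodic point would give infinitely many isotropy elements $(y,kp,y)$, and then you pull isolatedness back from $\sigma^m(x)\notin\dom(\sigma)$ using local injectivity of $\sigma^m$. The differences are cosmetic. You use finiteness of $W$ to go straight to the pigeonhole argument, whereas the paper also treats the pairwise-distinct-orbit case separately. You also spell out the backward-isolation step, which the paper states in one line.
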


\begin{proof}
Let $W$ be a non-empty invariant subset of $G(X,\sigma)^{(0)}$. Assume first that there exists $x\in W$ such that $x\in\dom(\sigma^n)$ for all $n\in\nn$.

For each $n\ge 0$ the defining relation of $G(X,\sigma)$ yields an element
\[
(x,n,\sigma^n(x))\in G(X,\sigma).
\]
Because $W$ is invariant and $x\in W$, it follows that $\sigma^n(x)\in W$ for all $n\ge 0$.
Hence
\[
(x,n,\sigma^n(x))\in G(X,\sigma)|_W
\qquad\text{for all }n\ge 0.
\]
If the points $\sigma^n(x)$ are pairwise distinct, then these elements are pairwise distinct,
so $|G(X,\sigma)|_W|=\infty$.

Otherwise, there exist $m\ge 0$ and $p\ge 1$ such that $\sigma^{m+p}(x)=\sigma^m(x)$.
Setting $y:=\sigma^m(x)\in W$, we obtain $\sigma^p(y)=y$, and consequently
\[
(y,\ell p,y)\in G(X,\sigma)
\quad\text{for all }\ell\in\mathbb Z,
\]
yielding infinitely many isotropy elements in $G(X,\sigma)|_W$.
Thus, again $|G(X,\sigma)|_W|=\infty$.

Assume now that $|G(X,\sigma)|_W|<\infty$. Given $x\in W$, there exists $n\in\nn$ such that $\sigma^n(x)\notin\dom(\sigma)$. By hypothesis, $\sigma^n(x)$ is an isolated point. And because $\sigma$ is a local homeomorphism, $x$ is also an isolated point. This means that $W$ consists of a finite set of isolated points, and hence it is clopen.
\end{proof}

\begin{theorem}\label{DR_JI_Simple}
    Let $(X,\sigma)$ be a Deaconu-Renault system. Assume that $A(G(X,\sigma))$ is infinite-dimensional and the points of $X\setminus\dom(\sigma)$ are all isolated. Consider $A(G(X,\sigma))$ with the natural $\mathbb{Z}$-grading. Then the following are equivalent:
    \begin{enumerate}
        \item $A(G(X,\sigma))$ is graded just infinite,
        \item $A(G(X,\sigma))$ is graded simple,
        \item $G(X,\sigma)$ is minimal.
    \end{enumerate}
\end{theorem}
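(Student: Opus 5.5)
The plan is to reduce everything to Theorem~\ref{thm:FIC-justinf-simple} together with the graded ideal correspondence. I will check its three hypotheses for $G(X,\sigma)$ with the degree cocycle $c(x,k,y)=k$.

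First, $c$ is injective on each isotropy group, as observed in Remark~\ref{rem:principal_unit_fibre}. Proposition~\ref{prop:principal.fiber} then gives that $c^{-1}(0)$ is principal, and hence strongly effective. Second, since the points of $X\setminus\dom(\sigma)$ are isolated, Proposition~\ref{prop:DR.FIC} shows that $G(X,\sigma)$ satisfies Property~(FIC). Third, $A(G(X,\sigma))$ is infinite-dimensional by assumption. By the standing assumption, $X$ is totally disconnected, so $G(X,\sigma)$ is an ample Hausdorff \'etale groupoid. Theorem~\ref{thm:FIC-justinf-simple} therefore applies and yields (1)$\Leftrightarrow$(2).

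For (2)$\Leftrightarrow$(3), I use Theorem~\ref{graded_ideals}, which applies because $c^{-1}(0)$ is strongly effective. It says that $U\mapsto A(G|_U)$ is a lattice isomorphism from open invariant subsets of $X$ onto graded ideals of $A(G(X,\sigma))$. Under this isomorphism $\emptyset$ corresponds to $0$ and $X$ to the whole algebra, since $A(G|_X)=A(G)$.

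If $G$ is minimal, the only open invariant sets are $\emptyset$ and $X$. Hence the only graded ideals are $0$ and $A(G)$, and $A(G)$ is graded simple; it is nonzero because it is infinite-dimensional. Conversely, suppose $G$ is not minimal. Then there is a nonempty proper open invariant set $U$, and by injectivity of the correspondence $A(G|_U)$ is a nonzero proper graded ideal, so $A(G)$ is not graded simple.

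I expect no step to be a real obstacle. The only point needing care is definitional: minimality should be read as the absence of nontrivial open (equivalently, closed) invariant subsets of the unit space, which is exactly what the lattice correspondence detects. This is also the sense used at the end of the proof of Theorem~\ref{thm:FIC-justinf-simple}.
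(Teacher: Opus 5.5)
Your proof is correct and follows the paper's argument. Property (FIC) from Proposition~\ref{prop:DR.FIC} and strong effectiveness of $c^{-1}(0)$ feed into Theorem~\ref{thm:FIC-justinf-simple} for (1)$\Leftrightarrow$(2), and the graded ideal correspondence (Theorem~\ref{graded_ideals}) links minimality with graded simplicity. The only difference is that you obtain (2)$\Rightarrow$(3) directly from injectivity of the correspondence, whereas the paper proves (1)$\Rightarrow$(3) by citing Proposition~\ref{justinf_steinb}; that proposition is stated for just infinite rather than graded just infinite algebras, though its argument only uses the graded ideals $A(G|_U)$, so your route is slightly cleaner.
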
 

\begin{proof}
    First note that by Proposition~\ref{prop:DR.FIC}, $G(X,\sigma)$ satisfies (FIC). Thus, the equivalence between (1) and (2) follows from Theorem~\ref{thm:FIC-justinf-simple}. 
    
    If $A(G)$ is graded just infinite, then $G$ is minimal, by Proposition~\ref{justinf_steinb}. Hence, (1) implies (3). 

     Note that, by Proposition~\ref{prop:principal.fiber}, $c^{-1}(0)$ is a principal groupoid. Thus, if $G$ is minimal, then $A(G)$ has no proper graded ideals, by Theorem~\ref{graded_ideals}, and hence $A(G)$ is graded simple. Hence, (3) implies (2).
    
\end{proof}

The result below recovers \cite[Theorem~2.4]{APMlocal} and \cite[Theorem~2.7]{APMlocal}. See \cite[Example~8.4.7]{SSW_2017} or \cite{oegraph} for the definition of the graph groupoid $G_E$ for a directed graph $E$. Note that $G_E$ is a Deaconu-Renault groupoid of the shift map $\sigma$ defined on the boundary path space $\partial E$, so Theorem~\ref{DR_JI_Simple} applies.

\begin{corollary}
\label{cor:LPA-from-FIC}
Let $E$ be a row-finite directed graph and assume that $L_K(E)$ is infinite-dimensional. Then the following are equivalent
\begin{enumerate}
    \item $L_K(E)$ is graded just infinite,
    \item $L_K(E)$ is graded simple,
    \item $E$ is cofinal.
\end{enumerate}
Moreover, if $L_K(E)$ is locally finite then the conditions above are equivalent to just infiniteness of $L_K(E)$.
\end{corollary}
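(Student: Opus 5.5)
The plan is to realize $L_K(E)$ as the Steinberg algebra $A(G_E)$ of the boundary path groupoid, with the standard $\mathbb{Z}$-grading coming from the degree cocycle. Since $G_E$ is the Deaconu--Renault groupoid of the shift $\sigma$ on $\partial E$, we then apply Theorem~\ref{DR_JI_Simple}. Its hypotheses must be checked first. Infinite-dimensionality is assumed. For the second hypothesis, note that $\partial E\setminus\dom(\sigma)$ consists of the boundary paths of length zero, i.e.\ the sinks $v$ regarded as paths. For row-finite $E$, the only boundary path starting at a sink $v$ is $v$ itself. The cylinder set $Z(v)=\{\alpha\in\partial E: s(\alpha)=v\}$ is open, so it equals $\{v\}$ and these points are isolated. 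Theorem~\ref{DR_JI_Simple} then gives (1)$\Leftrightarrow$(2)$\Leftrightarrow$ minimality of $G_E$.

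The remaining step for the first part is to identify minimality of $G_E$ with cofinality of $E$. Recall the orbit relation in $G_E$: two boundary paths are in the same orbit exactly when they are tail equivalent (finite paths ending at the same sink count as tail equivalent). For the direction cofinal $\Rightarrow$ minimal, we take $x\in\partial E$ and a basic cylinder $Z(\mu)$. By cofinality, the range vertex $r(\mu)$ connects to $x$: either some vertex $x_n$ along $x$ is reached by a path $\nu$ from $r(\mu)$, or, if $x$ is finite ending at a sink, that sink is reached. Then $\mu\nu\sigma^n(x)\in Z(\mu)$ lies in the orbit of $x$, so every orbit is dense. For minimal $\Rightarrow$ cofinal, given a vertex $v$ and a boundary path $x$, density of the orbit of $x$ meets $Z(v)$. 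Hence some path $\nu$ from $v$ is followed by a tail of $x$, so $v$ connects to a vertex of $x$. I expect this identification to be the only delicate point, because one must handle sinks and finite boundary paths carefully. Alternatively, it can be cited from the graph groupoid literature, e.g.\ \cite{oegraph} or \cite{SSW_2017}.

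For the ``moreover'' part, assume $L_K(E)$ is locally finite. By Proposition~\ref{prop:locally-finite-DR}, $\partial E$ is finite, so $G_E$ is discrete with finite unit space. Now $c^{-1}(0)$ is principal by Remark~\ref{rem:principal_unit_fibre}, and the grading is a $\mathbb{Z}$-grading, so Proposition~\ref{transitive}, items (i)--(iv), applies. It shows that graded just infiniteness is equivalent to just infiniteness, which links them to (1)--(3). Alternatively, one could cite \cite[Proposition~2.5]{APMlocal} directly.
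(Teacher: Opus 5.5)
Your proposal is correct and follows the paper's proof. You identify $L_K(E)\cong A(G_E)$ as $\mathbb{Z}$-graded algebras, note that the points of $\partial E\setminus\dom(\sigma)$ are the sinks and are isolated, apply Theorem~\ref{DR_JI_Simple}, and get the ``moreover'' part from Proposition~\ref{transitive}; the only difference is that you verify minimality of $G_E$ $\Leftrightarrow$ cofinality of $E$ directly through tail equivalence and cylinder sets, where the paper cites \cite[Proposition~8.3]{NO_19}, and this is valid because you use the version of cofinality that includes finite paths ending at sinks.
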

\begin{proof}
Since $E$ is row-finite the elements of $\partial E$ which are not in the domain of the shift map $\sigma$ are the vertices which are sinks, and they are isolated in $\partial E$. Moreover, by \cite[Proposition~8.3]{NO_19}, $G_E$ is minimal if and only if $E$ is cofinal. We can then apply Theorem~\ref{DR_JI_Simple} with the standard graded isomorphism $L_K(E)\cong A(G_E)$ to obtain the equivalence between (1), (2) and (3). The last part follows from Proposition~\ref{transitive}.
\end{proof}

\subsection{Subshift algebras}

We briefly recall definitions pertinent to subshifts and their associated groupoids, and establish some notation; see
\cite{BCGWSubshift, BCGWCStarSubshift} for further details.

Let $\alf$ be a finite or countable alphabet, and let $\osf\subseteq \alf^{\nn}$ be a one-sided
subshift with shift map $
\sigma\colon \osf\to \osf$ given by $\sigma(x_0x_1x_2\cdots)=x_1x_2x_3\cdots .
$
For $n\in\N$, let $\CL^n(\osf)$ denote the set of words of length $n$ appearing in points of $\osf$,
and set $
\lang=\bigcup_{n\ge 0}\CL^n(\osf),
$
where $\CL^0(\osf)=\{\omega\}$ and $\omega$ denotes the empty word.
For $\alpha\in \lang$, we write
$
Z(\alpha)=\{x\in \osf : x_0x_1\cdots x_{|\alpha|-1}=\alpha\}
$
for the corresponding cylinder set, and
$
F_\alpha=\{y\in \osf : \alpha y\in \osf\}
$
for the follower set of $\alpha$. More generally, for $\alpha,\beta\in\lang$, we define $C(\alpha,\beta)=\{\beta x\in \osf:\alpha x\in\osf\}$.

Let $\TCB$ be the Boolean algebra of subsets of $\osf$ generated by the set
$\{C(\alpha,\beta):\alpha,\beta\in\lang\}$, and let $\HTCB$ be the Stone spectrum of $\TCB$.
As described in \cite[Section~6]{BCGWSubshift}, the shift induces a partial local homeomorphism $
\hsig\colon \dom(\hsig)\to \HTCB,$ 
and hence a Deaconu--Renault groupoid $G(\HTCB,\hsig)$ with canonical cocycle
$c\colon G(\HTCB,\hsig)\to \mathbb Z$, $c(\xi,k,\eta)=k$.
We write
\[
\ualgshift:=A(G(\HTCB,\hsig))
\]
for the associated unital subshift algebra. Note that in \cite{BCGWSubshift}, the algebra $\ualgshift$ is defined as a universal algebra on a set of generators,  but it is shown to be isomorphic to the Steinberg algebra $A(G(\HTCB,\hsig))$ in \cite[Theorem~6.5]{BCGWSubshift}. For subshifts of finite type over finite alphabets, one has $\HTCB=\osf$ and $\hsig=\sigma$.

Recall that the topology on $\HTCB$ is given by the family $\{O_A\}_{A\in \TCB}$, where $O_A=\{\xi\in \HTCB:A\in \xi\}$. The domain of $\hsig$ is $\bigcup_{a\in\alf}O_{Z_a}$. We recall that for each $a\in \alf$, there exists a bijection $\varphih_a:O_{F_a}\to O_{Z_a}$ such that for every $\eta\in O_{Z_a}$, we have that $\hsig(\eta)=\varphih_a^{-1}(\eta)$ (see \cite[Section~6]{BCGWSubshift}). Moreover, in the finite alphabet case, $\osf=\bigcup_{a\in\alf}Z_a$ as a finite union and hence $\dom(\hsig)=\HTCB$.

We start by proving that the only finite-dimensional subshift algebra is the trivial one.

\begin{proposition}\label{No_finite_dim_subshifts}
There are no nontrivial finite-dimensional subshift algebras.  
\end{proposition}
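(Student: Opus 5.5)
Since $\ualgshift=A(G(\HTCB,\hsig))$, Lemma~\ref{finite_dim_Steinb} reduces the claim to a statement about the groupoid: $\ualgshift$ is finite-dimensional if and only if $G(\HTCB,\hsig)$ is finite. Proposition~\ref{prop:DR.finite} then sharpens this to: $\HTCB$ is finite and $\dom(\hsig^{|\HTCB|})=\emptyset$. So the plan is to show that, for any nonempty subshift $\osf$, the Deaconu--Renault system $(\HTCB,\hsig)$ contains a point in $\dom(\hsig^n)$ for every $n$, or equivalently that $G(\HTCB,\hsig)$ has an infinite orbit or a nontrivial (hence infinite) isotropy group. We take ``trivial'' to mean $\osf=\emptyset$, where $\TCB$ is the trivial Boolean algebra, $\HTCB=\emptyset$, and the algebra is $0$.

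The key step is to construct a point of $\HTCB$ that lies in $\dom(\hsig^n)$ for all $n$. Take any $x=x_0x_1x_2\cdots\in\osf$. Let $\xi_x=\{A\in\TCB : x\in A\}$ be the principal ultrafilter at $x$; this is a genuine element of the Stone spectrum because $\TCB$ is a Boolean algebra of subsets of $\osf$. Since $x\in Z_{x_0}$, we have $Z_{x_0}\in\xi_x$, so $\xi_x\in O_{Z_{x_0}}\subseteq\dom(\hsig)$.

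We then need to check that $\hsig(\xi_x)=\xi_{\sigma(x)}$. For this we use the description $\hsig=\varphih_a^{-1}$ on $O_{Z_a}$ from \cite[Section~6]{BCGWSubshift}, where $\varphih_a$ is induced by the map $A\mapsto aA\cap\osf$ sending subsets of $F_a$ to subsets of $Z_a$. Under this map the ultrafilter at $\sigma(x)\in F_{x_0}$ goes to the ultrafilter at $x$. If needed, we verify this on the generators $C(\alpha,\beta)$: the relation $x_0\cdot C(\alpha,\beta)\cap\osf=C(\alpha,x_0\beta)$ shows that $\varphih_{x_0}(\xi_{\sigma(x)})$ and $\xi_x$ contain the same generators, and hence coincide. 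By induction, $\xi_x\in\dom(\hsig^n)$ for every $n\ge0$.

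Consequently $\dom(\hsig^n)\neq\emptyset$ for all $n$. By the pigeonhole argument in the proof of Proposition~\ref{prop:DR.finite}, if $\HTCB$ were finite this would force a periodic point and therefore an isotropy group isomorphic to $\zn$. If $\HTCB$ is infinite, the groupoid is infinite anyway. In either case $G(\HTCB,\hsig)$ is infinite, so $\ualgshift$ is infinite-dimensional by Lemma~\ref{finite_dim_Steinb}.

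The main obstacle is the compatibility $\hsig(\xi_x)=\xi_{\sigma(x)}$, which depends on the precise construction of $\varphih_a$ in \cite{BCGWSubshift}; everything else is formal. If that identification is awkward to establish, a fallback avoids it. Suppose $\dom(\hsig^N)=\emptyset$ for some $N$. Since $\dom(\hsig^N)$ contains $O_{Z_\alpha}$ for every word $\alpha$ of length $N$ (because $\hsig^k(O_{Z_\alpha})\subseteq O_{Z_{\alpha_k\cdots}}$), this would give $O_{Z_\alpha}=\emptyset$, that is $Z_\alpha=\emptyset$, for all $\alpha\in\CL^N(\osf)$. But then $\osf=\emptyset$, since every point of $\osf$ begins with some word of length $N$.
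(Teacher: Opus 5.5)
Your proposal is correct, but it takes a different route from the paper.

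\textbf{The paper's route.} It uses the realization of $\ualgshift$ as a partial skew group ring over the free group on $\alf$ \cite[Theorem~5.21]{BCGWSubshift}. For a point $x=a_1a_2\cdots$, the homogeneous components indexed by the prefixes $a_1\cdots a_n$ each contain a nonzero element. Hence infinitely many components are nonzero, and the algebra is infinite-dimensional.

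\textbf{Your route.} You stay in the Steinberg algebra model. You reduce via Lemma~\ref{finite_dim_Steinb} and Proposition~\ref{prop:DR.finite} to showing that $G(\HTCB,\hsig)$ is infinite. You then exhibit a point lying in every $\dom(\hsig^n)$, using the principal ultrafilters $\xi_x$.

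The step you flag as the main obstacle, $\hsig(\xi_x)=\xi_{\sigma(x)}$, is exactly \cite[Proposition~5.17]{BCGWSubshift}. The paper itself invokes this in the proof of Theorem~\ref{thm:all-components-infinite-iff-X-infinite}. You can cite it directly, so neither the generator-by-generator check nor the fallback is needed. The fallback also leans on unstated details of the construction: that $\hsig(O_{Z_{a\beta}})\subseteq O_{Z_\beta}$, and that $O_A\neq\emptyset$ for nonempty $A\in\TCB$.

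\textbf{A shorter ending.} Once $\xi_x\in\dom(\hsig^n)$ for all $n$, the elements $(\xi_x,n,\xi_{\sigma^n(x)})$, $n\ge 0$, lie in $G(\HTCB,\hsig)$. They are pairwise distinct because their degrees differ, so the groupoid is infinite immediately. This removes the need for the pigeonhole argument and the case split on $|\HTCB|$.

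This shortened version also shows that every fibre $c^{-1}(n)$ is nonempty, i.e. every $\ualgshift_n$ is nonzero. That is precisely the $\zn$-graded counterpart of the paper's free-group argument.

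\textbf{Comparison.} The paper's proof is very short once the partial skew group ring theorem is available. Yours needs only the groupoid model and results already proved in the paper.
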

\begin{proof}
Let $x = a_1 a_2 \ldots$ be an element of $\osf$. The algebra  $\ualgshift$ is isomorphic to a partial skew-group ring by the free group generated by $\alf$ (\cite[Theorem~5.21]{BCGWSubshift}), and as such is graded by this free group. Observe that for each $n \in \N$, there exists a nonzero element in the homogeneous component corresponding to the word $a_1 \ldots a_n$ of the free group. Since there are infinitely many such words, there are infinitely many nonzero homogeneous components and hence the algebra $\ualgshift$ must be infinite-dimensional.
\end{proof}

The following theorem is an analogue of \cite[Theorem~1.4]{APMlocal}. In particular, for infinite-dimensional subshift algebras we see that there is a dichotomy between the algebra being locally finite and all fibres being infinite-dimensional

\begin{theorem}\label{thm:all-components-infinite-iff-X-infinite}
Let $\osf$ be a one-sided subshift. Then the following are equivalent:
\begin{enumerate}
\item $\ualgshift_m$ is infinite-dimensional for every $m\in \mathbb Z$;
\item $\ualgshift_m$ is infinite-dimensional for some $m\in \mathbb Z$;
\item $\osf$ is infinite.
\end{enumerate}
\end{theorem}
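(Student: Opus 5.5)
Since the grading on $\ualgshift$ is induced by the degree cocycle $c$ on $G(\HTCB,\hsig)$, Proposition~\ref{finite_fibers} reduces everything to counting fibres: $\ualgshift_m$ is infinite-dimensional if and only if $c^{-1}(m)$ is infinite. The implication (1)$\Rightarrow$(2) is trivial. I would prove (2)$\Rightarrow$(3) by contraposition and (3)$\Rightarrow$(1) directly.

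\emph{(2)$\Rightarrow$(3).} Suppose $\osf$ is finite. Then $\lang$ is finite: every word has length at most some bound plus finitely many prefixes. In fact a finite subshift consists of eventually periodic points and has finitely many words of each length. More to the point, the sets $C(\alpha,\beta)$ are subsets of the finite set $\osf$. Hence $\TCB$ is a Boolean algebra of subsets of a finite set, so it is finite, and $\HTCB$ is finite; indeed $\HTCB\cong\osf$ via principal ultrafilters, since singletons lie in $\TCB$ once the Boolean algebra separates points, and cylinders $Z(\alpha)=C(\omega,\alpha)$ do separate points.

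With $\HTCB$ finite, $c^{-1}(m)\subseteq \HTCB\times\{m\}\times\HTCB$ is finite for every $m$. So every component is finite-dimensional, contradicting (2).

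\emph{(3)$\Rightarrow$(1).} Assume $\osf$ is infinite. I would exhibit infinitely many elements of each $c^{-1}(m)$; since $c^{-1}(-m)=c^{-1}(m)^{-1}$, it suffices to take $m\ge0$.

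\begin{itemize}
\item \emph{Case $m=0$.} $\HTCB\supseteq$ the image of $\osf$ under $x\mapsto$ its principal ultrafilter, which is injective because cylinders separate points. So the unit space is infinite.
\item \emph{Case $m>0$.} For each $x\in\osf$ the ultrafilter $\xi_x$ lies in $\dom(\hsig^m)$, since every point lies in some $Z_a$. Moreover $\hsig(\xi_x)=\xi_{\sigma(x)}$, because $\hsig$ is inverse to $\varphih_a$, which maps $\xi_{y}$ to $\xi_{ay}$. Thus $(\xi_x,m,\xi_{\sigma^m(x)})\in c^{-1}(m)$ for all $x\in\osf$, and these elements are pairwise distinct as $x$ varies. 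Hence $c^{-1}(m)$ is infinite.
\end{itemize}

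Equivalently, one can invoke Proposition~\ref{prop:all.fibers.infinite.dimensional}: since $\HTCB$ contains a copy of the infinite set $\osf$ inside $\dom(\hsig^n)$ for every $n$, no $\dom\hsig^n$ is finite, and the criterion there holds vacuously.

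\emph{Main obstacle.} The delicate point is justifying that principal ultrafilters $\xi_x=\{A\in\TCB: x\in A\}$ belong to $\dom(\hsig^n)$ and that $\hsig$ acts on them as $\sigma$. This should follow from the description of $\varphih_a$ in \cite[Section~6]{BCGWSubshift}. Alternatively, I would only need that $\xi_x\in O_{Z(x_0\cdots x_{n-1})}\subseteq\dom(\hsig^n)$, which requires checking that $O_{Z(\alpha)}$ is exactly the domain of $\hsig^{|\alpha|}$ restricted there. For the finite direction, the point to check is that $\TCB$ consists of subsets of $\osf$, hence is finite; the Stone space of a finite Boolean algebra is then finite.
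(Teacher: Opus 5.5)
Your proposal is correct and follows the paper's proof. For the contrapositive, the paper likewise uses only that $\TCB\subseteq\mathcal P(\osf)$ is finite, so $\HTCB$ and every fibre $c^{-1}(m)$ are finite. For (3)$\Rightarrow$(1), it uses the injective map $x\mapsto\xi_x$ with $\hsig(\xi_x)=\xi_{\sigma(x)}$, quoted from \cite[Proposition~5.17]{BCGWSubshift}, which settles your ``main obstacle'', and then applies Proposition~\ref{prop:all.fibers.infinite.dimensional}.

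One small slip: your aside that $\lang$ is finite when $\osf$ is finite is false, since already $\osf=\{0^\infty\}$ has words of every length. Nothing in your argument depends on it.
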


\begin{proof}
(1) $\Rightarrow$ (2) is trivial.

(2) $\Rightarrow$ (3) We prove the contrapositive. Assume that $\osf$ is finite.
Since $\TCB\subseteq \mathcal P(\osf)$, we have that $\TCB$ is finite. Hence its Stone spectrum
$\HTCB$ is finite. It follows that for every $m\in\mathbb Z$, $
c^{-1}(m)\subseteq \HTCB\times\{m\}\times \HTCB$
is finite. By Proposition~\ref{finite_fibers}, each $\ualgshift_m$ is finite-dimensional.

(3) $\Rightarrow$ (1) Assume that $\osf$ is infinite. For each $x\in \osf$, define
\[
\xi_x:=\{A\in \TCB : x\in A\}.
\]
By \cite[Proposition~5.17]{BCGWSubshift}, the map
\[
\iota_\osf\colon \osf\to \HTCB,\qquad x\mapsto \xi_x
\]
is injective and satisfies $\hsig(\xi_x)=\xi_{\sigma(x)}$.
Thus $\iota_\osf(\osf)\subseteq \HTCB$ is infinite, and $\xi_x\in\dom(\hsig)$ for all
$x\in\osf$. Hence $\iota_\osf(\osf)\subseteq \dom(\widehat\sigma^m)$ for every $m\in\nn$.
By Proposition~\ref{prop:all.fibers.infinite.dimensional}, $\ualgshift_m$ is
infinite-dimensional for every $m\in\mathbb Z$.

\end{proof}

We consider the condition of a cycle having an exit in the case of subshifts, and we shall see that it is not equivalent to every fibre being infinite dimensional as it is in the graph case. Below we recall from \cite{MR4721165} the definition of a cycle. 

\begin{definition}\label{cyclexit}
Let $\osf$ be a subshift, $\alpha\in \lang\setminus\{\omega\}$, and $\emptyset\neq A\in\TCB$.
We say that $(A,\alpha)$ is a \emph{cycle} if
$A \subseteq r(A,\alpha).$ A cycle $(A,\alpha)$ is said to have an \emph{exit}
if $A\neq\{\alpha^\infty\}$. Otherwise, we say that $(A,\alpha)$ is a
\emph{cycle without exit}.
\end{definition}

\begin{remark}\label{jacare} It is observed in \cite{MR4721165} that if $(A,\alpha)$ is a cycle, then either
$A=\{\alpha^\infty\}$ or $A$ is infinite. Moreover, by \cite[Lemma~3.7]{MR4721165}, $\alpha^\infty\in \osf$.
\end{remark}

We have the following corollary to Theorem~\ref{thm:all-components-infinite-iff-X-infinite}.

\begin{corollary}\label{cor:cycle.implies.infinite}
    If $\osf$ admits a cycle with an exit, then $\ualgshift_m$ is infinite-dimensional for all $m\in\zn$.
\end{corollary}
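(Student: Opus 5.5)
By Theorem~\ref{thm:all-components-infinite-iff-X-infinite}, it suffices to show that $\osf$ is infinite. We then get from (3) to (1) of that theorem, so every homogeneous component $\ualgshift_m$ is infinite-dimensional. The whole argument therefore reduces to a cardinality statement about $\osf$.

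Let $(A,\alpha)$ be a cycle with an exit, so $\emptyset\neq A\in\TCB$ and $A\neq\{\alpha^\infty\}$. By Remark~\ref{jacare}, a cycle satisfies either $A=\{\alpha^\infty\}$ or $A$ is infinite. The first alternative is excluded by the exit hypothesis, so $A$ is infinite. Since $A\in\TCB$ and $\TCB$ is a Boolean algebra of subsets of $\osf$, we have $A\subseteq\osf$. Hence $\osf$ is infinite, and Theorem~\ref{thm:all-components-infinite-iff-X-infinite} gives the conclusion.

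The proof has no real obstacle, since the dichotomy in Remark~\ref{jacare} carries the weight. The only point to check is that elements of $\TCB$ are genuinely subsets of $\osf$, and not of the Stone spectrum $\HTCB$. This holds because $\TCB$ is generated by the sets $C(\alpha,\beta)\subseteq\osf$.
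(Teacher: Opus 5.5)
Your proposal is correct and follows the paper's own proof. The dichotomy of Remark~\ref{jacare} together with the exit hypothesis forces $A$ to be infinite, the inclusion $A\subseteq\osf$ then makes $\osf$ infinite, and (3)$\Rightarrow$(1) of Theorem~\ref{thm:all-components-infinite-iff-X-infinite} finishes the argument.
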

\begin{proof}
Let $(A,\alpha)$ be a cycle with an exit. By Definition~\ref{cyclexit} and Remark~\ref{jacare}, $A$ is infinite. Since $A\subseteq \osf$, we have that $\osf$ is infinite. The result follows from Theorem~\ref{thm:all-components-infinite-iff-X-infinite}.

\end{proof}

\begin{remark}\label{Inf_implies_nocycles_false}

The converse of Corollary~\ref{cor:cycle.implies.infinite}
fails, even for subshifts over finite alphabets. Indeed, if $\osf$ is an infinite
aperiodic subshift, then, by Remark~\ref{jacare}, $\osf$ has no cycles in the sense of
Definition~\ref{cyclexit}. A concrete family of examples is given by one-sided Sturmian subshifts, see \cite{Brix2023Sturmian}.

\end{remark}

\begin{theorem}\label{capivara}
    Let $\osf$ be a nonempty subshift. The following are equivalent:
    \begin{enumerate}
        \item the subshift algebra $\ualgshift$ is locally finite,
        \item there exists $m\in\zn$ such that $\ualgshift_m$ is finite dimensional,
        \item $\osf$ is finite.
    \end{enumerate}
    In particular $\mathcal L^1(\osf)$ is finite.
\end{theorem}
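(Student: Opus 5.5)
The plan is to derive everything from Theorem~\ref{thm:all-components-infinite-iff-X-infinite}, together with Theorem~\ref{Loc_finite_Steinb} and Proposition~\ref{finite_fibers}. We prove the cycle (1) $\Rightarrow$ (2) $\Rightarrow$ (3) $\Rightarrow$ (1) and then deduce the final claim about $\mathcal L^1(\osf)$.

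(1) $\Rightarrow$ (2) is immediate: local finiteness says every homogeneous component is finite dimensional, and in particular $\ualgshift_0$ is.

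(2) $\Rightarrow$ (3) is the contrapositive of (3) $\Rightarrow$ (1) in Theorem~\ref{thm:all-components-infinite-iff-X-infinite}. If $\osf$ were infinite, then every $\ualgshift_m$ would be infinite-dimensional. This contradicts the existence of a finite-dimensional component.

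(3) $\Rightarrow$ (1) repeats the argument from the proof of (2) $\Rightarrow$ (3) in Theorem~\ref{thm:all-components-infinite-iff-X-infinite}.
\begin{itemize}
\item If $\osf$ is finite, then $\TCB\subseteq\mathcal P(\osf)$ is a finite Boolean algebra, so its Stone spectrum $\HTCB$ is finite.
\item Each fibre $c^{-1}(m)\subseteq \HTCB\times\{m\}\times\HTCB$ is therefore finite.
\item Theorem~\ref{Loc_finite_Steinb} (equivalently Proposition~\ref{finite_fibers}) then gives that $\ualgshift$ is locally finite for the $\mathbb Z$-grading.
\end{itemize}

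For the final sentence, note that by (3) the subshift $\osf$ is finite under any of the equivalent conditions. Every letter in $\mathcal L^1(\osf)$ is, by definition, the first coordinate $x_0$ of some $x\in\osf$. Hence $|\mathcal L^1(\osf)|\le|\osf|<\infty$.

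No step poses a real obstacle. The only point to check is that a finite Boolean algebra has a finite Stone spectrum. This holds because ultrafilters on a finite Boolean algebra correspond to its atoms, of which there are at most $|\osf|$; this point was already used in the earlier theorem. Nonemptiness of $\osf$ is not needed for the equivalences. It only guarantees that the statement is not vacuous.
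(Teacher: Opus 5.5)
Your proof is correct and is essentially the paper's argument: the paper also obtains all three equivalences directly from Theorem~\ref{thm:all-components-infinite-iff-X-infinite}, of which you simply unpack the two relevant implications. The paper additionally cites Proposition~\ref{No_finite_dim_subshifts} for infinite-dimensionality, which, as you observe, is not logically needed; your justification of the final claim via $|\mathcal L^1(\osf)|\le|\osf|$ fills in a step the paper leaves implicit (by shift-invariance, every letter occurring in a point of $\osf$ is the first letter of some point of $\osf$).
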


\begin{proof}
    By Proposition~\ref{No_finite_dim_subshifts}, $\ualgshift$ is always infinite-dimensional. The result then follows immediately from Theorem~\ref{thm:all-components-infinite-iff-X-infinite}.
\end{proof}

We now focus on the properties of graded simplicity and graded just infiniteness. First, we recall a definition from \cite{MishaRuy}.

\begin{definition}\cite[Definition~13.20]{MishaRuy}
    Given $x\in\osf$ and $B$ a finite subset of $\lang$, the \emph{cost} of reaching $x$ from $B$ is
    \[\operatorname{Cost}(B,x)=\inf\{|\alpha|+|\gamma|:\alpha,\gamma\in\lang\text{ and }x\in C(\beta\gamma,\alpha)\text{ for all }\beta\in B\},\]
    with the convention that $\inf\emptyset=\infty$. We say that $\osf$ is \emph{hyper cofinal} if $\sup_{x\in\osf}\operatorname{Cost}(B,x)<\infty$ for all finite $B\scj\lang$ such that $F_B\neq\emptyset$. 
\end{definition}

\begin{corollary}\label{cor:graded-justinf-subshift}
Let $\osf$ be a non-empty one-sided subshift over a finite alphabet with associated groupoid  $G(\widehat U,\widehat\sigma)$. Then the following are equivalent:
\begin{enumerate}
\item $\ualgshift$ is graded just infinite;
\item $G(\widehat U,\widehat\sigma)$ is minimal;
\item $\ualgshift$ is graded simple;
\item $\osf$ is hyper-cofinal.
\end{enumerate}
Moreover, if $\osf$ is finite, the above conditions are also equivalent to:
\begin{enumerate}\setcounter{enumi}{4}
    \item $\ualgshift$ is just infinite;
    \item $G(\widehat U,\widehat\sigma)$ is transitive.
\end{enumerate}
\end{corollary}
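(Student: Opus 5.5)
The plan is to reduce everything to Theorem~\ref{DR_JI_Simple} applied to the Deaconu--Renault system $(\HTCB,\hsig)$, and then to import the hyper-cofinality criterion from the literature.

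First, $\ualgshift$ is infinite-dimensional by Proposition~\ref{No_finite_dim_subshifts}. Since the alphabet is finite, $\dom(\hsig)=\HTCB$, as recalled above. Hence $X\setminus\dom(\hsig)=\emptyset$, and the isolated-points hypothesis holds vacuously. We also have $\ualgshift=A(G(\HTCB,\hsig))$ with its natural $\mathbb Z$-grading coming from the degree cocycle, and $c^{-1}(0)$ is principal by Remark~\ref{rem:principal_unit_fibre}. Theorem~\ref{DR_JI_Simple} then gives (1)$\Leftrightarrow$(2)$\Leftrightarrow$(3) directly.

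For (2)$\Leftrightarrow$(4) I would cite the characterization of minimality of $G(\HTCB,\hsig)$ in terms of hyper-cofinality from \cite{MishaRuy}; this is where the definition of hyper cofinal comes from. Alternatively, I would cite the simplicity results for subshift algebras in \cite{BoavaCastroGoncalvesVanWyk2025CCM}, which say that graded simplicity of $\ualgshift$ is equivalent to hyper-cofinality. The main obstacle is matching conventions: \cite{MishaRuy} works with the spectrum of a partial action or semigroup and the sets $C(\alpha,\beta)$. I would check that their groupoid is isomorphic to $G(\HTCB,\hsig)$, via \cite[Theorem~5.21]{BCGWSubshift} and the partial skew-ring picture. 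Minimality is then a property of the unit space dynamics that is preserved under that isomorphism.

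For the finite case, suppose $\osf$ is finite. By Theorem~\ref{capivara}, $\ualgshift$ is locally finite. With $c^{-1}(0)$ strongly effective, infinite dimensionality, and the $\mathbb Z$-grading, Proposition~\ref{transitive} gives (1)$\Leftrightarrow$(3)$\Leftrightarrow$(5)$\Leftrightarrow$(6). In this case transitivity and minimality also coincide directly: the unit space is finite and discrete by Remark~\ref{discrete_unitspace}, so every orbit is clopen, and a minimal groupoid has a single orbit.
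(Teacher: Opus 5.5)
Your proposal is correct and follows essentially the same route as the paper. You use Theorem~\ref{DR_JI_Simple} (with $\dom(\hsig)=\HTCB$ and Proposition~\ref{No_finite_dim_subshifts}) for (1)--(3), and you import (2)$\Leftrightarrow$(4) from the literature; the paper cites \cite[Proposition~13.21]{MishaRuy} together with \cite[Proposition~5.14]{BoavaCastroGoncalvesVanWyk2025CCM}, the latter handling exactly the convention-matching you flag as the main obstacle. In the finite case you use local finiteness from Theorem~\ref{capivara} plus Proposition~\ref{transitive}, whose item (iv) is just \cite[Proposition~2.5]{APMlocal}, which the paper cites directly for (5).
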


\begin{proof}
Because $\osf$ is non-empty, the algebra $\ualgshift$ is infinite dimensional by Proposition~\ref{No_finite_dim_subshifts}. Moreover, since the alphabet is finite, we have that $\dom(\widehat\sigma)=\widehat U$, so $
\widehat\sigma\colon \widehat U\to \widehat U $
is a globally defined local homeomorphism. Hence,
Theorem~\ref{DR_JI_Simple} applies, giving the equivalence of
\textnormal{(1)}, \textnormal{(2)}, and \textnormal{(3)}.
The equivalence of \textnormal{(2)} and \textnormal{(4)} follows from \cite[Proposition~13.21]{MishaRuy} and \cite[Proposition~5.14]{BoavaCastroGoncalvesVanWyk2025CCM}.

In the case that $\osf$ is finite, $\ualgshift$ is locally finite by Theorem~\ref{capivara}. The equivalence between \textnormal{(1)} and \textnormal{(5)} follows from \cite[Proposition~2.5]{APMlocal}. The equivalence between \textnormal{(6)} and the other items follows from Proposition~\ref{transitive}.
\end{proof}

\begin{example}
    Let $\osf=\{0^{\infty},10^{\infty},20^{\infty}\}$. Since $\osf$ is finite, $\ualgshift$ is locally  finite. Moreover, we can see that $\HTCB\cong X$ and $G(\widehat U,\widehat\sigma)$ is transitive. Hence, $\ualgshift$ satisfies the equivalent statements of Corollary~\ref{cor:graded-justinf-subshift}. 
\end{example}

\begin{example}
    Let $\osf=\{0^{\infty},10^{\infty},20^{\infty}, 3^\infty\}$. In this case $\ualgshift$ is locally  finite, but it does not satisfy any of the properties given by Corollary~\ref{cor:graded-justinf-subshift}.
\end{example}

If the alphabet is infinite, we can have $\ualgshift$ being graded just infinite, but not graded simple as we shall see in the next example.

\begin{example}\label{ex:chain-markov}
Let
\[
x^{(n)}:=n(n-1)\cdots 210^\infty \qquad (n\in \mathbb N_0),
\]
and set
\[
\osf:=\{x^{(n)}:n\in \mathbb N_0\}.
\]
Then $\osf$ is a countable row-finite Markov subshift over the alphabet $\mathbb N_0$.

For every $n\ge 0$, the cylinder
$
Z\bigl(n\bigr)=\{x^{(n)}\}
$
is a singleton. In particular, all singletons belong to $\TCB$. On the other hand, in this example
every nonempty set $C(\alpha,\beta)$ is either a singleton or all of $\osf$, so $\TCB$ is precisely
the Boolean algebra of finite and cofinite subsets of $\osf$. Hence its Stone spectrum is the
one-point compactification of $\osf$:
\[
\HTCB=\osf\sqcup\{\infty\},
\]
where $\infty$ denotes the nonprincipal ultrafilter of cofinite subsets of $\osf$.

The partial map $\hsig$ is defined exactly on $\osf$. More precisely,
$\dom(\hsig)=\osf$, because each point $x^{(n)}$ contains a one-letter cylinder, while
$\infty$ contains none. Moreover,
\[
\hsig(x^{(0)})=x^{(0)},\qquad \hsig(x^{(n)})=x^{(n-1)} \quad (n\ge 1),
\]
and $\hsig(\infty)$ is not defined. Thus $\osf$ is an open invariant subset of $\HTCB$, while
$\{\infty\}$ is a closed invariant subset.

We claim that the only open invariant subsets of $\HTCB$ are $\emptyset$, $\osf$, and $\HTCB$.
Indeed, all points of $\osf$ lie in the same $G(\HTCB,\hsig)$-orbit, since
\[
\hsig^n(x^{(n)})=x^{(0)}=\hsig^m(x^{(m)})
\qquad \text{for all }m,n\ge 0.
\]
Hence any open invariant subset meeting $\osf$ must contain all of $\osf$. Since $\{\infty\}$ is not
open in the one-point compactification, the claim follows.

Now $c^{-1}(0)$ is principal for the canonical cocycle
$c\colon G(\HTCB,\hsig)\to \mathbb Z$, so by the graded ideal theorem, graded ideals of
$
\ualgshift$
correspond to open invariant subsets of $\HTCB$. Therefore the only graded ideals of $\ualgshift$ are
\[
0,\qquad \algshift:=A(G(\HTCB,\hsig)|_{\osf}),\qquad \ualgshift.
\]
In particular, $\algshift$ is the unique nonzero proper graded ideal of $\ualgshift$.
Moreover,
\[
\ualgshift/\algshift \cong A(G(\HTCB,\hsig)|_{\{\infty\}})\cong K,
\]
since the reduction to $\{\infty\}$ is the trivial one-point groupoid. Thus $\ualgshift$ is not
graded simple, but every nonzero proper graded ideal has finite-codimensional quotient. It follows that $\ualgshift$ is
graded just infinite but not graded simple.
\end{example}

\section{Declarations}

\subsection*{Ethical Approval:} This declaration is not applicable.

\subsection*{Conflicts of interests/Competing interests:} We have no conflicts of interests/com\-peting interests to disclose.

\subsection*{Authors' contributions:} 
All authors contributed equally to this work. 

\subsection*{Funding:} 
Giuliano Boava was partially supported by Funda\c{c}\~ao de Amparo \`a Pesquisa e Inova\c{c}\~ao do Estado de Santa Catarina (FAPESC).

G. G. De Castro was partially supported by Conselho Nacional de Desenvolvimento Cient\'ifico e Tecnol\'ogico (CNPq) - Brazil, and Funda\c{c}\~ao de Amparo \`a Pesquisa e Inova\c{c}\~ao do Estado de Santa Catarina (FAPESC). 

D. Gon\c{c}alves was partially supported by Conselho Nacional de Desenvolvimento Cient\'i\-fico e Tecnol\'ogico (CNPq) - Brazil, and Funda\c{c}\~ao de Amparo \`a Pesquisa e Inova\c{c}\~ao do Estado de Santa Catarina (FAPESC). 

\subsection*{Data Availability and materials:} Data sharing not applicable to this article as no datasets were generated or analysed during the current study.

\bibliographystyle{abbrv}
\bibliography{ref}

\end{document}